\author{Juhan Aru, Aleksandra Korzhenkova}
\date{\vspace{-5ex}}
\title{The~Spherical~model and the large-$N$ limit of the Spin~\texorpdfstring{$O(N)$}{O(N)}~model via the Gaussian free field}
\begin{document}
\maketitle

\pagenumbering{arabic}

\begin{abstract}

We revisit the relation between the spherical model of Berlin–Kac and the spin $O(N)$ model in the limit $N \to \infty$, and explain how they are connected via the discrete Gaussian free field (GFF).

Using probabilistic limit theorems and concentration of measure, we prove that the infinite-volume limit of the spherical model on a $d$-dimensional torus is a massive GFF in the high-temperature regime, a standard GFF at the critical temperature, and a standard GFF plus a Rademacher random constant in the low-temperature regime. The proof at the critical temperature appears to be new and relies on a fine analysis of the zero-average Green's function on the torus.

We study the spin $O(N)$ model in the double limit of spin dimensionality and torus size. Sending $N \to \infty$ first, and then the torus size to infinity, we show that the different spin coordinates become i.i.d. fields, distributed as a massive GFF in the high-temperature regime, a standard GFF at the critical temperature, and a standard GFF plus a Gaussian random constant in the low-temperature regime.

In particular, although the limiting free energies per site of the two models agree at all temperatures, their finite-dimensional laws still differ in terms of their zero modes in the low-temperature regime.
\end{abstract}

\section{Introduction}
\label{sec:intro}

\subsection{Spherical model}
Let $\T_n^d$ denote the $d$-dimensional discrete torus of side length $n$. To each vertex $x \in \T_n^d$, we associate a continuous ``spin'' $\theta_x \in \R$ in such a way that the full configuration $\theta = (\theta_x)_{x \in \T^d_n}$ lies on the $(n^d-1)$-dimensional sphere of radius $\sqrt{n^d}$, that is, $\ntwo{\theta}^2 \coloneqq \sum_x \theta_x^2 = n^d$. We say that $\theta$ is a configuration of the \emph{spherical model} on $\T^d_n$ at inverse temperature $\beta \geq 0$ if 
\begin{align*}
    \theta \sim \nu_{\T^d_n, \beta}[\d \theta] = \frac{1}{Z_{\T^d_n,\beta}} \exp \bigg( \frac{\beta}{2} \sum_{x \sim y} \theta_x \theta_y\bigg) \mathrm{Unif}_{\sqrt{n^d} \Ss^{n^d-1}}[\d \theta],
\end{align*}
where $x\sim y$ denotes the ordered pairs of neighboring vertices. 
The spherical model was first introduced by Berlin and Kac \cite{SpherModel} in 1952 as a simplification of the Ising model that still exhibits a phase transition in dimensions $d \geq 3$, but allows for an analytic treatment of the free energy in the infinite-volume limit. In their original work, Berlin and Kac used variations of the steepest-descent method to compute the limiting free energy, identify the phase transition in the three-dimensional lattice case, and argue that the limiting distribution of the model below the critical temperature is non-Gaussian. In a short note, Molchanov and Sudarev \cite{MS75} claimed that this method yields further results on the limiting distribution of the spherical model, but the proofs never appeared. In this article, we rigorously prove a subset of these results using probabilistic tools (cf. Theorem \ref{thm:spherical}), which we believe also helps clarify the conceptual picture.

An alternative approach, which implicitly leads to the results of our theorem in \cite{BD87} under unverified technical assumptions, is the so-called \emph{mean-spherical approach}, in which one adds a mass term ($s \sum_x \theta_x^2$) to the Hamiltonian rather than conditioning the spin configuration. The parameter $s$ is then chosen such that the average value of $\ntwo{\theta}^2$ equals $n^d$. This method was proposed by Lewis and Wannier \cite{LW52} shortly after the original work \cite{SpherModel}. They observed that various quantities, such as correlations and the free energy, are significantly simpler to compute in this formulation. While the method works well in the high-temperature regime, it was soon realized \cite{LW53_corr, Lax55} that it cannot be directly applied in the zero-magnetic-field setting below the critical temperature, as discrepancies with the original spherical model arise. However, in the presence of a nonzero external magnetic field $h \neq 0$, it was shown \cite{YW65} that the thermodynamic limits of the spherical and mean-spherical models agree, and that the zero-magnetic-field case of the spherical model can be recovered by first taking the thermodynamic limit and then sending $h \rightarrow 0$. Since then, this method has appeared in several works \cite{KT77, Shcherbina1988, BD87}, although none seem to provide a fully rigorous proof of the critical and low-temperature regimes for the zero-magnetic-field case.

Finally, the low-temperature regime of the spherical model can be viewed as a toy model for condensation phenomena. In this broader context, \cite{Lukkarinen2020} investigates a complexified version of the spherical model below the critical temperature. The author establishes convergence of local correlation functions, which, combined with tightness, implies local convergence in law in the aforementioned regime. While this proof also applies to the usual spherical model, it does not suffice to prove convergence at the critical temperature.

Let us now state our main result about the limiting distribution of the spherical model and explain the reasoning leading to the proof.
\begin{theorem}[Infinite-volume limit of the spherical model]
\label{thm:spherical}
    Let $d\geq 2$. The spherical model on $\T^d_n$ at inverse temperature $\beta > 0$, $\theta = (\theta_x)_{x \in \T^d_n}$, converges in law, uniformly over compact subsets of $\Z^d$ (viewed as subsets of $\T^d_n$ for each $n$), as $n \rightarrow \infty$ to: 
    \begin{enumerate}
        \item $\beta < \beta_c$: a massive Gaussian free field (GFF) on $\Z^d$ scaled by $1/\sqrt{\beta}$ with the mass $m^2> 0$ depending on $\beta$ and $d$ in a specific way;
        \item $\beta = \beta_c$: a GFF on $\Z^d$ scaled by $1/\sqrt{\beta}$;
        \item $\beta > \beta_c$: a GFF on $\Z^d$ scaled by $1/\sqrt{\beta}$ plus an independent constant random drift $\sqrt{\frac{\beta - \beta_c}{\beta}} X$ with $X$ being a Rademacher random variable.
    \end{enumerate}
    Here, $\beta_c = G_{\Z^d}(0,0)$. Furthermore, all local correlations of spins converge.
\end{theorem}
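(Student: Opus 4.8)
The plan is to realise the spherical measure $\nu_{\T^d_n,\beta}$ as a conditioned massive Gaussian free field and then exploit concentration of measure. The cornerstone is the following \emph{mean-spherical identity}: for \emph{every} $m^2>0$, if $\phi\sim\mathcal N\big(0,\tfrac1\beta(-\Delta_{\T^d_n}+m^2)^{-1}\big)$ is a massive discrete GFF on $\T^d_n$, then the law of $\phi$ conditioned on $\{\ntwo{\phi}^2=n^d\}$ equals $\nu_{\T^d_n,\beta}$. This is a short radial computation: writing $\phi=R\,\omega$ with $R=\ntwo{\phi}$, $\omega=\phi/\ntwo{\phi}$, the conditional density of $\omega$ given $R^2=n^d$ is proportional to $\exp\big(-\tfrac{\beta n^d}{2}\langle\omega,(-\Delta+m^2)\omega\rangle\big)$, and on the sphere $\{\ntwo{\phi}^2=n^d\}$ one has $\tfrac{\beta}{2}\langle\phi,(-\Delta+m^2)\phi\rangle=-\tfrac{\beta}{2}\sum_{x\sim y}\phi_x\phi_y+\mathrm{const}$. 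Thus the whole problem reduces to understanding a conditioned massive GFF for a cleverly chosen mass $m_n^2$.

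Next I would fix $m_n^2=m_n^2(\beta)>0$ by the mean constraint $\mathbb E\,\ntwo{\phi}^2=n^d$, i.e.\ $\tfrac1{\beta n^d}\sum_k(\mu_k+m_n^2)^{-1}=1$, where $\mu_k=2\sum_{j=1}^d(1-\cos(2\pi k_j/n))$ are the eigenvalues of $-\Delta_{\T^d_n}$, and analyse its asymptotics by comparing this Fourier sum with $\beta_c:=\int_{[-\pi,\pi]^d}(2\pi)^{-d}\,\hat\mu(\xi)^{-1}\,d\xi$ (which is $+\infty$ for $d\le2$), $\hat\mu(\xi)=2\sum_j(1-\cos\xi_j)$. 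One gets three regimes: for $\beta<\beta_c$, $m_n^2\to m_\star^2>0$ characterised by $\int(2\pi)^{-d}(\hat\mu+m_\star^2)^{-1}=\beta$; for $\beta\ge\beta_c$ (hence $d\ge3$), $m_n^2\to0$. The delicate regime is $\beta=\beta_c$: separating the zero mode, $\tfrac1{\beta_c n^dm_n^2}+\tfrac1{\beta_c n^d}\sum_{k\ne0}(\mu_k+m_n^2)^{-1}=1$, one must prove simultaneously $m_n^2\to0$ \emph{and} $n^dm_n^2\to\infty$ (so that the zero-mode variance $(\beta_c n^dm_n^2)^{-1}$ vanishes), which amounts to quantifying how fast the zero-average torus Green's function $\tfrac1{n^d}\sum_{k\ne0}(\mu_k+m_n^2)^{-1}$ approaches its whole-space limit $\beta_c$. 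This requires fine control of the discrete zero-average Green's function on the torus -- quantitative Riemann-sum estimates for $1/\hat\mu$ near $\xi=0$, where $\hat\mu$ vanishes quadratically -- and, together with the conditioning step below, I expect it to be the main obstacle; it is also where the argument goes genuinely beyond the heuristic mean-spherical computations.

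With $m_n^2$ fixed, decompose $\phi=\bar\phi\,\mathbf 1+\phi^0$ into its spatial mean $\bar\phi$ and its zero-average part $\phi^0$, independent centred Gaussians with $\ntwo{\phi}^2=n^d\bar\phi^2+\ntwo{\phi^0}^2$. A variance computation gives $\mathrm{Var}\big(\ntwo{\phi^0}^2/n^d\big)=\tfrac2{\beta^2n^{2d}}\sum_{k\ne0}(\mu_k+m_n^2)^{-2}$, which tends to $0$ in all cases (immediate in the massive regime; for $d\ge3$ at and below criticality it uses $\sum_{k\ne0}\mu_k^{-2}=o(n^{2d})$), so $\ntwo{\phi^0}^2/n^d\to c_\star$ in probability, with $c_\star=1$ for $\beta\le\beta_c$ and $c_\star=\beta_c/\beta$ for $\beta>\beta_c$. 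Hence under the conditioning $\{\ntwo{\phi}^2=n^d\}$ one has $\bar\phi^2\to a_\star:=1-c_\star$: this forces $\bar\phi\to0$ for $\beta\le\beta_c$, and pins $|\bar\phi|\to\sqrt{(\beta-\beta_c)/\beta}$ for $\beta>\beta_c$, the sign being $\pm1$ with equal probability by the $\phi\mapsto-\phi$ symmetry of $\nu_{\T^d_n,\beta}$ -- precisely the Rademacher drift. Separately, a standard Fourier computation shows that the \emph{unconditioned} $\phi^0$ restricted to any fixed finite box $V$ converges to $\tfrac1{\sqrt\beta}$ times the whole-space field with covariance $(-\Delta_{\Z^d}+m_\star^2)^{-1}$ -- massive when $\beta<\beta_c$, massless (well-defined since $d\ge3$) when $\beta\ge\beta_c$, using $m_n^2\to0$ and the zero-average Green's function analysis at criticality.

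It remains to show that the conditioning does not perturb these local limits: the conditional law of $(\phi^0|_V,\bar\phi)$ given $\{n^d\bar\phi^2+\ntwo{\phi^0}^2=n^d\}$ should converge to the product of the limit of $\phi^0|_V$ above and the law of $\sqrt{a_\star}\,X$ ($X$ a Rademacher variable; $\delta_0$ when $a_\star=0$). Writing $\ntwo{\phi^0}^2=\ntwo{\phi^0|_V}^2+\ntwo{\phi^0|_{V^c}}^2$, the bulk term dominates, concentrates, and is asymptotically independent of $(\phi^0|_V,\bar\phi)$; conditioning the total quadratic on the value $n^d$ then reweights the fixed finite-dimensional Gaussian $(\phi^0|_V,\bar\phi)$ by, essentially, the value at a point of the density of a quadratic form in many weakly dependent Gaussian coordinates, and a local central limit theorem / concentration estimate for that quadratic form shows that an $O(1)$ shift of the conditioned coordinates changes this reweighting by a factor $1+o(1)$. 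Care is needed because the lowest Fourier modes carry anomalously large variance and cannot be absorbed by a crude Lindeberg bound, especially in $d=3$, so they must be treated jointly with $\bar\phi$. Finally, convergence of all local spin correlations follows from the convergence of these (Gaussian) finite-dimensional marginals together with uniform moment bounds, i.e.\ from Wick's rule in the limit.
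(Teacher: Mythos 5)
Your overall strategy coincides with the paper's: realise $\nu_{\T^d_n,\beta}$ as a massive GFF conditioned on its norm, fix $m_n^2$ by the mean constraint $G_{\T^d_n,m_n^2}(0,0)=\beta$, split off the zero mode from the zero-average part, and let concentration of $\ntwo{\phi^0}^2/n^d$ pin the zero mode (Rademacher by the $\phi\mapsto-\phi$ symmetry for $\beta>\beta_c$). The high- and low-temperature regimes, and the reduction of correlation convergence to uniform moment bounds, are all carried out in essentially this way in the paper, and your sketches of those parts are sound in outline (the moment bounds themselves still need a proof; the paper gets them by exchangeability of the eigenmode representation).

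The genuine gap is at $\beta=\beta_c$ for $d=3$ (and to a lesser extent $d=4$), which you correctly flag as ``the main obstacle'' but do not resolve, and which cannot be closed by the local CLT / concentration scheme you describe. Two specific inputs are missing. First, a \emph{signed, rate-sharp} comparison of Green's functions: one needs $G^{0\text{-avg.}}_{\T^d_n}(0,0)=G_{\Z^d}(0,0)+\mathcal{O}(n^{2-d})$ \emph{and}, for $d=3$, the strict inequality $G^{0\text{-avg.}}_{\T^3_n}(0,0)<G_{\Z^3}(0,0)$ uniformly in large $n$. The reason the sign matters is that in $d=3$ the centred, normalised constant $C_n=\bigl(\beta_c n^d-\E[\ntwo{\phi^0}^2]\bigr)/\sqrt{\mathrm{Var}[\ntwo{\phi^0}^2]}$ is $\Theta(1)$ (numerator and denominator are both $\Theta(n^2)$), and the conditioning reweights the marginal by $\E\bigl[(C_n-X_n)^{-1/2}\ind_{\{X_n<C_n\}}\bigr]$ after the $c\mapsto c^2$ Jacobian; bounding this away from $0$ requires $C=\lim C_n>0$, i.e.\ the negativity of the diagonal defect. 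Establishing that sign is a Madelung-constant-type computation and is a substantial piece of the paper. Second, in $d=3$ the Lindeberg condition for $\ntwo{\phi^0}^2$ \emph{fails}: the few lowest Fourier modes carry a non-vanishing fraction of the variance, so the normalised fluctuation $X_n$ converges to a non-Gaussian limit. Your proposed ``local central limit theorem for the quadratic form'' is therefore not available; one must instead identify the (non-Gaussian) limit — the paper does this via moment generating functions, which in turn requires comparing $\mathrm{Trace}\bigl((G')^l\bigr)$ with $\mathrm{Trace}\bigl((G^{0\text{-avg.}}_{\T^d_n})^l\bigr)$ for all $l$ — and separately prove uniform $L^\infty$ bounds on the densities of $X_n$ to control the singular weight $(C_n-X_n)^{-1/2}$ near the endpoint. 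Without these two ingredients the critical case in $d=3$ remains open in your argument.
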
 
We start by observing that for any $n$ and $m^2 > 0$, the constraint $\sum_x \theta_x^2 = n^d$ implies that
\begin{align*}
    \nu_{\T^d_n, \beta}[\d \theta] \propto \exp \bigg( -\frac{\beta}{2} \langle \theta, (-\Delta_{\T^d_n} + m^2)\theta\rangle\bigg) \mathrm{Unif}_{\sqrt{n^d} \Ss^{n^d-1}}[\d \theta].
\end{align*}
Hence, the law of $\sqrt{\beta}\theta$ is related to a massive GFF $\phi$ on $\T^d_n$ (for an arbitrary mass $m^2 > 0$) through conditioning on its norm to be equal to $\sqrt{\beta n^d}$, i.e., $\ntwo{\phi}^2 \coloneqq \sum_{x \in \T^d_n} \phi_x^2 = \beta n^d$ (see Proposition \ref{prop:rel_GFF_spher}). Now the idea is somewhat similar to the one of Lewis and Wannier \cite{LW52}, but from a probabilistic point of view: namely, we choose the mass $m_n^2$ in such a way that $\E[\ntwo{\phi}^2] = \beta n^d$ (or equivalently, $G_{\T^d_n, m_n^2}(0,0) = \beta$), which is natural due to the concentration properties of Gaussian measures. 

For large $n$ and fixed $m > 0$, $G_{\T^d_n, m^2}$ is roughly comparable to $G_{\Z^d, m^2}$; and thus, the question of the existence of a critical point boils down to whether there exists $m^2 \geq 0$ such that $G_{\Z^d, m^2}(0,0) = \beta$. Notice that this explains the difference between the 2D and higher-dimensional cases. Indeed, since in $d = 2$, $G_{\Z^d}(0,0)$ is infinite, by choosing the mass small enough, one could get an arbitrarily large value of the variance $G_{\Z^d, m^2}(0,0)$; whereas in $d \geq 3$, there is a maximal possible value $\beta_c \coloneqq G_{\Z^d}(0,0) < \infty$.

More precisely, we observe that in the high-temperature regime $\beta < \beta_c$, the aforementioned sequence of masses $(m_n^2)_n$ converges to a positive number $m^2>0$ in such a way that $$G_{\T^d_n, m_n^2}(0,0) \rightarrow G_{\Z^d, m^2}(0,0) = \beta.$$ In this case, the concentration of measure for $\ntwo{\phi}^2$ is sufficient to make the conditioning disappear in the limit, and we obtain a purely Gaussian limit. Somewhat refined versions of classical limiting theorems help us handle this phase rather directly. 

The low-temperature phase $\beta > \beta_c$ is already trickier. One can check that $$G_{\T^d_n, m_n^2}(x,y) \sim G_{\T^d_n}^{0\text{-}\mathrm{avg.}}(x,y) + \beta - \beta_c \overset{n \rightarrow \infty}{\sim} G_{\Z^d}(x,y) + \beta - \beta_c,$$ where $G_{\T^d_n}^{0\text{-}\mathrm{avg.}}$ is the correlation function of the zero-average GFF on torus. This heuristically allows us to restate the problem in terms of the zero-average GFF $\gamma$ plus an independent zero mode—a constant-in-space Gaussian drift of the form $Z \mathbf{1}_{\T^d_n}$—conditioned on the norm of the sum being $\sqrt{\beta n^d}$. Since the drift is constant across all sites, almost surely $\frac{1}{n^d}\ntwo{\gamma + Z\mathbf{1}_{\T^d_n}}^2 = \frac{1}{n^d}\ntwo{\gamma}^2 + \lvert Z\rvert^2$. Due to high concentration of $\frac{1}{n^d}\ntwo{\gamma}^2$ around $\beta_c$, conditioning on the norm of the sum forces $\lvert Z\rvert$ to become constant in the limit; and by symmetry, we obtain a Rademacher random variable for the zero mode. Making this precise is slightly more subtle, but after finding the right angle, the proof follows from combining basic concentration of measure results with relatively direct density bounds.

Finally, in the critical case, we observe that $G_{\T^d_n}^{0\text{-}\mathrm{avg.}}(x,y) \sim G_{\Z^d}(x,y)$. However, further refinement of this relation is necessary and is granted by Proposition \ref{prop:improved}: it shows that in $d \geq 3$, $$G_{\T^d_n}^{0\text{-}\mathrm{avg.}}(x,y) = G_{\Z^d}(x,y) + O(n^{2-d});$$ and in $d=3$, where we further need to determine the sign of the error on the diagonal, that $$G_{\T^d_n}^{0\text{-}\mathrm{avg.}}(x,x) < G_{\Z^d}(x,x)$$ uniformly for large $n$. Obtaining the sign was surprisingly tricky and is related to the so-called Madelung constant, arising in the electrostatic potential of certain ionic crystals—a quantity of interest in chemical physics since its introduction in the early 20th century \cite{Madelung}. See Subsection \ref{subsec:further_questions} for a discussion of this connection. 

To deduce convergence of local spin correlations from convergence in law (Corollary \ref{cor:spher_correlations}), we prove the existence of moments of single spherical spins (see Lemma \ref{lemma:spher_moments}), inspired by an analogous result for complexified spherical spins in \cite{Lukkarinen2020}.

\subsection{Spin \texorpdfstring{$O(N)$}{O(N)} model}
The spin $O(N)$ model, introduced (for general $N$) by Stanley in 1968 \cite{StanleyON}, is a fundamental model in statistical mechanics: for $N = 1$, one recovers the Ising model; for $N = 2$, the XY model; and for $N = 3$, the classical Heisenberg model. 
Here, we consider the infinite spin-dimensionality limit, $N \rightarrow \infty$, and its relation to the spherical model.

A connection between the spin $O(N)$ model in the $N \rightarrow \infty$ limit and the spherical model was first suggested by Stanley \cite{Stanley68} and later proved for any fixed non-critical temperature by Kac and Thompson \cite{SpherO(N)Model}; see also \cite{Shcherbina1988} and \cite{gough1993}, which together cover the case of the critical temperature. More precisely, combining the steepest-descent method with an approach similar to the aforementioned mean-spherical one, they showed the equality of the free energies per site of the two models: if $S \in \prod_{x \in \T^d_n} (\sqrt{N} \Ss^{N-1})$ is a configuration of the \emph{spin $O(N)$ model} on $\T^d_n$ at inverse temperature $\beta \geq 0$, i.e.,
\begin{align*}
    S \sim \frac{1}{Z_{\T^d_n,N,\beta}} \exp\bigg( \frac{\beta}{2} \sum_{x \sim y} {S}_x \cdot {S}_y\bigg) \prod_{x \in \T^d_n} \mathrm{Unif}_{\sqrt{N} \Ss^{N-1}} [\d {S}_x],
\end{align*}
then (for $\beta \neq \beta_c$ in \cite{SpherO(N)Model}),
\begin{align*}
    \lim_{n, N \rightarrow \infty} \frac{1}{n^d N} \log Z_{\T^d_n,N,\beta} = \lim_{n \rightarrow \infty} \frac{1}{n^d} \log Z_{\T^d_n,\beta}.
\end{align*}
for any ordering of the limits $N, n \rightarrow \infty$. 
In this article, we take a closer look at this connection and observe that it actually passes by the Gaussian free field. In the following theorem, we describe the local limit of the spin $O(N)$ model when we first take the infinite spin-dimensionality limit $N \to \infty$ and then the thermodynamic limit $\T_n^d \nearrow \Z^d$.
\begin{theorem}[Infinite spin-dimensionality thermodynamic limit of the spin \texorpdfstring{$O(N)$}{O(N)} model]
\label{thm:spinO(N)}
    Any projection on the finitely many ($M$) coordinates of the spin $O(N)$ model on $\T^d_n$ at inverse temperature $\beta > 0$, $\ovlineM{S} \coloneqq (S^i_x)_{x \in \T^d_n}^{i = 1, \ldots, M}$, converges in law as $N \rightarrow \infty$ to that of an $M$-vectorial massive GFF scaled by $1/\sqrt{\beta}$ with the mass $m_n^2$ depending on $\beta, d$ and $n$ in a specific way. The consequent local (uniform on compacts of $\Z^d$) infinite-volume distributional limit ($n \rightarrow \infty$) has the law of:
    \begin{enumerate}
        \item $\beta < \beta_c$: an $M$-vectorial massive GFF on $\Z^d$ scaled by $1/\sqrt{\beta}$ with the mass $m^2$ depending on $\beta$ and $d$ in a specific way;
        \item $\beta = \beta_c$: an $M$-vectorial GFF on $\Z^d$ scaled by $1/\sqrt{\beta}$;
        \item $\beta > \beta_c$: an $M$-vectorial GFF on $\Z^d$ scaled by $1/\sqrt{\beta}$ plus an independent constant random drift $\sqrt{\frac{\beta - \beta_c}{\beta}} \ovlineM{Z}$ with $\ovlineM{Z}$ being an $M$-dimensional standard normal vector.
    \end{enumerate}
    Here, $\beta_c = G_{\Z^d}(0,0)$.
\end{theorem}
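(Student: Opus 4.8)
The plan is to reduce the spin $O(N)$ model to a conditioned vectorial Gaussian field, in direct analogy with the strategy used for Theorem~\ref{thm:spherical}, and then take the two limits in the prescribed order. First I would observe that, just as in the scalar case, for any fixed $m^2>0$ the joint law of $\bfS$ satisfies
\begin{align*}
  \mathbf{S} \sim \frac{1}{Z_{\T^d_n,N,\beta}} \exp\left( -\frac{\beta}{2} \sum_{i=1}^N \langle \mathbf{S}^i, (-\Delta + m^2)\mathbf{S}^i\rangle\right) \prod_{x} \mathrm{Unif}_{\sqrt{N}\Ss^{N-1}}(\d \mathbf{S}_x),
\end{align*}
because on each sphere $\sum_x (S^i_x)^2 = \ldots$ is \emph{not} individually fixed --- only the \emph{sum over $i$ at each site} is --- so the mass term is not constant and one must be more careful than in the spherical case. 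The correct statement is that $\sqrt{\beta}\,\bfS$, viewed as an $n^d\times N$ array, has the law of $N$ i.i.d.\ copies of a massive GFF $\bfphi$ on $\T^d_n$ conditioned on the event that \emph{every column has the same Euclidean norm}, namely that the $N$ row-vectors all lie on spheres of radius $\sqrt{N}$ scaled by $\sqrt\beta$; equivalently, conditioning the $n^d$ site-vectors $(\phi^1_x,\dots,\phi^N_x)\in\R^N$ to have norm exactly $\sqrt{\beta N}$ for each $x$. I would make this precise in the analogue of Section~\ref{subsec:rel_GFF_spher_spinO(N)}.

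Next, fix $n$ and let $N\to\infty$. Here the key mechanism is the law of large numbers on each site: choose the mass $m_n^2$ so that $G_{\T^d_n,m_n^2}(0,0)=\beta$ (exactly as in the spherical case, this is the natural ``mean-spherical'' choice and is well-defined for every $n$). Then for a single massive GFF $\bfphi$ on $\T^d_n$ with mass $m_n^2$, the $N$ i.i.d.\ coordinates at a fixed site $x$ are i.i.d.\ centred Gaussians with variance $\beta$, so $\frac1N\sum_{i=1}^N (\phi^i_x)^2 \to \beta$ almost surely; the conditioning event becomes a null-set-limit around a typical configuration. I expect the clean way to run this is via a Gaussian local CLT / density argument: the conditional density of $\ovlineM{\bfphi}=(\phi^i_x)_{x\in\Lambda}^{i\le M}$ given the $n^d$ norm constraints can be written explicitly, and as $N\to\infty$ it converges pointwise to the unconstrained density of an $M$-vectorial massive GFF on $\T^d_n$ with mass $m_n^2$, with a uniform integrable bound giving convergence in law and of moments. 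Since the finite-$n$ object is just $M$ decoupled copies of the scalar situation (the covariance does not mix coordinates, and the constraint is coordinate-symmetric), the limit is exactly an $M$-vectorial massive GFF scaled by $1/\sqrt\beta$ with mass $m_n^2$. This proves the first assertion.

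Finally, take $n\to\infty$ in the $M$-vectorial massive GFF with mass $m_n^2$. Now the behaviour of the sequence $m_n^2$ — forced by $G_{\T^d_n,m_n^2}(0,0)=\beta$ — is \emph{exactly} the analysis already carried out for Theorem~\ref{thm:spherical}: in the high-temperature regime $\beta<\beta_c$ one has $m_n^2\to m^2>0$ with $G_{\Z^d,m^2}(0,0)=\beta$, giving an $M$-vectorial massive GFF; at $\beta=\beta_c$ one has $m_n^2\to 0$ and, using Proposition~\ref{prop:improved} coordinatewise to control $G_{\T^d_n,m_n^2}(x,y)-G_{\T^d_n}^{0\text{-avg.}}(x,y)$ and $G_{\T^d_n}^{0\text{-avg.}}-G_{\Z^d,0}$, the zero mode contributes nothing in the limit and one gets an $M$-vectorial standard GFF; and for $\beta>\beta_c$ the mass $m_n^2$ decays like a multiple of $n^{-d}$, so that each coordinate splits (in the $n\to\infty$ limit) into a zero-average GFF plus a constant Gaussian drift of variance $\beta-\beta_c$, and the $M$ drifts are independent because the coordinates of the massive GFF were. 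This gives $\ovlineM{Z}$ standard normal in $\R^M$. The main obstacle I anticipate is the first limit: unlike in the spherical model, the $N$ coordinates of the $O(N)$ model are \emph{not} independent for finite $N$ (they are coupled through the per-site norm constraints), so the convergence $N\to\infty$ to i.i.d.\ coordinates requires a genuine quantitative argument — a Gaussian local-CLT estimate uniform in the finitely many sites of $\Lambda$ — rather than a soft symmetry argument; establishing the uniform density bound needed to upgrade pointwise density convergence to convergence in law (and of local correlations) is where the real work lies. Everything downstream reuses, coordinate by coordinate, the Green's-function estimates and concentration arguments already developed for Theorem~\ref{thm:spherical}.
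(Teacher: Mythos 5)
Your proposal matches the paper's proof in essentially every respect: the reduction to an $N$-vectorial massive GFF conditioned on the per-site norms (Proposition \ref{prop:mGFF-O(N)}), the choice $G_{\T^d_n,m_n^2}(0,0)=\beta$, the multivariate local CLT for the densities of the vector $\bigl((\Phi_x^i)^2-\beta\bigr)_{x\in\Lambda}$ to remove the conditioning as $N\to\infty$ (Proposition \ref{prop:dens_conv}), and then the purely Gaussian covariance analysis of $G_{\T^d_n,m_n^2}$ as $n\to\infty$ in the three regimes. Two minor corrections: the mass term is in fact constant on the support of $\mu_{\Lambda,N,\beta}$ (since $\norm{S_x}^2=N$ at every site, $\sum_{x,i}(S_x^i)^2=Nn^d$ deterministically), so no extra care is needed there; and the critical case of the second limit does not require Proposition \ref{prop:improved} --- since the limit is Gaussian one only needs covariance convergence, which the paper obtains by a simple sandwich between $G_{\Lambda_n,\varepsilon}$ and $G_{\Lambda_n,\min(m_n^2,n^{-2-\delta})}$ using standard Green's-function facts.
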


As a corollary—maybe a little surprisingly—we see that at least in the order of limits we take, although the free energies per site of the two models agree at all temperatures, their limiting laws are, in fact, different in the low-temperature regime. We expect this to hold regardless of the order in which the limits are taken. The a posteriori explanation of this appearing discrepancy in the low-temperature regime is not particularly hard: differences in the law of a single degree of freedom—such as the drift part of the limiting field—vanish when computing the limiting free energy per site. 
\begin{corollary} \label{intro:cor:sphr_spin}
    In the high-temperature regime and at the critical temperature $\beta_c$, the local distributional limit of the spherical model and the corresponding limiting law (obtained by first taking $N \rightarrow \infty$ and then the volume of the torus to infinity) of a single spin-coordinate process of the spin $O(N)$ model agree; while in the low-temperature regime, they differ in the law of the global zero mode.
\end{corollary}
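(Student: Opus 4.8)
The plan is to read off and compare, regime by regime, the two limiting laws produced by Theorem~\ref{thm:spherical} and by Theorem~\ref{thm:spinO(N)} specialised to $M = 1$ (a single spin-coordinate process), and then to pin down the sense in which the low-temperature limits differ. In the high-temperature regime $\beta < \beta_c$ both theorems give a massive GFF on $\Z^d$ scaled by $1/\sqrt{\beta}$; in both constructions the mass is the \emph{same} quantity, namely the unique $m^2 > 0$ solving $G_{\Z^d, m^2}(0,0) = \beta$, which is exactly how $m^2$ is selected in each case (through the requirement $\E[\ntwo{\bfphi}^2] = \beta n^d$, i.e.\ $G_{\T^d_n, m_n^2}(0,0) = \beta$, together with $m_n^2 \to m^2$). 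Hence the two limiting laws coincide. At $\beta = \beta_c$ both limits are the standard GFF on $\Z^d$ scaled by $1/\sqrt{\beta_c}$, so again they agree; this settles the first assertion.

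For $\beta > \beta_c$ I would write the spherical limit as $\tfrac{1}{\sqrt{\beta}}\varphi + \sqrt{\tfrac{\beta - \beta_c}{\beta}}\,X\mathbf{1}$ and the single-coordinate spin $O(N)$ limit as $\tfrac{1}{\sqrt{\beta}}\varphi' + \sqrt{\tfrac{\beta - \beta_c}{\beta}}\,Z\mathbf{1}$, where $\varphi,\varphi'$ are standard GFFs on $\Z^d$, $X$ is Rademacher, $Z$ is standard Gaussian, and in both cases the drift is independent of the field. To give the phrase ``law of the zero mode'' content, I would exhibit the constant drift as a genuine measurable functional of the field: the GFF on $\Z^d$ is a mean-zero stationary field whose averages over boxes of side $R$ tend to $0$ (the expected square of such an average decays like $R^{2-d}$, hence it tends to $0$ in $L^2$ and, along a deterministic subsequence, almost surely), so the almost-sure limit of box-averages of the full field equals the drift. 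Under this functional the spherical limit produces the rescaled Rademacher variable and the spin $O(N)$ limit the rescaled Gaussian; since $\beta - \beta_c > 0$ these two laws are distinct, while subtracting the zero mode leaves a standard GFF scaled by $1/\sqrt{\beta}$ in both cases. Thus the discrepancy is confined precisely to the zero mode.

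I do not anticipate a real obstacle: the statement is essentially a corollary read directly off the two main theorems. The only place that warrants a line or two of care — and the ``hardest'' point, such as it is — is the precise meaning of ``differ in the law of the zero mode'': one should check that the zero mode is recovered as the box-average functional above, and it is worth recording the elementary fact that a non-degenerate two-point Gaussian mixture is never Gaussian (its fourth cumulant equals $-2\big(\tfrac{\beta-\beta_c}{\beta}\big)^2 \neq 0$), so that even the one-point marginals of the two limiting fields already disagree once $\beta > \beta_c$.
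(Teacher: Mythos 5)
Your proposal is correct and follows essentially the same route as the paper: the corollary is read off directly by comparing Theorem~\ref{thm:spherical} with Theorem~\ref{thm:spinO(N)} specialised to $M=1$, observing that in both constructions the mass is fixed by the same equation $G_{\Z^d,m^2}(0,0)=\beta$ (so the high-temperature and critical limits coincide) and that a rescaled Rademacher drift differs in law from a rescaled Gaussian one when $\beta>\beta_c$. Your extra care in realising the zero mode as the almost-sure limit of box averages of the field, and the fourth-cumulant remark showing even the one-point marginals disagree, are both sound additions that the paper does not spell out but that sharpen the meaning of ``differ in the law of the global zero mode.''
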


The proof of the theorem consists of two steps: first we show that as the spin-dimensionality $N$ tends to infinity, the law converges locally (in spin-dimensionality) to a vector-valued massive GFF with mass $m_n^2$ chosen in such a way that $G_{\T^d_n, m_n^2}(0,0) = \beta$; and then take the infinite volume limit of the Gaussian measure of $m_n^2$-massive GFF on torus. The first step follows from a multidimensional version of the classical local limit theorem, similarly to our proof of the high-temperature regime of the spherical model, the proof of the latter step is quite simple and requires only a few well-known results about the Green's functions on $\T^d_n$ and $\Z^d$.

We believe that the theorem should also hold in the opposite order of limits. In fact in the high-temperature regime this would follow directly from the work of A. Kupiainen \cite{o(n)expansion}; and we are hopeful that by combining some of his ideas with our approach, we would be able to extend this interchange of limits to all temperatures.

\subsection{Further questions and wider context} \label{subsec:further_questions}
 
To complete the probabilistic picture of the spherical model, which at the very least provides a statistical physics model that can be fully analyzed in all dimensions and phases, one would need to investigate its behavior across a variety of domains, under different boundary conditions, and in the presence of a magnetic field, ideally extending the analysis to more general graphs and interaction structures. The qualitative effects of adding boundary conditions or a constant magnetic field are not difficult to anticipate, and our proofs can be adapted to cover these cases; however, extending these methods to more general (also weighted) graphs may present new challenges.

As mentioned before, an interesting aspect of the critical case of the spherical model was the study of the zero-average Green's function on the torus and the question of whether it is smaller than the lattice Green's function. To this end, we analyzed the zero-average Green's function on the ``boundary'' (of the preimage of the torus in $\Z^d$ under the canonical projection), which is closely related to the so-called Madelung constant in chemical physics and mathematical chemistry \cite{Madelung, Mbook}. Let us explain this connection: one way to study the sign of $G^{0\text{-}\mathrm{avg.}}_{\T^d_n}(0, y)$ for $y \in (\partial[-\lfloor n/2\rfloor, \lfloor n/2\rfloor ]^d) \cap \Z^d$ is to take an appropriate scaling limit and examine the zero-average continuum Green's function $G_{\T^d}(0, x)$ for $x \in \partial[-1/2, 1/2]^d$, viewed as points on the torus $\T^d$ of side-length one. These quantities provide versions of the Madelung constant—for example, $d(2)$, $c(2)$, and $b(2)$ in \cite[(1.3.29) and Table 1.4]{Mbook} correspond precisely to $(2\pi)^2 G_{\T^3}(0, x)$ with $x = (1/2, 1/2, 1/2)$, $x = (0, 1/2, 1/2)$, and $x = (0, 0, 1/2)$, respectively. Determining or approximating the Madelung constants is a well-known problem. While explicit expressions exist—for instance, via Fourier series—the corresponding expansions are not absolutely convergent and may even diverge under natural truncations. In our case, we found a way to prove an inequality sufficient for our purposes directly on the lattice, although our bound is far from optimal. This naturally leads to several interesting questions of potential analysis. For example, it would be interesting to better understand the negativity region of the zero-average Green's function in all dimensions $d \geq 3$. 

Another perspective on the spherical model comes from its role as a simple framework for studying condensation phenomena \cite{Lukkarinen2020}. The connection between the spherical model and the ideal Bose gas—via their shared critical behavior \cite{PhysRev.166.152} and the emergence of Bose–Einstein condensation \cite{gough1993}—was first noted in the late 1960s and early 1990s, and has attracted renewed interest in recent years \cite{PhysRevResearch.1.023022, Lukkarinen2020}. It could be interesting to revisit this link, and to study other models that exhibit condensation, using the kind of probabilistic limit theorems and concentration-of-measure techniques we apply here, to see what extra insight such an approach might offer.

We now turn to questions concerning the spin $O(N)$ model. To complete the picture of its limiting distribution, the primary objective would be to find a way of interchanging the order of the infinite spin-dimensionality and infinite-volume limits at all temperatures. We expect this to be within reach.

In a broader context, an intriguing question is whether our approach could provide new insights into Polyakov's conjecture \cite{POLYAKOV} on the exponential decay of correlation functions in 2D spin $O(N)$ models with $N \geq 3$. While it is difficult to see how our techniques might help to resolve the conjecture, one could hope to extract useful quantitative bounds on the critical temperatures in the 2D spin $O(N)$ model as $N \to \infty$ (cf. \cite{o(n)expansion}).

Finally, our results for the large-$N$ limit of the spin $O(N)$ model—particularly in light of Proposition~\ref{prop:dens_conv}—can also be placed in the context of propagation of chaos \cite{SznitmanChaos}. The classical example is the Poincaré lemma, which states that the marginals of the spherical measure in high dimensions converge to independent Gaussians. In our setting, we observe that across all temperature regimes, the spin $O(N)$ measures are chaotic: as $N \to \infty$, the projection of the measure to any finite set of coordinates converges to a product measure. In principle, one could extend this analysis to spin models with additional non-quadratic interaction terms—such as quartic terms in the Hamiltonian—though it is not yet clear whether such generalizations would reveal anything beyond the behavior already observed.

\subsection{Outline}
Section \ref{sec:setup} defines the precise setting of the spherical and spin $O(N)$ models, provides the necessary background on various versions of the GFF, and explains their connection to the two models. It also contains new results on the zero-average Green's function. Section \ref{sec:sphr} focuses on the spherical model: in particular, we prove Theorem \ref{thm:spherical} and establish convergence of correlations for any finite number of its spins. Section \ref{sec:O(N)} turns to the spin $O(N)$ model and proves Theorem \ref{thm:spinO(N)}. To keep the article concise, some proofs less central to the main narrative have been moved to Appendix \ref{sec:Appendix}.

\subsection{Acknowledgements}
We would like to thank A. Prévost for interesting discussions.
Both authors are supported by Eccellenza grant 194648 of the Swiss National Science Foundation and are members of NCCR Swissmap.


\section{Setup and preliminaries}
\label{sec:setup}

In this section, we introduce the notation used throughout the article, along with several variants of the Gaussian free field that are closely related to our models of interest, namely the spherical and spin $O(N)$ models. We also recall some properties of these GFFs and their corresponding Green’s functions (their correlation structures), which will be relevant for our proofs.

The only new contribution of this section is Proposition \ref{prop:improved}, which provides refined bounds on the zero-average Green's function on the discrete torus and compares it with the Green's function on $\Z^d$.

\subsection{Notation, definitions, and setup of our two models}

Let $d \geq 2$, $n \in \N$, and $G$ be either the discrete $d$-dimensional torus of side length $n$, i.e., $G = \T^d_n \coloneqq (\Z/n\Z)^d$, or $\Z^d$, both viewed as a graph with the canonical nearest-neighbor structure. For $x,y \in G$, we write $\d(x,y)$ for the graph distance and $\lvert x-y \rvert$ for the Euclidean distance between the vertices. $x\sim y$ denotes two neighboring sites of $G$, and sums over $x\sim y$ run over pairs $(x, y)$ such that $\{x,y\}$ is an edge in $G$. 
Given $A\subset G$, we write $A^c = G\setminus A$ for the complement of $A$ in $G$, $\mathbf{1}_A$ for the vector $(1)_{x \in A}$, and, analogously, for any given $w\in \R^{G}$ we set $w_A \coloneqq (w_x)_{x \in A}$.

If $G = \T^d_n$, to highlight the dependence on $n$, we often write $\Lambda_n$ instead. When considering $A \subset \Z^d$, we also view it as a subset of $\Lambda_n$ for each $n$ (by canonically projecting it onto $\Lambda_n$). Conversely, for $U \subset \Lambda_n$, we denote by $\hat{U}$ the unique preimage of $U$ under the canonical projection contained in $[-n/2, n/2)^d$—in particular, $\hat{\Lambda}_n = [-n/2, n/2)^d \cap \Z^d$. Moreover, let $\langle\cdot, \cdot\rangle$ be the canonical inner product on $\R^{A}$ and $\ntwo{\cdot}$ be the associated norm, in order to distinguish them from the Euclidean inner product and norm on $\R^k$ for $k \in \N$, which we denote by $\cdot$ and $|\cdot|$, respectively.

For the reader's convenience, we recall the definitions of the spherical and spin $O(N)$ models in the setting of interest.
\begin{definition}[Spherical model]
\label{def:spher}
    Let $\beta > 0, n \in \N$. We call $\theta \coloneqq (\theta_x)_{x\in \Lambda_n} \in \sqrt{n^d} \;\Ss^{n^d-1}$ a configuration of the {spherical model} on $\Lambda_n$ at inverse temperature $\beta$ if $\theta \sim \nu_{\Lambda_n,\beta}$ with
    \begin{align*}
        \nu_{\Lambda_n,\beta} [\d \theta] = \frac{1}{Z_{\Lambda_n, \beta}} \exp\bigg( \frac{\beta}{2} \sum_{x\sim y} \theta_x\theta_y \bigg) \mathrm{Unif}_{\sqrt{n^d}\;\Ss^{n^d-1}}[\d \theta],
    \end{align*}
    where $Z_{\Lambda_n, \beta}$ is the normalizing constant.
\end{definition}
\begin{definition}[Spin $O(N)$ model]
\label{def:spinO(N)}
    Let $\beta > 0, n, N \in \N$. We call $S \coloneqq (S_x)_{x\in \Lambda_n} \in (\sqrt{N} \: \Ss^{N-1})^{n^d}$ a configuration of the {spin $O(N)$ model} on $\Lambda_n$ at inverse temperature $\beta$ if $S \sim \mu_{\Lambda_n, N, \beta}$ with
    \begin{align*}
        \mu_{\Lambda_n, N,\beta} [\d S] = \frac{1}{Z_{\Lambda_n, N, \beta}} \exp\bigg( \frac{\beta}{2} \sum_{x\sim y} S_x \cdot S_y \bigg) \prod_{x \in \Lambda} \mathrm{Unif}_{\sqrt{N}\;\Ss^{N-1}}[\d S_x],
    \end{align*}
    where $Z_{\Lambda_n, N, \beta}$ is the normalizing constant.
\end{definition}
In the sequel we will always refer to a configuration of the spherical model as $\theta$ and of the spin $O(N)$ model as $S$.


\subsection{Massive GFF, its vectorial descendant and zero-average GFF}
\label{subsec:GFF}
In the present subsection, we review several variants of the GFF, which, as we will see later, are closely related to our models of interest both in the finite setting and in the distributional limit. We begin by defining the vector-valued (massive) Gaussian free field on a discrete torus with Dirichlet boundary conditions. 
\begin{definition}[$m^2$-massive $N$-vectorial GFF]
\label{def:GFF}
    Let $m \geq 0, d\geq 2$, $N, n \in \N$, and $G = \T^d_n$. Let $U \subset G$ be an arbitrary subset (possibly empty if $m>0$). We call a random function $\phi: G \rightarrow \R^N$ an $m^2$-massive $N$-vectorial Gaussian free field (GFF) on $G$ with Dirichlet boundary condition on $U$ if $\phi_x = 0$ for all $x \in U$ and
    \begin{align} \label{eq:gff_def}
        \P[\d \phi] \propto \exp \bigg(- \frac{1}{4}\sum_{x\sim y} |\phi_x - \phi_y|^2 - \frac{m^2}{2} \sum_{x \in G} |\phi_x|^2\bigg) \prod_{x \in G\setminus U} \d \phi_x.
    \end{align}
    If $m^2 = 0$, we drop the term ``massive'' from the name; if $N=1$, the ``$N$-vectorial'' part. If $U = \emptyset$, we drop the ``Dirichlet'' part altogether. 
\end{definition}
\begin{remark}
    From the expression in \eqref{eq:gff_def}, it is evident that the coordinate processes of $\phi$ are i.i.d., each being an $m^2$-massive GFF on $G$ with Dirichlet boundary condition on $U$. We therefore restrict to the scalar case when studying the covariance structure of the field and when discussing component-wise properties of $\phi$. 
\end{remark}

When $N = 1$, we can rewrite the expression in the exponent of \eqref{eq:gff_def} as follows
\begin{align}
\label{eq:discrGaussform}
    \frac{1}{2}\sum_{x\sim y} |\phi_x - \phi_y|^2 + m^2\sum_{x} |\phi_x|^2 = \big\langle\phi, (-\Delta_{G} + m^2) \phi\big\rangle,
\end{align}
where $\langle \cdot, \cdot\rangle$ stands for the inner product on $\R^{G}$ and 
\begin{align*}
    -\Delta_{G} f (x) = 2d f(x) - \sum_{y \in G: y\sim x} f(y) \quad \text{for any } \,x \in G, \, f: G \rightarrow \R
\end{align*}
is the discrete Laplacian on $G$. When restricted to the set $F_U$ of functions $f: G \rightarrow \R$ with $f(x) = 0$ for all $x \in U$, we denote it by $-\Delta_{G\setminus U}$ (or $-\Delta_{U^c}$) and call it the discrete Laplacian with Dirichlet boundary condition on $U$. As a straightforward consequence of \eqref{eq:discrGaussform}, we see that $-\Delta_{U^c}$ is invertible on $F_U$—note that we assume that $U \neq \emptyset$ or $m^2 >0$—and its inverse $(G_{G\setminus U, m^2}(x,y))_{x,y \in G}$, which is then also the covariance matrix of $\phi$, is the massive (if $m^2>0$) Green's function on $G$ with Dirichlet boundary condition on $U$. If $m^2 = 0$, we often drop the corresponding subscript and write simply $G_{G\setminus U}$.

It is a well-known fact (see for instance \cite[Section 1]{Rod17}) that $G_{U^c, m^2}$ (where $U^c = G\setminus U$) is related to random walks in the following way: for all $x,y \in G$, $\alpha \coloneqq \frac{m^2}{2d+m^2}$
\begin{equation}
\label{eq:Gfct_RW}
\begin{aligned}
    G_{U^c, m^2}(x,y) 
    &\coloneqq \frac{1-\alpha}{2d} \sum_{k \geq 0}\P^x_{\alpha, U}[X_k = y] = \frac{1-\alpha}{2d} \sum_{k \geq 0} (1-\alpha)^k \P^x_{0}[X_k = y, k< H_U], 
\end{aligned}
\end{equation}
where $\P_{\alpha,U}^x$ denotes the law of a random walk $X$ on the graph $G\cup \{\dag\}$ (obtained from $G$ by adding an edge connecting each vertex of $G$ to $\dag$) starting at $x \in G$ with transition probabilities 
\begin{align*}
    p_{u,v} = \frac{1-\alpha}{2d} \ind_{\{u\sim v\}}, \; p_{u,\dag} = \alpha, \; p_{w,w} = 1, \; \text{for all } u \in U^c, v\in G, w\in U \cup \{\dag\},
\end{align*}
$H_U\coloneqq \inf\{k \geq 0: X_k \in U\}$ is the first hitting time of $U$, and $\P^x_{0}$ is the law of a simple random walk (SRW) on $G$ started at $x \in G$. 

Observe that the right-hand side of \eqref{eq:Gfct_RW} remains well-defined (finite) if $G = \Z^d$ for $d\geq 2$—we denote it by $G_{\Z^d\setminus U, m^2}$—under the assumption that $U\neq \emptyset$ or $m^2 > 0$ if $d=2$, since a SRW on $\Z^d$ is transient for $d\geq 3$, while it is recurrent for $d=2$. This also explains our assumption on $m^2, U$ if $G = \T^d_n$, since a SRW on $\T^d_n$ is recurrent. 
Under this assumption on $U$ and $m^2$, we can now define an \emph{$m^2$-massive $N$-vectorial GFF on $\Z^d$ with Dirichlet boundary condition on $U \subset \Z^d$} as a vector of $N$ i.i.d. centered Gaussian processes with covariance matrix $G_{\Z^d\setminus U, m^2}$ each. 

Note furthermore that from \eqref{eq:Gfct_RW}, we get that for any fixed finite $U \subset \Z^d$ and $m^2 \geq 0$ ($m^2 > 0$ if $U=\emptyset$ and $G = \T^d_n$ or $G = \Z^2$), as $G\nearrow \Z^d$, an $m^2$-massive $N$-vectorial GFF $\phi$ on $G$ with Dirichlet boundary condition on $U$ converges in law, uniformly on compact subsets of $\Z^d$, to an $m^2$-massive $N$-vectorial GFF $\psi$ on $\Z^d$ with Dirichlet boundary condition on $U$. Moreover, we can extend the argument to a sequence of $m_n^2$-massive GFFs, each defined on $\Lambda_n = \T^d_n$, under the assumption that $m_n^2\rightarrow m^2 > 0$ as $n\rightarrow \infty$. Indeed, \eqref{eq:Gfct_RW} guarantees the existence of $C>0$, uniform in $n$, such that $\lvert G_{\Lambda_n\setminus U, m_n^2} - G_{\Lambda_n \setminus U, m^2}\rvert \leq C \lvert \alpha - \alpha_n\rvert G_{\Lambda_n \setminus U, m^2}$. The latter, in turn, converges to zero uniformly over any compact subset of $\Z^d$ as $n$ tends to infinity. 

Let us end the discussion of a massive GFF in this subsection by stating one more of its canonical properties that will be frequently used in the sequel. Its proof may be found in \cite[Lemma 1.1]{Rod17} (see also \cite[Lemma 3.1]{Biskup_notes} for the massless case).
\begin{proposition}[Domain Markov property of the (massive) GFF]
\label{prop:domainMP}
    Let $G$ be either $\T^d_n$ or $\Z^d$, $U \subset G$, and $m^2 \geq 0$ ($m^2 > 0$ if $U=\emptyset$ and $G = \T^d_n$ or $G = \Z^2$). Let $\phi$ be an $m^2$-massive GFF on $G$ with Dirichlet boundary condition on $U$. For $K \subset G$, set
     \begin{align*}
        h^{\phi_K}_x = \E[\phi_x |\F_K] \quad \text{for all } x \in G,
    \end{align*}
    where $\F_K \coloneqq \sigma(\phi_x: x \in K)$. Then $h^{\phi_K} = (h^{\phi_K}_x)_{x\in G}$ is discrete massive-harmonic in $G\setminus (K \cup U)$ (i.e., $(-\Delta_{G} + m^2) h^{\phi_K} (y) = 0$ for any $y \in G\setminus (U\cup K)$) with boundary values determined by $h^{\phi_K}_x = \phi_x$ for any $x \in K\cup U$. 
    In terms of the random walk $X \sim \P_{\alpha,U}^x$ as in \eqref{eq:Gfct_RW}, it is given by
    \begin{align*}
        h^{\phi_K}_x = \sum_{y \in K} \P^x_{\alpha, U}[H_K < \infty, X_{H_K} = y] \:\phi_y.
    \end{align*}
    Moreover, the field $\phi^{G\setminus K} \coloneqq (\phi_x - h^{\phi_K}_x)_{x\in G}$ is independent of $\F_K$ and has the law of an $m^2$-massive GFF on $G$ with Dirichlet boundary condition on $K \cup U$. 
\end{proposition}

We conclude this subsection by introducing another version of a (massless) GFF on the discrete torus $\Lambda_n = \T^d_n$ in dimension $d \geq 3$: the so-called zero-average GFF $\gamma$. The name reflects the constraint imposed to reduce the dimensionality of the space on which the GFF measure is defined—namely, that $\sum_x \gamma_x = 0$. 
\begin{definition}[Zero-average GFF on torus]
\label{def:zeroavGFF}
    Let $n \in \N, d \geq 3$. We call a random function $\gamma: \Lambda_n \rightarrow \R$ a {zero-average Gaussian free field on $\Lambda_n$} if it is distributed according to
    \begin{align}\label{eq:0-avf_GFF_def}
        \P[\d \gamma] \propto \Big(- \frac{1}{4}\sum_{x\sim y} (\gamma_x - \gamma_y)^2 \Big) \delta_0 \Big(\sum_x \gamma_x \Big) \prod_{x\in \Lambda_n} \d \gamma_x.
    \end{align}
\end{definition}
To find the covariance matrix of $\gamma$ while avoiding direct computation with the Dirac measure, let $(\eta_w)_{w \in [0,n)^d \cap \Z^d}$ be the eigenvalues of $-\Delta_{\Lambda_n}$ and $(q^w)_w$ be the corresponding orthogonal eigenvectors (cf. Lemma \ref{lemma:massive_EVs}); in particular, $q^0 = \frac{1}{\sqrt{n^d}} \1_{\Lambda_n}$. Set $Q \coloneqq (q^0, (q^w)_{w \neq 0})$ and observe that  
\begin{align*}
    \frac{1}{2} \sum_{x\sim y} (\gamma_x - \gamma_y)^2 =  \big\langle Q^T \gamma, \,\mathrm{diag}\big(0,  (\eta_w)_{w \neq 0}\big) Q^T\gamma \big\rangle; &&
    \frac{1}{\sqrt{n^d}} \sum_{x \in \T^d_n} \gamma_x = \langle q^0, \gamma\rangle = (Q^T \gamma)_0.
\end{align*}
This allows us to conclude that the zero-average GFF has the same law as the orthogonal transformation of a vector of $n^d-1$ independent centered normal random variables $Z_w$ with variance $1/\eta_w$ for $w \in [0,n)^d \cap\Z^d \setminus \{0\}$:
\begin{align}\label{eq:gamma_law_iid}
    \gamma \overset{\text{law}}{=} Q \big(0, (Z_w)_{w\neq 0}\big)^T,
\end{align}
which in turn implies that the covariance matrix of $\gamma$, known as the \emph{zero-average Green's function}, is given by
\begin{align*}
    \big(G_{\Lambda_n}^{0\text{-}\mathrm{avg.}}(x,y)\big)_{x,y \in \Lambda_n} \coloneqq Q \,\mathrm{diag}\bigg(0, \Big(\frac{1}{\eta_k} \Big)_{w \in [0,n)^d \cap\Z^d \setminus \{0\}} \bigg) \,Q^T.
\end{align*} 
 We recall and prove several of its properties in Subsection \ref{subsec:Greens_fcts}.


\subsection{Massive GFF, its vectorial descendant, and the zero-average GFF: relation to the spherical and spin \texorpdfstring{$O(N)$}{O(N)} models}
\label{subsec:rel_GFF_spher_spinO(N)}

Let $d \geq 2$, $n$ and $N \in \N$. Recall that $\Lambda_n = \T^d_n$ is the $d$-dimensional discrete torus. 

\begin{proposition}[Massive and zero-average GFFs: relation to the spherical model]
\label{prop:rel_GFF_spher}
    Let $\nu_{\Lambda_n, \beta}$ be the spherical model on $\Lambda_n$ at inverse temperature $\beta > 0$. Then, for any $m^2>0$, the conditional law of an $m^2$-massive GFF $\phi$ on $\Lambda_n$ given $\norm{\phi} = \sqrt{\beta n^d}$ is exactly $\nu_{\Lambda_n, \beta}$. That is,
    \begin{align*}
        \nu_{\Lambda_n, \beta} = \mathrm{Law} \bigg(\frac{\phi}{\sqrt{\beta}} ~\bigg\vert~ \norm{\phi} = \sqrt{\beta n^d} \bigg), 
    \end{align*}
    Moreover, if $d\geq 3$, $\gamma$ is a zero-average GFF on $\Lambda_n$, and $c$ is an ``independent'' Lebesgue constant, i.e., the ``law'' of $(\gamma, c)$ is $\mathcal{N} (0, G_{\Lambda_n}^{0\text{-}\mathrm{avg.}}) \times \d c$, then
    \begin{align*}
        \nu_{\Lambda_n, \beta} = \mathrm{Law} \bigg(\frac{\gamma + c \mathbf{1}_{\Lambda_n}}{\sqrt{\beta}} ~\bigg\vert~ \norm{\gamma + c \mathbf{1}_{\Lambda_n}} = \sqrt{\beta n^d} \bigg). 
    \end{align*}
\end{proposition}
\begin{proof}
    Using polar coordinates, it is easy to obtain an expression for the conditional density of $\frac{\phi}{\sqrt{\beta}}$ given $\norm{\phi} = \sqrt{\beta n^d}$ at $v \in \R^{\Lambda_n}$ with $\norm{v} = \sqrt{n^d}$:
    \begin{align*}
        f_{\frac{\phi}{\sqrt{\beta}}~\big\vert \norm{\phi} = \sqrt{\beta n^d}} (v) \d v 
        &\propto \exp\bigg(-\frac{\beta}{2} \big\langle v, (-\Delta_{\Lambda_n} + m^2) v \big\rangle\bigg) \mathrm{Unif}_{\sqrt{n^d} \: \Ss^{n^d-1}} (\d v) \\
        &\propto \exp\bigg(\frac{\beta}{2} \sum_{x\sim y} v_x v_y \bigg) \mathrm{Unif}_{\sqrt{n^d} \: \Ss^{n^d-1}} (\d v) \propto \nu_{\Lambda_n, \beta}.
    \end{align*}
    In the last line, we used that $\langle v, v \rangle = n^d$.
    
    As for the second statement, note that upon conditioning, we indeed have a probability measure. To better understand the latter, let us instead consider the orthogonal transformation of the sum—this does not affect the conditioning. More precisely, let $Q$ be the orthogonal matrix diagonalizing $G_{\Lambda_n}^{0\text{-}\mathrm{avg.}}$, that is, $Q^T G_{\Lambda_n}^{0\text{-}\mathrm{avg.}} Q$ is diagonal. It is clear that the ``law'' of $\gamma_{c, Q} \coloneqq Q^T(\gamma + c \mathbf{1}_{\Lambda_n})$ is given by $\frac{1}{\sqrt{n^d}} \d c \times \bigtimes_{k=2}^{n^d} \mathcal{N} (0, \eta_k^{-1})$, where $(\eta_k)_{k \geq 2}$ are the non-zero eigenvalues of $-\Delta_{\Lambda_n}$. Restricted to a finite ball and normalized accordingly, it is a probability measure that we denote by $\rho$. For our purposes we can consider, e.g., $\mathcal{B}^{n^d}(0, 2\sqrt{\beta n^d})$. Then, analogously to the above, for any Borel-measurable $B \subset \R^{\Lambda_n}$,
    \begin{align*}
        \rho\Big[B ~\Big\vert~ \ntwo{\gamma_{c, Q}} = \sqrt{\beta n^d}\Big] &\propto \int \ind_{B} (v) \prod_{k=2}^{n^d} \exp\left(-\frac{\beta}{2} \eta_k v_k^2 \right) \mathrm{Unif}_{\sqrt{n^d} \: \Ss^{n^d-1}} (\d v) \\
        &= \int \ind_{Q B} (\sigma) \exp\bigg(\frac{\beta}{2} \sum_{x\sim y} \sigma_x \sigma_y \bigg) \mathrm{Unif}_{\sqrt{n^d} \: \Ss^{n^d-1}} (\d{\sigma}) \propto \nu_{\Lambda_n, \beta} [Q B]
    \end{align*}
    In the second line we used the substitution $v = Q^T \sigma$ so that $\sum_{k=2}^{n^d} \eta_k v_k^2 = \langle \sigma, -\Delta_{\Lambda_n} \sigma\rangle$, and the fact that the uniform measure on the sphere is invariant under orthogonal transformations. 
\end{proof}

We also have a similar statement for the spin $O(N)$ model, which was implicitly used already in \cite{o(n)expansion} and proven in a slightly different setting in \cite[Proposition 2.3]{AGS22}. We omit its proof as it is fully analogous to the proof of the first statement of the previous proposition and can also be easily adjusted from the latter reference.
\begin{proposition}[Relation between an $N$-vectorial massive GFF and the spin \texorpdfstring{$O(N)$}{O(N)} model]
\label{prop:mGFF-O(N)}
    Let $m^2> 0$ and $\phi$ be an $N$-vectorial $m^2$-massive GFF on $\Lambda_n$, $\beta > 0$. The conditional law of $\phi/\sqrt{\beta}$ given $|\phi_x| = \sqrt{\beta N}$ for all $x\in \Lambda_n$ is that of the spin $O(N)$ model on $\Lambda_n$ at inverse temperature $\beta$. 
\end{proposition}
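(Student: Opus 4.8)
The plan is to follow the proof of Proposition~\ref{prop:mGFF-spher}; the only genuinely new feature is that the conditioning event is now a \emph{product} of spheres,
\begin{align*}
    E \coloneqq \bigl\{\bfPsi \in (\R^N)^{n^d}:\ \norm{\Psi_x} = \sqrt N\ \text{for all } x\in\Lambda\bigr\} = \prod_{x\in\Lambda} \sqrt N\,\Ss^{N-1},
\end{align*}
a smooth submanifold of $(\R^N)^{n^d}$ of codimension $n^d$, so the disintegration has to be carried out site by site rather than through a single polar decomposition. I would first record that, writing $X = \bfPhi/\sqrt\beta$, a linear change of variables gives the Lebesgue density $f_X(\bfPsi)\propto\exp\bigl(-\tfrac{\beta}{4}\sum_{x\sim y}\norm{\Psi_x-\Psi_y}^2 - \tfrac{\beta m^2}{2}\sum_{x\in\Lambda}\norm{\Psi_x}^2\bigr)$. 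Then, introducing for each $x\in\Lambda$ separately polar coordinates $\Psi_x = \rho_x\,\omega_x$ with $\rho_x = \norm{\Psi_x}\in(0,\infty)$ and $\omega_x\in\Ss^{N-1}$, so that $\prod_{x\in\Lambda}\d^N \Psi_x = \prod_{x\in\Lambda} \rho_x^{N-1}\,\d\rho_x\,\lambda_{\Ss^{N-1}}(\d\omega_x)$ with $\lambda_{\Ss^{N-1}}$ the surface measure on $\Ss^{N-1}$, the law of $X$ in these coordinates has density $\bigl(\prod_{x}\rho_x^{N-1}\bigr)\,f_X\bigl((\rho_x\omega_x)_{x}\bigr)$, a jointly continuous and strictly positive function of $\bigl((\rho_x)_x,(\omega_x)_x\bigr)$. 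In particular the marginal density of the radii $(\rho_x)_{x\in\Lambda}$ is continuous and positive at $(\sqrt N,\dots,\sqrt N)$, the regular conditional law of $X$ given $E = \{\rho_x = \sqrt N\ \text{for all }x\}$ is well-defined, and — exactly as in the scalar case — it is obtained by fixing $\rho_x\equiv\sqrt N$ in the joint density and renormalising; the now-constant Jacobians $\rho_x^{N-1}$ merely turn the surface measures $\lambda_{\Ss^{N-1}}(\d\omega_x)$ into the uniform measures $\lambda_{\sqrt N\,\Ss^{N-1}}(\d\Psi_x)$ on $\sqrt N\,\Ss^{N-1}$.

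It then remains to evaluate the numerator on $E$, and here the computation collapses. On $E$ one has $\sum_{x\in\Lambda}\norm{\Psi_x}^2 = N n^d$, and since $\norm{\Psi_x-\Psi_y}^2 = \norm{\Psi_x}^2 + \norm{\Psi_y}^2 - 2\,\Psi_x\cdot\Psi_y$, the quantity $\sum_{x\sim y}\bigl(\norm{\Psi_x}^2+\norm{\Psi_y}^2\bigr)$ is constant on $E$ as well; hence, restricted to $E$,
\begin{align*}
    -\frac{\beta}{4}\sum_{x\sim y}\norm{\Psi_x-\Psi_y}^2 - \frac{\beta m^2}{2}\sum_{x\in\Lambda}\norm{\Psi_x}^2 = C_{\beta,N,n,m^2} + \frac{\beta}{2}\sum_{x\sim y}\Psi_x\cdot\Psi_y,
\end{align*}
with a constant $C_{\beta,N,n,m^2}$ that is absorbed into the normalisation. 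Therefore the conditional law of $X = \bfPhi/\sqrt\beta$ given $E$ has density proportional to $\exp\bigl(\tfrac{\beta}{2}\sum_{x\sim y}\Psi_x\cdot\Psi_y\bigr)\prod_{x\in\Lambda}\lambda_{\sqrt N\,\Ss^{N-1}}(\d\Psi_x)$, which is precisely $\mu_{\Lambda,N,\beta}$ of Definition~\ref{def:spinO(N)}.

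As in Proposition~\ref{prop:mGFF-spher}, I expect the only real content to be the measure-theoretic point — conditioning on the null event $E$ — rather than the algebra, which is trivial. Here it calls for the per-site polar decomposition above rather than a single one, but once the joint density of the radii $(\rho_x)_{x\in\Lambda}$ is seen to be continuous and strictly positive at $(\sqrt N,\dots,\sqrt N)$ this is just the standard disintegration of a smooth density along a coordinate slice, and the rest is the elementary cancellation displayed above. Alternatively one could simply invoke the standard disintegration of a non-degenerate Gaussian measure along a smooth submanifold of finite codimension, which yields the same formula.
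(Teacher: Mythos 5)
Your argument is correct and is exactly the ``fully analogous'' adaptation of the proof of Proposition \ref{prop:mGFF-spher} that the paper has in mind when it omits this proof: a per-site polar decomposition in place of the single one, followed by the observation that $\sum_{x\sim y}(\norm{\Psi_x}^2+\norm{\Psi_y}^2)$ and $\sum_x\norm{\Psi_x}^2$ are constant on the conditioning set, so only the cross term $\frac{\beta}{2}\sum_{x\sim y}\Psi_x\cdot\Psi_y$ survives. Nothing further is needed.
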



\subsection{Preliminary estimates on the Green's functions}
\label{subsec:Greens_fcts}
This subsection mainly recalls but also proves some further results about Green's functions and their eigenvalues. We start off with the massive Green's function and then turn to the zero-average Green's function. 

\subsubsection{Massive Green's function}
We begin with some canonical results, whose proofs can be found, for example, in \cite[Section 1]{Rod17} and \cite[Proposition 8.30]{friedli_velenik_2017}.  
\begin{lemma}[Properties of massive Green's function]
\label{lemma:prop:lapl,grfct}
    Let $n \in \N$, $G = \T^d_n$ or $\Z^d$, and $U$ be a subset of $G$, possibly $U = \emptyset$. Let $m^2\geq 0$ if $U \neq \emptyset$ or $G = \Z^d$ for $d\geq 3$ and $m^2 > 0$ otherwise. 
    \begin{enumerate}
        \item For any finite subset $K \subset G\setminus U \eqqcolon U^c$,
        \begin{align}
        \label{eq:Gfct_decomp}
            G_{U^c, m^2}(x,y) = G_{(U\cup K)^c, m^2} + \E_{\alpha, U}^x[ \ind_{\{H_K < \infty\}} G_{U^c, m^2}(X_{H_K},y)]
        \end{align}
        with the notation of \eqref{eq:Gfct_RW}.
        \item If $m^2>0$, there exist two constants $c, C> 0$ depending only on $m^2$ and $d$ such that 
        \begin{align}
        \label{eq:Gfct_expdec}
            G_{U^c, m^2}(x,y) \leq C e^{-c \lvert x-y\rvert}, \quad\text{for all }\, U\subset G, x, y \in G. 
        \end{align}
    \end{enumerate}
\end{lemma}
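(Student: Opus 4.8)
The statement to be proved is Lemma~\ref{lemma:prop:lapl,grfct}, the two standard properties of the massive Green's function: the decomposition identity \eqref{eq:Gfct_decomp} and the exponential decay \eqref{eq:Gfct_expdec}. Both are classical, so the plan is to derive them cleanly from the random-walk representation \eqref{eq:Gfct_RW} rather than to invoke potential-theoretic machinery.

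For part~(1), I would work with the killed random walk $\mathbf X$ under $\P^x_{\alpha,U}$ whose Green's function is $G_{\alpha,U}=2d\,G_{U^c,m^2}$. The idea is a first-entrance (last-exit would also work, but first-entrance is cleaner here) decomposition at the finite set $K\subset U^c$: starting from $x$, either the walk is killed (or absorbed in $U$) before ever hitting $K$, or it hits $K$ at some first time $H_K<\infty$, and from that point the strong Markov property restarts the walk afresh. Summing the occupation-time representation $G_{\alpha,U}(x,y)=\sum_{k\ge 0}\P^x_{\alpha,U}[X_k=y]$ over these two disjoint events gives $G_{\alpha,U}(x,y)=G_{\alpha,U\cup K}(x,y)+\E^x_{\alpha,U}\big[\ind_{\{H_K<\infty\}}G_{\alpha,U}(X_{H_K},y)\big]$, since killing the walk additionally at $K$ produces exactly the walk with Green's function $G_{\alpha,U\cup K}$ for the part of the trajectory before $H_K$, and the post-$H_K$ contribution is $G_{\alpha,U}$ evaluated at the entrance point. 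Dividing by $2d$ yields \eqref{eq:Gfct_decomp}. One should check the bookkeeping at $K$ itself (the walk may sit on $K$), but with $K\subset U^c$ and the strong Markov property this is routine.

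For part~(2), assume $m^2>0$, so $\alpha=m^2/(2d+m^2)\in(0,1)$ is bounded away from $0$ and $1$ by a constant depending only on $m^2,d$. From the last line of \eqref{eq:Gfct_RW}, $G_{U^c,m^2}(x,y)\le \frac{1}{2d}\sum_{k\ge 0}(1-\alpha)^k\,\P^x_0[X_k=y]$, and since a simple random walk needs at least $\d(x,y)\ge$ (graph distance) $\ge c_d|x-y|$ steps to go from $x$ to $y$ (with $c_d$ a dimensional constant, because each step moves the Euclidean coordinate by at most $1$), the sum is bounded by $\frac{1}{2d}\sum_{k\ge \d(x,y)}(1-\alpha)^k=\frac{(1-\alpha)^{\d(x,y)}}{2d\,\alpha}\le C e^{-c|x-y|}$ with $c=c_d\log\frac{1}{1-\alpha}>0$ and $C=\frac{1}{2d\alpha}$. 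Here I used $\P^x_0[X_k=y,\,k<H_U]\le\P^x_0[X_k=y]$, which is the only place $U$ enters and makes the bound uniform in $U$. On the torus one should note $|x-y|$ is the Euclidean distance of representatives, and $\d(x,y)\ge |\hat x-\hat y|$ still holds for the canonical representatives, so the estimate transfers verbatim; I would remark this to cover the $\Lambda=\T^d_n$ case.

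I do not anticipate a genuine obstacle here; the content is standard and both pieces are a few lines given \eqref{eq:Gfct_RW}. The only mild care needed is (i) getting the strong Markov decomposition at $K$ exactly right in part~(1), in particular the treatment of the walk revisiting $K$, which the identity handles automatically because $G_{\alpha,U}(X_{H_K},\cdot)$ already counts all further returns; and (ii) in part~(2) being careful that the constants $c,C$ depend only on $m^2$ and $d$ and not on $U$, $n$, or the base point, which is immediate from the computation above. Since the excerpt explicitly says the proofs ``can be found'' in \cite[Section~1]{Rod17} and \cite[Proposition~8.30]{friedli_velenik_2017}, it would also be entirely legitimate to keep the write-up brief and simply indicate these two arguments.
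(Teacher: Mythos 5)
Your proposal is correct: the first-entrance (strong Markov) decomposition at $K$ for part (1) and the elementary bound $G_{U^c,m^2}(x,y)\le \frac{1-\alpha}{2d}\sum_{k\ge \d(x,y)}(1-\alpha)^k$ for part (2) are exactly the standard arguments, and your constants indeed depend only on $m^2$ and $d$. The paper itself gives no proof but simply cites \cite[Section 1]{Rod17} and \cite[Proposition 8.30]{friedli_velenik_2017}, so your sketch matches the intended (referenced) route.
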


We will frequently find it helpful to calculate with the basis of the eigenfunctions of the massive Green's function on $\T^d_n$. 
\begin{lemma}[Eigenvalues of massive Green's function]
\label{lemma:massive_EVs}
    Let $n \in \N, m^2 > 0$, and $U$ be a finite subset of $\Z^d$ viewed as a subset of $\Lambda_n$ for each $n$. Set $U^c \coloneqq \Lambda_n \setminus U$ and $u_n \coloneqq \lvert U^c\rvert$.
    \begin{enumerate}
        \item $-\Delta_{\Lambda_n}$ has one zero eigenvalue $\eta_0 = 0$ and $(n^d-1)$ positive eigenvalues given by 
        \begin{align*}
            \eta_{w} = 2 \sum_{i=1}^d \Big(1-\cos\Big( 2\pi \frac{w_i}{n} \Big)\Big) \; \text{ for } w=(w_i)_{i=1}^d \in [0, n)^d \cap \Z^d \setminus \{0\}.
        \end{align*}
        The corresponding eigenvectors $(q^w)_{w\in [0,n)^d \cap \Z^d}$ generating an orthonormal system are given by
        \begin{align*}
            q^w_x = \frac{1}{\sqrt{n^d}} \prod_{i=1}^d \Big(\cos\Big( 2\pi \frac{x_i w_i}{n}\Big) + \sin\Big( 2\pi \frac{x_i w_i}{n}\Big)\Big) \;\text{ for all } x \in [0, n)^d \cap \Z^d.
        \end{align*}
        \item Let $(\eta_k)_{k \geq 2}$ \footnote{Here and in the sequel, we slightly abuse notation and write both $(\eta_k)_{k=1}^{n^d}$ (when ordered non-decreasingly) and $(\eta_x)_{x \in [0, n)^d \cap \Z^d}$ (when indexed by the domain) for the eigenvalues of $-\Delta_{\Lambda_n}$\label{footnote:abuse_EVs}} be the positive eigenvalues of $-\Delta_{\Lambda_n}$ ordered non-decreasingly. Then, 
        \begin{equation} \label{eq:EV_asympt}
            \eta_{k} = \Theta\Big( \frac{k^{2/d}}{n^2} \Big).
        \end{equation}
        \item The eigenvalues of $G_{\Lambda_n, m^2}$ are given by $(1/(m^2 + \eta_x))_{x \in [0,n)^d \cap \Z^d}$, while the eigenvalues $(\tau_i)_{i=1}^{u_n}$ of $G_{U^c, m^2}\vert_{U^c \times U^c}$ ordered non-increasingly satisfy 
        \begin{align} \label{eq:unif_bdsEVs}
            \tau_i \in \bigg[\frac{1}{m^2 + 4d}, \frac{1}{m^2}\bigg] \quad \text{for all } 1\leq i \leq u_n.
        \end{align}
        \end{enumerate}
\end{lemma}
\begin{proof}
    The eigenvalues of $-\Delta_{\Lambda_n}$ can be found, for instance, in \cite[Example 5.17]{aldous-fill-2014} or easily derived from \cite[Appendix A]{SpherModel}; the eigenvectors are also classical.
    
    Using explicit formulae for $(\eta_x)_{x \in [0, n/2]^d \cap \Z^d}$ (any remaining $x$ gives an eigenvalue coinciding with one indexed by some point in $[0, n/2]^d \cap \Z^d$), we obtain $\eta_x = \Theta(\lvert x\rvert^2/n^2)$. This together with symmetries of the torus and properties of $\cos$-function implies that $\eta_{k} = \Theta \big( k^{2/d}/n^2 \big)$. 
    
    The third point follows immediately since $G_{\Lambda_n, m^2}$ is the inverse of $-\Delta_{\Lambda_n} + m^2$, while $G_{U^c, m^2}\vert_{U^c \times U^c}$ is the inverse of $(-\Delta_{U^c} + m^2)\vert_{U^c \times U^c}$—see discussion following \eqref{eq:discrGaussform}. From \eqref{eq:discrGaussform}, one furthermore sees that the smallest eigenvalue of $(-\Delta_{U^c} + m^2)\vert_{U^c \times U^c}$ is at least $m^2$, while the largest is upper bounded by $4d + m^2$.
\end{proof}
\begin{remark}
\label{rem:EV_indexing}
    Note that equivalently the eigenvalues and the corresponding eigenfunctions as well as their domains can be indexed by $[-n/2, n/2)^d\cap \Z^d$ instead of $[0,n)^d\cap \Z^d$ due to the invariance of cosine functions under translation of the argument by $2\pi \Z$. More precisely, for $w,x \in [0,n)^d\cap \Z^d$, $\eta_w = \eta_{\tilde w}$ and $q^w_x = q^{\tilde w}_{\tilde x}$, where $\tilde y_i = y_i$ if $y_i < n/2$ and $y_i - n$ otherwise for $y = x, w$. 
\end{remark}

\subsubsection{The zero-average Green's function}
We now proceed to the results on the zero-average Green's functions corresponding to the covariance structure of our zero-average GFF on $\Lambda_n = \T^d_n$. We start by recalling some known results stemming from \cite{Aba17}:
\begin{proposition}[Properties of the zero-average Green's function]
\label{prop:props:zeroAvGfct}
    Let $d \geq 3$, $n \in \N$. The zero-average Green's function $G_{\Lambda_n}^{0\text{-}\mathrm{avg.}}$ satisfies the following:
    \begin{enumerate}
        \item For any $y \in \Lambda_n$, $\sum_{x \in \Lambda_n} G_{\Lambda_n}^{0\text{-}\mathrm{avg.}}(x,y) = 0$.
        \item Let $\overline{X} = (\overline{X}_t)_{t \geq 0}$ be a continuous-time simple random walk on $\Lambda_n$. We denote its law when started at $x$ by $\P_{\Lambda_n}^x$. Then, for all $x,y \in \Lambda_n$,
        \begin{equation*}
            G_{\Lambda_n}^{0\text{-}\mathrm{avg.}}(x,y) = \frac{1}{2d} \int_0^\infty  \Big( \P^x_{\Lambda_n}[\overline{X}_t = y] - \frac{1}{n^d} \Big) \d t. 
        \end{equation*}
        \item For any $U \subsetneq \Lambda_n$, $x,y \in \Lambda_n$,
        \begin{align}
        \label{eq:0-avGfct_decomp}
            G_{\Lambda_n}^{0\text{-}\mathrm{avg.}}(x,y) = G_{U^c, 0}(x,y) + \E_{0}^x \big[ G_{\Lambda_n}^{0\text{-}\mathrm{avg.}}(X_{H_U},y)\big] - \frac{1}{2d n^d} \E_{0}^x[ H_U],
        \end{align}
        where $H_U= \inf\{n \geq 0: X_n \in U\}$ is the first hitting time of $U$ by a simple random walk $(X_k)_{k \in \N_0} \sim \P^x_{0}$ on $\Lambda_n$ started at $X_0 = x$.
        \item $G_{\Lambda_n}^{0\text{-}\mathrm{avg.}}$ converges to $G_{\Z^d}$ uniformly over compact subsets of $\Z^d$ (viewed also as subsets of $\Lambda_n$).
        \item There exists $C>0$ such that for all $x, y \in \Lambda_n$ and $n \in \N$ sufficiently large,
        \begin{align}
            \label{eq:zero-avGfct_polydec}
            \big\lvert G_{\Lambda_n}^{0\text{-}\mathrm{avg.}}(x,y) \big\rvert \leq C (\log n)^{3d/2} \d(x,y)^{2-d}. 
        \end{align}
        Recall that $\d(\cdot,\cdot)$ stands for the graph-distance on $\Lambda_n$.
       \end{enumerate}
\end{proposition}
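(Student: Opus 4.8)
The plan is to read parts (1) and (2) directly off the spectral description of $G_{\Lambda}^{0\text{-avg.}}$ recorded just above the statement, to obtain (3) from the strong Markov property applied to the continuous-time heat kernel, and to treat (4) and (5) together by unfolding the torus walk onto $\Z^d$ (method of images) and splitting the time integral of (2) at times of order $n^2$. For (1), write $G_{\Lambda}^{0\text{-avg.}} = Q\,\mathrm{diag}\big(0,(\eta_k^{-1})_{k\ge 2}\big)Q^T$ with $Q$ orthogonal and first column $u^1 = n^{-d/2}\mathbf 1_\Lambda$; then $Q^T\mathbf 1_\Lambda$ is proportional to the first basis vector, which the diagonal matrix annihilates, so $G_{\Lambda}^{0\text{-avg.}}\mathbf 1_\Lambda = 0$. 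For (2), let $p^\Lambda_t(x,y)\coloneqq\P^x_\Lambda[\overline{X}_t=y]$; its generator is $\frac{1}{2d}\Delta_\Lambda$ (matching the prefactor in the claimed formula), so $p^\Lambda_t = \sum_k e^{-\eta_k t/(2d)}u^k(u^k)^T$, the $k=1$ term being $n^{-d}\mathbf 1_\Lambda\mathbf 1_\Lambda^T = \lim_{t\to\infty}p^\Lambda_t$. Subtracting it, using $\int_0^\infty e^{-\eta_k t/(2d)}\,\d t = 2d/\eta_k$ for $k\ge 2$ and interchanging the finite sum with the integral gives $\frac{1}{2d}\int_0^\infty\big(p^\Lambda_t(x,y)-n^{-d}\big)\d t = \sum_{k\ge 2}\eta_k^{-1}u^k_x u^k_y = G_{\Lambda}^{0\text{-avg.}}(x,y)$.

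For (3), fix $U\subsetneq\Lambda$ nonempty and let $\tau_U$ be the first hitting time of $U$ by $\overline{\mathbf X}$. Decomposing over $\{\tau_U>t\}$ and $\{\tau_U\le t\}$ and applying the strong Markov property at $\tau_U$ on the second event gives, for the \emph{deterministic} kernels,
\begin{equation*}
    p^\Lambda_t(x,y) - n^{-d} = p^{U^c}_t(x,y) - n^{-d}\,\P^x_\Lambda[\tau_U>t] + \E^x_\Lambda\big[\ind_{\{\tau_U\le t\}}\big(p^\Lambda_{t-\tau_U}(\overline{X}_{\tau_U},y)-n^{-d}\big)\big],
\end{equation*}
where $p^{U^c}_t$ is the heat kernel killed on hitting $U$. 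Integrating over $t\in(0,\infty)$: the first term integrates to $2d\,G_{U^c,0}(x,y)$; the second to $n^{-d}\E^x_\Lambda[\tau_U]=n^{-d}\E^x_0[H_U]<\infty$ (the equality because each step of the jump chain carries a mean-one holding time); and the third, after a Fubini step legitimised by $\sup_z\int_0^\infty|p^\Lambda_s(z,y)-n^{-d}|\,\d s<\infty$, to $\E^x_\Lambda[2d\,G_{\Lambda}^{0\text{-avg.}}(\overline{X}_{\tau_U},y)] = 2d\,\E^x_0[G_{\Lambda}^{0\text{-avg.}}(X_{H_U},y)]$ using (2) and the fact that the jump chain of $\overline{\mathbf X}$ is a simple random walk. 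Dividing by $2d$ yields \eqref{eq:0-avGfct_decomp}.

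For (4) and (5) the main input is the method of images $p^\Lambda_t(x,y)=\sum_{z\in\Z^d}p^{\Z^d}_t(\hat{x},\hat{y}+nz)$ together with the Gaussian upper bound $p^{\Z^d}_t(u,v)\le C t^{-d/2}e^{-c|u-v|^2/t}$ and the formula from (2). For (4), subtract $2d\,G_{\Z^d}(x,y)=\int_0^\infty p^{\Z^d}_t(x,y)\,\d t$ and split the time integral at $T=\varepsilon n^2$: on $(0,T]$ the images with $z\ne0$ are $O(e^{-cn^2/t})$ and integrate to $O(n^{2-d})$, while $\int_0^T n^{-d}\,\d t = \varepsilon n^{2-d}$; on $(T,\infty)$ both $\int_T^\infty p^{\Z^d}_t(x,y)\,\d t$ and $\int_T^\infty|p^\Lambda_t(x,y)-n^{-d}|\,\d t$ are $O(n^{2-d})$, the latter via the spectral gap $\eta_2=\Theta(n^{-2})$ together with $|u^k_x u^k_y|\le Cn^{-d}$. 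For $d\ge 3$ all pieces vanish as $n\to\infty$, uniformly over $x,y$ in a fixed compact, which is (4) (with rate $O(n^{2-d})$). For (5), feed the same decomposition into (2): for $t\le n^2$ the principal image contributes $\le C t^{-d/2}e^{-c\,\d(x,y)^2/t}$, integrating to $\le C'\d(x,y)^{2-d}$, the other images integrate to $O(n^{2-d})\le C''\d(x,y)^{2-d}$ (since $\d(x,y)\le dn/2$), and for $t\ge n^2$ comparing the Gaussian sum $\sum_z e^{-cn^2|z|^2/t}$ with its integral shows that $|p^\Lambda_t(x,y)-n^{-d}|$ integrates to $O(n^{2-d})$; this actually gives $|G_{\Lambda}^{0\text{-avg.}}(x,y)|\le C\,\d(x,y)^{2-d}$, which is stronger than \eqref{eq:zero-avGfct_polydec}, so one may instead simply invoke \cite{Aba17}.

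The recurring subtlety throughout (2)--(5) is the additive constant $n^{-d}$. One cannot pull $\int_0^\infty$ inside the expectation pathwise, since the occupation measure of the (recurrent) torus walk is infinite; so in (3) the $n^{-d}$ must be split as $n^{-d}\P^x[\tau_U>t]+n^{-d}\P^x[\tau_U\le t]$ and the second piece recombined with the post-$\tau_U$ term before integrating, and in (4)--(5) this same constant is exactly what cancels the large-time mass of the unfolded Gaussian sum, so the estimate hinges on quantifying that cancellation via the spectral gap (or Poisson summation). Establishing the Gaussian heat-kernel upper bound on the torus uniformly in $n$ -- equivalently, controlling the image sum -- is the one genuinely analytic ingredient; everything else reduces to the spectral theorem and the strong Markov property.
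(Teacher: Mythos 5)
The paper offers no proof of this proposition at all -- it is stated as a recollection of results from \cite{Aba17} -- so the relevant comparison is with the arguments there (which the paper's Appendix \ref{A:zero_avg_polydecay} partly reproduces, with sharper error control, for Proposition \ref{prop:improved}). Your proposal is essentially correct and is a legitimately different route for (4)--(5): where \cite{Aba17} and the appendix work with the discrete-time jump chain, concentrate the Poisson jump count, and invoke the local CLT with its error term, you use the exact continuous-time unfolding $p^\Lambda_t(x,y)=\sum_{z\in\Z^d}p^{\Z^d}_t(\hat x,\hat y+nz)$ together with the spectral representation of $p^\Lambda_t-n^{-d}$. Parts (1)--(3) are clean; in (3) you correctly isolate the one real trap (the constant $n^{-d}$ cannot be integrated pathwise against the recurrent walk) and the Wald step identifying $\E^x_\Lambda[\tau_U]$ with $\E^x_0[H_U]$ is right.

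Two points in (4)--(5) need more care than you give them. First, the bound $p^{\Z^d}_t(u,v)\le Ct^{-d/2}e^{-c|u-v|^2/t}$ is \emph{false} in the Poissonian regime $t\ll|u-v|$ (there $p_t$ decays only like $e^{-c|u-v|\log(|u-v|/t)}$, which is larger than the Gaussian expression as $t\downarrow 0$); you must use the standard two-regime heat-kernel bound, after which the off-principal images still contribute $O(ne^{-cn})$ for $t\le n|z|$ and the argument survives unchanged. Second, for the tail $\int_T^\infty|p^\Lambda_t(x,y)-n^{-d}|\,\d t$ with $T=\varepsilon n^2$, the spectral gap alone is not enough: bounding $\sum_{k\ge2}e^{-\eta_k t/(2d)}$ by $n^d e^{-\eta_2 t/(2d)}$ yields only $O(n^2)$ after integration. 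You need the full Weyl-type count $\eta_k=\Theta(k^{2/d}/n^2)$ (the paper's Lemma \ref{lemma:massive_EVs}(2)), which gives $\sum_{k\ge2}\eta_k^{-1}e^{-\eta_k T/(2d)}=O_\varepsilon(n^2)$ and hence, with $|u^k_xu^k_y|\le 2^dn^{-d}$, the desired $O(n^{2-d})$; the alternative is to push the cutoff to $t_*\asymp n^2\log n$ as in \cite{Aba17}, which is precisely what produces the $(\log n)^{3d/2}$ in \eqref{eq:zero-avGfct_polydec}. With these repairs your route does give the log-free bound $|G^{0\text{-avg.}}_\Lambda(x,y)|\le C\,\d(x,y)^{2-d}$ and the rate $O(n^{2-d})$ in (4) on compacts -- consistent with, though not a substitute for, Proposition \ref{prop:improved}, whose appendix proof is long mainly because the $d=3$ sign argument needs an explicit numerical constant rather than an $O(\cdot)$.
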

For most of the results of this paper, the bound \ref{eq:zero-avGfct_polydec} would have been sufficient, however, not for the critical regime of the spherical model in dimensions $3$ and $4$. Therefore, we establish the improved estimates. 
\begin{proposition}[Finer estimates for the zero-average Green's function]
\label{prop:improved}
Let $d \geq 3, n \in \N$. The zero-average Green's function $G_{\Lambda_n}^{0\text{-}\mathrm{avg.}}$ satisfies the following:
    \begin{enumerate}
     \item There exists $C = C(d)>0$ such that for all $n$ sufficiently large and $y \in [-n/2, n/2)^d \cap \Z^d$, 
        \begin{align}
        \label{eq:0avgGfct_polydec}
            \lvert G_{\Lambda_n}^{0\text{-}\mathrm{avg.}}(0,y) - G_{\Z^d}(0,y) \rvert \leq C n^{2-d}.
        \end{align}
    In particular, for $\lvert y\rvert = \Theta(n)$ as above, $G_{\Lambda_n}^{0\text{-}\mathrm{avg.}}(0,y) = \mathcal{O}(n^{2-d})$. 
    \item  For $d=3$ and any $x \in \Z^d$,
        \begin{align}
            \label{eq:negativity_Greens_diff}
            G^{0\text{-}\mathrm{avg.}}_{\Lambda_n}(x,x) - G_{\Z^d}(x,x) = \Theta(n^{2-d}) < 0 \quad \text{uniformly for large $n$}.
        \end{align}
    \end{enumerate}
\end{proposition}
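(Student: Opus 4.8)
\emph{Part 1.} I would compare the two Green's functions through their heat‑kernel representations. Writing $p^{\Z^d}_t$ for the continuous‑time simple‑random‑walk kernel on $\Z^d$ and $p^\Lambda_t$ for the one on $\Lambda=\T^d_n$, the torus walk is the image under $\pi^d_n$ of the $\Z^d$‑walk, so $p^\Lambda_t(0,y)=\sum_{k\in\Z^d}p^{\Z^d}_t(0,y+nk)$ for $y\in\hat\Lambda$ (identified with $\pi^d_n(y)$); combined with Proposition~\ref{prop:props:zeroAvGfct}(2) and its $\Z^d$‑analogue $G_{\Z^d}(0,y)=\frac{1}{2d}\int_0^\infty p^{\Z^d}_t(0,y)\,\d t$ this gives
\begin{align*}
G^{0\text{-avg.}}_\Lambda(0,y)-G_{\Z^d}(0,y)=\frac{1}{2d}\int_0^\infty\Big(\sum_{k\in\Z^d\setminus\{0\}}p^{\Z^d}_t(0,y+nk)-\frac{1}{n^d}\Big)\d t.
\end{align*}
The plan is to split at $t=n^2$. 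On the tail $t\ge n^2$, the spectral‑gap bound $\eta_w\gtrsim|w|^2/n^2$ of Lemma~\ref{lemma:massive_EVs} yields $|p^\Lambda_t(0,y)-n^{-d}|\le Cn^{-d}\sum_{w\neq0}e^{-ct\eta_w}\le Cn^{-d}e^{-ct/n^2}$, while the on‑diagonal bound gives $p^{\Z^d}_t(0,y)\le p^{\Z^d}_t(0,0)\le Ct^{-d/2}$; since the integrand equals $(p^\Lambda_t(0,y)-n^{-d})-p^{\Z^d}_t(0,y)$, the tail contributes $O(n^{2-d})$. For $t\le n^2$, the key observation is that $|y+nk|\gtrsim n|k|$ for $k\neq0$ and $y\in\hat\Lambda$, so Gaussian heat‑kernel upper bounds together with the cutoff $t\le n^2$ give $\int_0^{n^2}p^{\Z^d}_t(0,y+nk)\,\d t\le C(n|k|)^{2-d}e^{-c|k|^2}$, whence $\sum_{k\neq0}\int_0^{n^2}p^{\Z^d}_t(0,y+nk)\,\d t=O(n^{2-d})$, and $\int_0^{n^2}n^{-d}\,\d t=n^{2-d}$. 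Summing, $|G^{0\text{-avg.}}_\Lambda(0,y)-G_{\Z^d}(0,y)|\le Cn^{2-d}$; the ``in particular'' statement then follows from the standard bound $G_{\Z^d}(0,y)\le C|y|^{2-d}$.

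\emph{Part 2.} By torus translation invariance one may take $x=0$, and \eqref{eq:0avgGfct_polydec} already gives $|G^{0\text{-avg.}}_\Lambda(0,0)-G_{\Z^3}(0,0)|\le Cn^{-1}$; it remains to pin down the sign and a matching lower bound. I would pass to the Fourier side: setting $\hat\lambda(\xi)\coloneqq 2\sum_{i=1}^{3}(1-\cos\xi_i)$, $h\coloneqq 2\pi/n$, $\xi_w\coloneqq 2\pi w/n$, $C_w\coloneqq\xi_w+[-h/2,h/2]^3$, Lemma~\ref{lemma:massive_EVs} and the computation after Definition~\ref{def:zeroavGFF} give
\begin{align*}
G^{0\text{-avg.}}_\Lambda(0,0)-G_{\Z^3}(0,0)&=\frac{1}{n^3}\sum_{w\in\hat\Lambda\setminus\{0\}}\frac{1}{\hat\lambda(\xi_w)}-\frac{1}{(2\pi)^3}\int_{[-\pi,\pi]^3}\frac{\d\xi}{\hat\lambda(\xi)}\\
&=-\frac{1}{(2\pi)^3}\int_{C_0}\frac{\d\xi}{\hat\lambda(\xi)}+\frac{1}{(2\pi)^3}\sum_{w\neq0}\Big(\frac{h^3}{\hat\lambda(\xi_w)}-\int_{C_w}\frac{\d\xi}{\hat\lambda(\xi)}\Big),
\end{align*}
i.e.\ a midpoint Riemann sum for $1/\hat\lambda$ minus its integral, with the cell $C_0$ carrying the singularity split off. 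Since $\hat\lambda(\xi)\asymp|\xi|^2$ near the origin one has $\int_{C_0}\hat\lambda^{-1}\asymp 1/n$, so the first term is $-\Theta(1/n)<0$. The subtlety — which the paper rightly calls ``surprisingly tricky'' — is that the quadrature‑error sum is itself only $O(n^{-1})$: the $w$‑th cell contributes $\lesssim n^{-1}|w|^{-4}$, summable precisely because $\sum_{w\neq0}|w|^{-4}<\infty$ in $d=3$ (which is exactly why $d=3$ is the borderline, cleanly treatable dimension), so it is of the same order as the negative singular term and its sign cannot be discarded. The route I would follow is to extract the leading midpoint correction $-\frac{h^5}{24}\sum_{w\neq0}\Delta(1/\hat\lambda)(\xi_w)$, note that $\Delta(1/\hat\lambda)(\xi)=2(4-d)|\xi|^{-4}+O(|\xi|^{-2})$ is \emph{positive} near the origin precisely for $d=3$, so this correction is again $-\Theta(1/n)$ and reinforces the $C_0$‑term, and then control the genuine remainder — where the Taylor expansion of the singular $1/\hat\lambda$ stops being self‑truncating — by a careful direct lattice estimate exploiting the explicit form of $\hat\lambda$ (this is where Madelung‑type lattice constants enter and where essentially all the remaining work sits). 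Once the sign $G^{0\text{-avg.}}_\Lambda(0,0)-G_{\Z^3}(0,0)<0$ with $|G^{0\text{-avg.}}_\Lambda(0,0)-G_{\Z^3}(0,0)|\ge cn^{-1}$ is in hand, the $\Theta(n^{-1})$ claim follows; equivalently, one may package the negative contribution via the decomposition \eqref{eq:0-avGfct_decomp} with $U=\T^3_n\setminus B$ for a ball $B$ of radius $\Theta(n)$, in which the always‑negative term $-\frac{1}{2dn^d}\E^0_0[H_U]=-\Theta(n^{-1})$ plays the role of the singular cell and the leftover expectation needs Part~1 together with the same fine estimate.

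The first part is routine modulo standard heat‑kernel bounds; the genuine obstacle is the sign in Part~2, where after removing an explicit negative term of order $n^{-1}$ one is left with a residual lattice sum of the same order — a Madelung‑type constant of a priori unclear sign — and the crux is to show, directly on the lattice, that this residual term cannot overturn the negative sign.
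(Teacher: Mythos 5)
For part~1 your route is genuinely different from the paper's and would work. The paper Poissonizes the continuous-time kernel, splits the integral at $t_*\asymp n^2\log n$, and controls the bulk via the discrete-time local CLT (Lawler's error terms $E_k$) followed by a rather delicate Riemann-sum and Taylor analysis of the periodized Gaussian; you instead unfold $p^\Lambda_t(0,y)=\sum_{k}p^{\Z^d}_t(0,y+nk)$, split at $t=n^2$, and use the spectral gap for the tail and off-diagonal heat-kernel upper bounds for the images $k\neq 0$. Your version is shorter and bypasses the local CLT entirely; the only point requiring care is the crossover between the Gaussian and Poissonian regimes of $p^{\Z^d}_t(0,z)$ for $|z|\gg t$, which works out because $|y+nk|\gtrsim n|k|$ dominates $|k|^2$ in the relevant range, so the sum over $k\neq 0$ is indeed $O(n^{2-d})$. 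Both approaches deliver the same bound.

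For part~2 there is a genuine gap. You diagnose the structure of the difficulty correctly: after extracting an explicit negative contribution of order $n^{-1}$ (your singular Fourier cell $C_0$, or equivalently the terms $-\frac{1}{2dn^d}\E^0_0[H_U]$ and $-\E^{0,\Z^d}_0[G_{\Z^3}(X_{H_U},0)]$ in the decomposition \eqref{eq:0-avGfct_decomp}), one is left with a residual lattice quantity of the \emph{same} order whose sign is a priori unclear, and you rightly note that the midpoint-rule expansion of $1/\hat\lambda$ does not truncate, since every order contributes at $n^{-1}$ near the singularity. But your argument stops exactly where the work begins: ``control the genuine remainder by a careful direct lattice estimate'' is the content of the claim, not a step of its proof. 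In the paper this is the whole of Appendix~\ref{A:3D_bound}: taking $U$ to be the boundary of the fundamental box reduces the sign to the explicit uniform bound $n\,G^{0\text{-avg.}}_\Lambda(0,y)\le \frac{1-0.04\pi+4\pi\log(3/2)+\pi e^{-1}}{(2\pi)^2}+c(\varepsilon)$ for $y\in\hat U$, which is then checked numerically against $\inf_r\big(C(3)/r+r^2/(6n^3)\big)=\frac{3^{2/3}}{2(4\pi)^{2/3}}\,n^{-1}$; establishing that bound requires several pages of theta-function and hyperbolic-series manipulations with explicit constants. Your observation that $\Delta(1/\hat\lambda)\sim 2(4-d)|\xi|^{-4}$ is positive precisely for $d=3$ is a nice heuristic for why the leading quadrature correction reinforces the negativity, but it does not control the non-truncating remainder, so the strict inequality in \eqref{eq:negativity_Greens_diff} remains unproved as written.
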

\begin{proof}
    The proof of the estimate \eqref{eq:0avgGfct_polydec} is rather technical and is given in Appendix \ref{A:zero_avg_polydecay}; it mainly follows the argument verifying \eqref{eq:zero-avGfct_polydec} in \cite[Proposition 1.5]{Aba17}, but with greater precision. 
    
    The inequality \eqref{eq:negativity_Greens_diff} for $d=3$ can be obtained as follows. 
    Let $x = 0$, and recall that $\hat{\Lambda}_n = [-n/2, n/2)^d \cap \Z^d$.
    Consider $\hat U \coloneqq (\partial [-\lfloor n/2\rfloor, \lfloor n/2\rfloor]^d) \cap \Z^d$, and let $U$ be its canonical projection onto $\Lambda_n$. Then, since the random walk on torus killed upon entering $U$ and the one on $\Z^d$ (started in the interior of $U$) killed on $\hat{U}$ have the same law, by \eqref{eq:0-avGfct_decomp} and \eqref{eq:Gfct_decomp}:
    \begin{align*}
        G^{0\text{-}\mathrm{avg.}}_{\Lambda_n} (0,0) &= G_{U^c,0}(0,0) + \E_{0}^0[ G_{\Lambda_n}^{0\text{-}\mathrm{avg.}}(X_{H_U},0)] - \frac{1}{2d n^d} \E_{0}^0 [H_U]\\
        &= G_{\Z^d}(0,0) - \E_{0}^{0,\Z^d} [G_{\Z^d}(X_{H_U},0) \ind_{\{H_U < \infty\}}] +\E_{0}^0[ G_{\Lambda_n}^{0\text{-}\mathrm{avg.}}(X_{H_U},0)] - \frac{1}{2d n^d} \E_{0}^0 [H_U]\\
        &= G_{\Z^d}(0,0) - \E_{0}^{0,\Z^d} [G_{\Z^d}(X_{H_U},0)] +\E_{0}^0[ G_{\Lambda_n}^{0\text{-}\mathrm{avg.}}(X_{H_U},0)] - \frac{1}{2d n^d} \E_{0}^{0,\Z^d} [H_U]
    \end{align*}
    (cf. \cite[(15)]{das2018extremal}). By a standard argument (see e.g. \cite[(1.21)]{lawler2012intersections}) since $\lvert \sum_{k=1}^n X_k\rvert^2 - n$ is a martingale under $\P^{0, \Z^d}_0$, $\E_{0}^{0,\Z^d} [H_U] = \Theta(n^2)$. By \cite[Theorem 1.5.4]{lawler2012intersections} we further know that for $d\geq 3$, 
    \begin{align*}
        G_{\Z^d}(0,x) \sim C(d) \lvert x\rvert^{2-d} \quad \text{ with } \quad C(d) = \frac{1}{2d} \frac{d}{2} \Gamma\Big( \frac{d}{2} - 1\Big) \pi^{-d/2}
    \end{align*}
    Combined with Proposition \ref{A:prop:0avg_bdry_bound}, which states that for all sufficiently large $n$, $G^{0\text{-}\mathrm{avg.}}_{\Lambda_n}(0,y)$ for any $y \in U$ is bounded from above by ${c_*}/{n}$ for some absolute constant $c_* < \frac{3^{2/3}}{2 (4\pi)^{2/3}}$, this implies
    \begin{align*}
        G^{0\text{-}\mathrm{avg.}}_{\Lambda_n}(0,0) - G_{\Z^3}(0,0) &\leq - \inf_{r \in \big[\lfloor \frac{n}{2}\rfloor, \frac{\sqrt{3} n}{2}\big]} \Big( \frac{C(3)}{r} + \frac{1}{6n^3} r^2\Big) + \frac{c_*}{n} = \bigg(c_* - \frac{(3 C(3))^{2/3}}{2}\bigg) \frac{1}{n} \\
        &= -\frac{1}{n} \bigg(\frac{3^{2/3}}{2 (4\pi)^{2/3}} - c_* \bigg)\eqqcolon - \frac{c_{**}}{n}
    \end{align*}
    with an absolute constant $c_{**} > 0$. The infimum is attained at $$r = (3 C(3))^{1/3} n = \Big( \frac{3}{4\pi}\Big)^{1/3} n \in \Big[ \frac{n}{2}, \frac{\sqrt{3} n}{2}\Big].$$ 
\end{proof}

Finally, we discuss the eigenvalues of the submatrices of $G^{0\text{-}\mathrm{avg.}}_{\Lambda_n}$. But first recall that its own eigenvalues are given by $0$ and $(1/\eta_k)_{k= 2, \ldots, n^d}$, where $0 = \eta_1 < \eta_2 \leq \eta_3 \leq \dots \leq \eta_{n^d}$ are the eigenvalues of $-\Delta_{\Lambda_n}$.
\begin{lemma}[Eigenvalues of zero-average Green's function]
    Let $d\geq 3$, $n \in \N$, and $U$ be a finite non-empty subset of $\Z^d$ viewed as a subset of $\Lambda_n$ for each $n$. Set $u_n \coloneqq \lvert U^c\rvert$ and let $(\mu_k)_{k\leq u_n}$ ordered non-increasingly denote the eigenvalues of 
    $$G_{\Lambda_n}^{0\text{-}\mathrm{avg.}}\vert_{U^c\times U^c} - G_{\Lambda_n}^{0\text{-}\mathrm{avg.}}\vert_{U^c\times U} G_{\Lambda_n}^{0\text{-}\mathrm{avg.}} \vert_{U^2}^{-1} G_{\Lambda_n}^{0\text{-}\mathrm{avg.}}\vert_{U\times U^c}.$$ 
    Then, for all $k \leq u_n - \lvert U\rvert$, 
        \begin{equation} \label{eq:auxiliary_estim}
            \begin{aligned}
                \mu_k \in \bigg[\frac{1}{\eta_{n^d-u_n + k + \lvert U\rvert +1}}, \frac{1}{\eta_{k+1}}\bigg] \quad \bigg( =\bigg[0, \frac{1}{\eta_{k+1}}\bigg] \;\text{for } k=u_n - \lvert U\rvert\bigg).
            \end{aligned}
        \end{equation} 
\end{lemma}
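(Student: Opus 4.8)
The first item is immediate from the diagonalisation carried out right after Definition~\ref{def:zeroavGFF}: there one obtains $G_{\Lambda}^{0\text{-avg.}}=Q\,\mathrm{diag}\big(0,(1/\eta_k)_{k=2}^{n^d}\big)\,Q^{T}$, with $Q$ the orthogonal matrix whose columns are the eigenvectors of $-\Delta_{\Lambda}$ and $0=\eta_1<\eta_2\le\cdots\le\eta_{n^d}$ its eigenvalues, ordered as in Lemma~\ref{lemma:massive_EVs}; hence the spectrum of $G_{\Lambda}^{0\text{-avg.}}$ is exactly $\{0\}\cup\{1/\eta_k:2\le k\le n^d\}$, all elements being non-negative, so the matrix is positive semi-definite. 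For the second item I would argue with Cauchy interlacing and Weyl's inequality, in the spirit of the last part of the proof of Lemma~\ref{lemma:massive_EVs}. Write $G\coloneqq G_{\Lambda}^{0\text{-avg.}}$ and split it into the blocks $A\coloneqq G\vert_{U^c\times U^c}$, $B\coloneqq G\vert_{U^c\times U}$, $C\coloneqq G\vert_{U\times U}$, so that the matrix in the statement is the Schur complement $S\coloneqq A-BC^{-1}B^{T}$. The point to establish first is that $C$ is invertible, which is not automatic since $G$ itself is singular: for $v$ supported on $U$ one has $v^{T}Gv=\sum_{k=2}^{n^d}\eta_k^{-1}\langle u^k,v\rangle^{2}$, which vanishes only when $v\perp u^k$ for all $k\ge 2$, i.e., when $v$ is proportional to the constant eigenvector $u^1$; as $U\subsetneq\Lambda$ this forces $v=0$, so $C\succ0$ and $S$ is well defined. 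Being the Schur complement of the positive semi-definite $G$ relative to the positive definite block $C$, $S$ is positive semi-definite, and since $BC^{-1}B^{T}\succeq0$ one has moreover $S\preceq A$.

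With this in place I would obtain \eqref{eq:auxiliary_estim} as a two-sided bound, writing $\lambda_k$ for the $k$-th largest eigenvalue. For the upper bound, $S\preceq A$ gives $\mu_k\le\lambda_k(A)$, and Cauchy interlacing for the principal submatrix $A$ of $G$, of codimension $\lvert U\rvert$, gives $\lambda_k(A)\le\lambda_k(G)$; since the non-increasing eigenvalue list of $G$ is $\lambda_k(G)=1/\eta_{k+1}$ for $1\le k\le n^d-1$ together with $\lambda_{n^d}(G)=0$, and since $k\le u_n-\lvert U\rvert<n^d-1$, this yields $\mu_k\le 1/\eta_{k+1}$. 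For the lower bound, write $A=S+BC^{-1}B^{T}$ with $\mathrm{rank}(BC^{-1}B^{T})\le\lvert U\rvert$; Weyl's inequality then gives $\lambda_{k+\lvert U\rvert}(A)\le\mu_k+\lambda_{\lvert U\rvert+1}(BC^{-1}B^{T})=\mu_k$, and a second application of Cauchy interlacing gives $\lambda_{k+\lvert U\rvert}(A)\ge\lambda_{k+2\lvert U\rvert}(G)$. Using $n^d-u_n=\lvert U\rvert$, the right-hand side equals $1/\eta_{k+2\lvert U\rvert+1}=1/\eta_{\,n^d-u_n+k+\lvert U\rvert+1}$ whenever $k+2\lvert U\rvert\le n^d-1$, and it equals $0$ exactly when $k=u_n-\lvert U\rvert$ (so that $k+2\lvert U\rvert=n^d$), which is the parenthetical case.

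The argument has no serious obstacle; the one genuinely load-bearing point is the invertibility of $C$, which is needed here because the usual shortcut ``Schur complement $=$ inverse of the $U^c$-block of $G^{-1}$'' is unavailable for the singular matrix $G$. The rest is bookkeeping: tracking indices through the two uses of Cauchy interlacing and the use of Weyl's inequality, and handling the degenerate endpoint $k=u_n-\lvert U\rvert$ where the relevant eigenvalue of $G$ collapses to $0$. If one wanted the marginally sharper lower bound $\mu_k\ge 1/\eta_{\,n^d-u_n+k+1}$, one could instead replace $G$ by the positive definite matrix $G+\varepsilon\,\mathbf 1\mathbf 1^{T}/n^d$, apply Cauchy interlacing to the $U^c$-block of its inverse and let $\varepsilon\downarrow0$; but the version above, obtained directly, is all that is needed in the sequel.
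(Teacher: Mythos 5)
Your proof is correct and follows essentially the same route as the paper's: Weyl's inequality applied to the rank-$\lvert U\rvert$ perturbation $G_{\Lambda}^{0\text{-avg.}}\vert_{U^c\times U} G_{\Lambda}^{0\text{-avg.}}\vert_{U^2}^{-1} G_{\Lambda}^{0\text{-avg.}}\vert_{U\times U^c}$, combined with Cauchy interlacing for the principal submatrix $G_{\Lambda}^{0\text{-avg.}}\vert_{U^c\times U^c}$, with matching index bookkeeping. The one thing you supply that the paper leaves implicit is the verification that $G_{\Lambda}^{0\text{-avg.}}\vert_{U\times U}$ is invertible (needed since $G_{\Lambda}^{0\text{-avg.}}$ itself is singular), which is a worthwhile addition.
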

\begin{proof}
    Let $(\zeta_k)_{k\leq u_n}$ and $(\upsilon_k)_{k\leq u_n}$ ordered non-increasingly denote the eigenvalues of the matrices $G_{\Lambda_n}^{0\text{-}\mathrm{avg.}}\vert_{U^c\times U^c}$ and $G_{\Lambda_n}^{0\text{-}\mathrm{avg.}}\vert_{U^c\times U} G_{\Lambda_n}^{0\text{-}\mathrm{avg.}} \vert_{U^2}^{-1} G_{\Lambda_n}^{0\text{-}\mathrm{avg.}}\vert_{U\times U^c}$, respectively. Since $G_{\Lambda_n}^{0\text{-}\mathrm{avg.}}\vert_{U^c\times U^c}$ is a principal submatrix of $G_{\Lambda_n}^{0\text{-}\mathrm{avg.}}$, by \cite[Theorem 4.3.28]{horn2012matrix}, for all $1 \leq k \leq u_n - 1$
    \begin{align*}
        \frac{1}{\eta_{n^d-u_n + k + 1}} \leq \zeta_k \leq \frac{1}{\eta_{k+1}}, && 0 \leq \zeta_{u_n} \leq \frac{1}{\eta_{u_n+1}}.
    \end{align*}
    Furthermore, since $G_{\Lambda_n}^{0\text{-}\mathrm{avg.}}\vert_{U^c\times U} G_{\Lambda_n}^{0\text{-}\mathrm{avg.}} \vert_{U^2}^{-1} G_{\Lambda_n}^{0\text{-}\mathrm{avg.}}\vert_{U\times U^c}$ is positive semi-definite and has rank at most $\lvert U\rvert$, $\upsilon_k = 0$ for all $k > \lvert U\rvert$. 
    Thus, by Weyl's inequality \cite[Theorem 4.3.1]{horn2012matrix}, 
    $\mu_k \in [\zeta_{k+ \lvert U\rvert}, \zeta_k]$ for all $k \leq u_n - \lvert U\rvert$.
    \eqref{eq:auxiliary_estim} follows.   
\end{proof}


\section{The infinite volume limit of the Spherical model}
\label{sec:sphr}

The main objective of this section is to characterize the infinite-volume limit of the spherical model. To be exact, we prove the following result, which is a preciser version of Theorem \ref{thm:spherical}. 
\begin{theorem}[Infinite-volume limit of the spherical model]
\label{thm:spher_detailed}
    Let $\Lambda_n = \T^d_n$ and $\theta = (\theta_x)_{x \in \Lambda_n}$ be a configuration of the spherical model on $\Lambda_n$ at inverse temperature $\beta > 0$. Set $\beta_c \coloneqq G_{\Z^d}(0,0)$. For any finite set $U \subset \Z^d$ (considered as a subset of $\Lambda_n$ for each $n$), $\theta_{U} \coloneqq (\theta_x)_{x \in U}$ converges in law as $n\rightarrow \infty$ to:
    \begin{enumerate}
        \item $\beta < \beta_c$: an $m^2$-massive GFF on $\Z^d$ restricted to $U$ scaled by $1/\sqrt{\beta}$ with the mass $m^2$ depending on $\beta$ and $d$ in such a way that
        \begin{align*}
            G_{\Z^d, m^2}(x,x) = \beta \quad \text{for all }\, x\in \Z^d;
        \end{align*}
        \item $\beta = \beta_c$: a GFF on $\Z^d$ restricted to $U$ scaled by $1/\sqrt{\beta}$;
        \item $\beta > \beta_c$: a GFF on $\Z^d$ restricted to $U$ scaled by $1/\sqrt{\beta}$ plus an independent constant random drift of the form $\sqrt{\frac{\beta - \beta_c}{\beta}} X \mathbf{1}_U$ with $X$ being a Rademacher random variable, i.e., $\P[X=1] = \P[X=-1] = \frac{1}{2}$.
    \end{enumerate}
\end{theorem}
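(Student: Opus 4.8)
The backbone of the argument is Proposition~\ref{prop:mGFF-spher}: for \emph{any} $m^{2}>0$, if $\bfphi$ denotes an $m^{2}$-massive GFF on $\T^{d}_{n}$, then $\sqrt{\beta}\,[\bftheta_{U}]_{n}$ has the law of $[\bfphi_{U}]_{n}$ conditioned on $\ntwo{\bfphi}^{2}=\beta n^{d}$. Following the mean-spherical idea from the probabilistic side, we pin the mass so that the conditioning sits \emph{at the mean}: let $m_{n}^{2}>0$ be the unique solution of $G_{\T^{d}_{n},m_{n}^{2}}(0,0)=\beta$ (uniqueness because $m^{2}\mapsto G_{\T^{d}_{n},m^{2}}(0,0)=\tfrac{1}{n^{d}}\sum_{w}(m^{2}+\eta_{w})^{-1}$ is a decreasing bijection of $(0,\infty)$ onto itself), equivalently $\E\,\ntwo{\bfphi}^{2}=n^{d}G_{\T^{d}_{n},m_{n}^{2}}(0,0)=\beta n^{d}$. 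The proof then splits according to the limiting behaviour of $m_{n}^{2}$, which is governed by $\beta_{c}=G_{\Z^{d},0}(0,0)$ -- so that in $d=2$, where $\beta_{c}=\infty$, only the first regime occurs. Comparing $G_{\T^{d}_{n},m^{2}}$ with $G_{\Z^{d},m^{2}}$ uniformly on compacts of $\Z^{d}$, as in Section~\ref{subsec:GFF}, gives $m_{n}^{2}\to m^{2}>0$ with $G_{\Z^{d},m^{2}}(0,0)=\beta$ when $\beta<\beta_{c}$, and $m_{n}^{2}\to0$ when $\beta\geq\beta_{c}$.

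\textbf{The Gaussian regimes $\beta\le\beta_{c}$.}
In both of these cases the \emph{unconditioned} field $\tfrac{1}{\sqrt{\beta}}[\bfphi_{U}]_{n}$ converges in law to $\tfrac{1}{\sqrt{\beta}}$ times an $m^{2}$-massive GFF (with $m^{2}>0$, resp.\ $m^{2}=0$) on $\Z^{d}$ restricted to $U$, since $G_{\T^{d}_{n},m_{n}^{2}}(x,y)\to G_{\Z^{d},m^{2}}(x,y)$ for $x,y\in U$; this is immediate when $\beta<\beta_{c}$. At $\beta=\beta_{c}$ one must moreover rule out survival of the zero mode of $\bfphi$, whose variance equals $\tfrac{1}{n^{2d}}\sum_{x,y}G_{\T^{d}_{n},m_{n}^{2}}(x,y)=\tfrac{1}{n^{d}m_{n}^{2}}$; that is, one needs $n^{d}m_{n}^{2}\to\infty$, together with $G_{\T^{d}_{n},m_{n}^{2}}\to G_{\Z^{d},0}$ on compacts. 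This is exactly where Proposition~\ref{prop:improved} enters: writing $\tfrac{1}{n^{d}m_{n}^{2}}=\beta_{c}-G_{\T^{d}_{n}}^{0\text{-avg.}}(0,0)+\tfrac{m_{n}^{2}}{n^{d}}\sum_{w\neq0}\tfrac{1}{\eta_{w}(m_{n}^{2}+\eta_{w})}$ and inserting the error bound $|G_{\T^{d}_{n}}^{0\text{-avg.}}(0,0)-\beta_{c}|=O(n^{2-d})$ -- sharpened in $d=3$ to $G_{\T^{d}_{n}}^{0\text{-avg.}}(0,0)<\beta_{c}$ -- together with $\eta_{k}=\Theta(k^{2/d}/n^{2})$ from Lemma~\ref{lemma:massive_EVs} to control $\sum_{w\neq0}\eta_{w}^{-2}$, yields $m_{n}^{2}=O(n^{-2})$, hence $n^{d}m_{n}^{2}\to\infty$. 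It then remains, in both regimes, to show that the conditioning $\{\ntwo{\bfphi}^{2}=\E\,\ntwo{\bfphi}^{2}\}$ evaporates. Splitting $\ntwo{\bfphi}^{2}=\ntwo{\bfphi_{U}}^{2}+\ntwo{\tilde\bfphi^{U}+\mathbf{h}^{\bfphi_{U}}}^{2}_{U^{c}}$ via the domain Markov property (Proposition~\ref{prop:GFF_MP}) with $\tilde\bfphi^{U}\perp\bfphi_{U}$, Bayes' formula writes the conditional density of $\bfphi_{U}$ at $\mathbf{a}$ as its unconditional density times $g_{n}(\beta n^{d}\mid\mathbf{a})/g_{n}(\beta n^{d})$, where $g_{n}(\cdot\mid\mathbf{a})$ and $g_{n}$ denote the densities of $\ntwo{\bfphi}^{2}$ with and without conditioning on $\bfphi_{U}=\mathbf{a}$. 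A (refined) local limit estimate for the quadratic form $\ntwo{\bfphi}^{2}$, whose variance $2\sum_{w}(m_{n}^{2}+\eta_{w})^{-2}$ is $o(n^{2d})$ by Lemma~\ref{lemma:massive_EVs}, makes this ratio tend to $1$ for each fixed $\mathbf{a}$ (the $\mathbf{a}$-dependent shifts of $\ntwo{\bfphi}^{2}$ being of lower order than its standard deviation), and Scheff\'e's lemma upgrades pointwise density convergence to convergence in law.

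\textbf{The low-temperature regime $\beta>\beta_{c}$.}
Here the zero mode genuinely survives, and the natural tool is Proposition~\ref{prop:zero-avGFF-spher}: $\sqrt{\beta}\,[\bftheta_{U}]_{n}$ equals $(\bfgamma+c\mathbf{1})$ restricted to $U$, with $\bfgamma$ a zero-average GFF and $c$ an independent Lebesgue constant, conditioned on $\ntwo{\bfgamma+c\mathbf{1}}^{2}=\beta n^{d}$; since $\bfgamma\perp\mathbf{1}$ this reads $c^{2}=\beta-\tfrac{1}{n^{d}}\ntwo{\bfgamma}^{2}$. By Proposition~\ref{prop:props:zeroAvGfct}, $\E[\tfrac{1}{n^{d}}\ntwo{\bfgamma}^{2}]=G_{\T^{d}_{n}}^{0\text{-avg.}}(0,0)\to\beta_{c}$, while $\mathrm{Var}(\tfrac{1}{n^{d}}\ntwo{\bfgamma}^{2})=\tfrac{2}{n^{2d}}\sum_{w\neq0}\eta_{w}^{-2}\to0$ (by Lemma~\ref{lemma:massive_EVs}, or \eqref{eq:zero-avGfct_polydec}), so $\tfrac{1}{n^{d}}\ntwo{\bfgamma}^{2}\to\beta_{c}$ in $L^{2}$. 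Hence the conditioning forces $c^{2}\to\beta-\beta_{c}>0$, so $c\to\pm\sqrt{\beta-\beta_{c}}$, each sign with probability $\tfrac12$ by the $c\mapsto-c$ symmetry, i.e.\ $c=\sqrt{\beta-\beta_{c}}\,X$ with $X$ Rademacher; and since $\ntwo{\bfgamma}^{2}$ depends on the fixed window $U$ only through a lower-order term, the conditioning leaves the law of $\bfgamma_{U}$ asymptotically unchanged, so it still converges to a GFF on $\Z^{d}$ restricted to $U$ (Proposition~\ref{prop:props:zeroAvGfct}) and is asymptotically independent of $X$. Dividing by $\sqrt{\beta}$ gives the limit of item~(3). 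Making these conditional statements rigorous again goes through density bounds for the pair $(\bfgamma_{U},\ntwo{\bfgamma}^{2})$, entirely analogous to the previous step.

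\textbf{Main obstacle.}
The recurring hard point is the step ``conditioning at the mean is asymptotically harmless'', i.e.\ a local limit estimate for $\ntwo{\bfphi}^{2}$ (resp.\ $\ntwo{\bfgamma}^{2}$) precise and uniform enough to compare its density at the mean with that of its version conditioned on a fixed window. This is comparatively easy for $\beta<\beta_{c}$, where the field has exponentially decaying correlations and (refined) classical limit theorems apply almost directly; at $\beta=\beta_{c}$ the correlations decay only polynomially, $\ntwo{\bfphi}^{2}$ has variance of a different order than $n^{d}$, and -- most delicately -- one must first secure $n^{d}m_{n}^{2}\to\infty$, which is precisely where the sharp asymptotics of the zero-average Green's function on the diagonal from Proposition~\ref{prop:improved} (the $O(n^{2-d})$ error term, and its sign when $d=3$) become indispensable.
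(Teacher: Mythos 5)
Your treatment of the two non-critical regimes follows the paper's own route: for $\beta<\beta_c$ the mass is pinned by $G_{\T^d_n,m_n^2}(0,0)=\beta$ and the conditioning is removed by a local limit estimate for $\ntwo{\bfphi}^2$ (this is exactly Proposition \ref{prop:dens_conv_1D} applied to the eigenvalue decomposition), and for $\beta>\beta_c$ you use the zero-average GFF plus an independent Lebesgue constant, concentration of $\ntwo{\bfgamma}^2/n^d$ around $\beta_c$, and the $c\mapsto -c$ symmetry; the only point you gloss over there is the integrable singularity $1/\sqrt{\beta-u}$ produced by the change of variables $c\mapsto c^2$, which is why the paper needs the uniform density bound of Claim 3 and not just concentration.

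At $\beta=\beta_c$, however, you take a genuinely different decomposition — you keep the massive GFF with $m_n^2$ solving $G_{\T^d_n,m_n^2}(0,0)=\beta_c$ instead of switching to the zero-average GFF — and this is where there is a real gap. Your argument hinges on the claim that a ``(refined) local limit estimate for the quadratic form $\ntwo{\bfphi}^2$'' makes the density ratio tend to $1$. But in your decomposition $\ntwo{\bfphi}^2=\frac{Y_0^2}{m_n^2}+\sum_{w\neq 0}\frac{Y_w^2}{m_n^2+\eta_w}$, the zero mode is a \emph{single} scaled $\chi^2_1$ with weight $1/m_n^2$, and its variance $2/m_n^4$ is (in $d=3$, where $1/(n^dm_n^2)=\Theta(n^{-1})$, and potentially in all $d\geq 3$ depending on the sign of $\beta_c-G^{0\text{-avg.}}_{\T^d_n}(0,0)$) of the same order $\Theta(n^4)$ as the total variance; in $d=3$ the lowest nonzero modes also each carry a non-vanishing fraction of the variance since $\eta_k=\Theta(k^{2/3}/n^2)$. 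Lindeberg's condition therefore fails, the normalized sum does \emph{not} converge to a Gaussian, and no off-the-shelf local CLT (nor Proposition \ref{prop:dens_conv_1D}) gives convergence of the \emph{density} at the conditioning point — which is exactly what your ratio argument requires. The paper's proof is built to avoid this: the zero-average decomposition removes the zero mode from the quadratic form entirely (it becomes the scalar $c$, handled by a one-dimensional change of variables), and in $d=3$ even then one cannot prove a local limit theorem; instead the paper proves only convergence in law of the normalized norm (via moment generating functions), supplements it with a uniform density bound, and closes the argument with Portmanteau applied to $x\mapsto 1/\sqrt{C-x}$ — a step that crucially needs $C=\lim_n(\beta_c n^d-\E\ntwo{\bfgamma}^2)/\sqrt{\mathrm{Var}[\ntwo{\bfgamma}^2]}>0$, i.e.\ precisely the sign $G^{0\text{-avg.}}_{\T^3_n}(0,0)<\beta_c$ from Proposition \ref{prop:improved}. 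In your route that sign never visibly enters, which is a symptom that the decisive difficulty has been assumed away rather than resolved; to complete your approach you would need to prove a local limit theorem for weighted sums of chi-squares whose largest weights are comparable to the standard deviation, which is a substantial missing ingredient.
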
 
\noindent The proof is divided into three parts, corresponding to the different temperature regimes, which are discussed separately in the following three subsections.

Besides convergence in law, we further establish convergence of local covariance functions of the spherical model. More precisely, in Section \ref{subsec:spher_correlations} we prove the following.
\begin{corollary}
\label{cor:spher_correlations}
    Let $\Lambda_n$, $\theta = (\theta_x)_{x \in \Lambda_n}$, and $U \subset \Z^d$ be as above, $(i_x)_{x \in U} \in \N_0^{U}$ be an arbitrary but fixed vector of non-negative integers.
    Then, the following holds:
    \begin{align*}
        \nu_{\Lambda_n, \beta} \bigg[\prod_{x \in U} \theta_x^{i_x}\bigg] \xrightarrow{n \rightarrow \infty} \E\bigg[ \prod_{x \in U} \alpha_x^{i_x}\bigg],
    \end{align*}
    where $\sqrt{\beta} \alpha \coloneqq \sqrt{\beta}(\alpha_x)_{x \in \Z^d}$ in correspondence with Theorem \ref{thm:spher_detailed} is either an $m^2$-massive GFF on $\Z^d$ if $\beta < \beta_c$, a GFF on $\Z^d$ if $\beta = \beta_c$, or a GFF on $\Z^d$ plus an independent drift $\sqrt{\beta - \beta_c} X \mathbf{1}_{\Z^d}$, where $X$ is a Rademacher random variable, if $\beta > \beta_c$.
\end{corollary}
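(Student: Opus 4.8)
The plan is to upgrade the distributional convergence provided by Theorem~\ref{thm:spher_detailed} to convergence of polynomial moments, the missing ingredient being uniform integrability. Fix the finite box $U\subset\Z^d$; for $n$ large enough that $U$ is properly contained in $\T^d_n$, put $Y_n\coloneqq\prod_{x\in U}([\theta_x]_n)^{i_x}$ and $Y\coloneqq\prod_{x\in U}\alpha_x^{i_x}$. Theorem~\ref{thm:spher_detailed} gives that $[\bftheta_U]_n$ converges in law to $\bfalpha_U\coloneqq(\alpha_x)_{x\in U}$, and since $(\sigma_x)_{x\in U}\mapsto\prod_{x\in U}\sigma_x^{i_x}$ is continuous on $\R^U$, the continuous mapping theorem yields $Y_n\Rightarrow Y$. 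The limit is integrable: in each regime $\bfalpha_U$ is a finite-dimensional centred Gaussian vector, possibly shifted by an independent bounded (Rademacher) drift, so it has finite moments of all orders and $\E[Y]<\infty$. Hence, once we know that $(Y_n)_n$ is uniformly integrable, the standard fact that weak convergence together with uniform integrability forces convergence of expectations gives $\nu_{\Lambda_n,\beta}\big[\prod_{x\in U}\theta_x^{i_x}\big]=\E[Y_n]\to\E[Y]$, which is the claim.

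To establish uniform integrability it suffices to bound a slightly higher moment uniformly in $n$, say $\sup_n\nu_{\Lambda_n,\beta}[Y_n^2]=\sup_n\nu_{\Lambda_n,\beta}\big[\prod_{x\in U}\theta_x^{2i_x}\big]<\infty$. Applying the generalised Hölder inequality with all exponents equal to $\lvert U\rvert$, then translation invariance of $\nu_{\Lambda_n,\beta}$ on the torus, and finally $L^m\subset L^q$ for a probability measure when $m\le q$,
\begin{align*}
    \nu_{\Lambda_n,\beta}\Big[\prod_{x\in U}\theta_x^{2i_x}\Big]\;\le\;\prod_{x\in U}\Big(\nu_{\Lambda_n,\beta}\big[\lvert\theta_0\rvert^{\,2\lvert U\rvert i_x}\big]\Big)^{1/\lvert U\rvert}\;\le\;\max\!\big(1,\,\nu_{\Lambda_n,\beta}\big[\lvert\theta_0\rvert^{\,q}\big]\big),\qquad q\coloneqq 2\lvert U\rvert\max_{x\in U}i_x .
\end{align*}
So everything reduces to the uniform single-site moment bound $\sup_n\nu_{\Lambda_n,\beta}[\lvert\theta_0\rvert^{\,p}]<\infty$ for every $p\in\N$; this is Lemma~\ref{lemma:spher_moments}, which is the real content behind the corollary.

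For Lemma~\ref{lemma:spher_moments} I would argue as in the complexified version treated in \cite{Lukkarinen2020}. By Proposition~\ref{prop:mGFF-spher}, $\sqrt{\beta}\,\theta_0$ is distributed as $\varphi_0$ conditioned on $Q\coloneqq\ntwo{\bfphi}^2=\beta n^d$, where $\bfphi$ is the $m_n^2$-massive GFF on $\T^d_n$ and $m_n^2$ is chosen so that $\E[Q]=n^d G_{\T^d_n,m_n^2}(0,0)=\beta n^d$ --- that is, one conditions $Q$ to equal its own mean. Since $(\varphi_0,Q)$ has a joint density,
\begin{align*}
    \E\big[\lvert\varphi_0\rvert^{\,p}\mid Q=\beta n^d\big]\;=\;\frac{1}{f_Q(\beta n^d)}\int_\R\lvert t\rvert^{\,p}\,f_{\varphi_0,Q}(t,\beta n^d)\,\d t .
\end{align*}
Using the spectral representation $Q=\sum_k\lambda_k g_k^2$ with $(g_k)$ i.i.d.\ standard normal and $(\lambda_k)$ the eigenvalues of $G_{\T^d_n,m_n^2}$ from Lemma~\ref{lemma:massive_EVs}, a local limit / Gaussian approximation estimate for this weighted sum of squares provides a lower bound $f_Q(\beta n^d)\ge c_n$; the same kind of estimate applied conditionally on $\varphi_0=t$ (where $Q-t^2$ is again such a sum, the shape of whose law is independent of $t$ up to a non-centrality that only flattens it), combined with the fact that the marginal of $\varphi_0$ is exactly $\mathcal{N}(0,\beta)$, yields $f_{\varphi_0,Q}(t,\beta n^d)\le C\,e^{-t^2/(2\beta)}\,c_n$ uniformly in $n$. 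Dividing the two bounds and integrating in $t$ gives $\E[\lvert\varphi_0\rvert^{\,p}\mid Q=\beta n^d]\le C_p$, uniformly in $n$, as wanted. The main obstacle is making these density estimates for $Q$ uniform in $n$ across all three temperature regimes simultaneously --- in particular at $\beta=\beta_c$, where $m_n^2\to 0$ and one must control the contribution of the near-constant eigenmode (of eigenvalue $\sim 1/m_n^2$) to the fluctuations of $Q$, which is precisely where bounds on $\sum_y G_{\T^d_n,m_n^2}(0,y)^2$ and the Green's function estimates of Subsection~\ref{subsec:Greens_fcts} enter.
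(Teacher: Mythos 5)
Your reduction of the corollary to a uniform single-site moment bound is exactly the paper's: weak convergence from Theorem \ref{thm:spher_detailed} plus uniform integrability, the latter obtained from generalised H\"older's inequality, translation invariance of the spins on the torus, and $\sup_n\nu_{\Lambda_n,\beta}\big[\lvert\theta_0\rvert^{2p}\big]\le C_p$, i.e.\ Lemma \ref{lemma:spher_moments}. That part is complete and correct.

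The gap is in your proof of Lemma \ref{lemma:spher_moments} itself. You propose to bound $\E\big[\lvert\varphi_0\rvert^{p}\mid Q=\beta n^d\big]$ by combining a lower bound on $f_Q(\beta n^d)$ with an upper bound on $f_{\varphi_0,Q}(t,\beta n^d)$, both coming from ``a local limit / Gaussian approximation'' for $Q=\sum_k\lambda_k g_k^2$. This is not carried out, and a Gaussian approximation is in fact false in two of the three regimes. With your normalisation $G_{\T^d_n,m_n^2}(0,0)=\beta$ one has $m_n^2\to 0$ for $\beta\ge\beta_c$; for $\beta>\beta_c$ the constant mode contributes an eigenvalue $1/m_n^2=\Theta(n^d)$, so $\mathrm{Var}[Q]=2\sum_k\lambda_k^2=\Theta(n^{2d})$ is comparable to $\E[Q]^2$ and $Q/n^d$ converges in law to the non-Gaussian variable $\beta_c+(\beta-\beta_c)g_1^2$; at $\beta=\beta_c$ in $d=3$ the normalised fluctuations of such quadratic forms are again non-Gaussian in the limit (this is the content of Claim A in the critical section). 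So the two-sided density estimates you need --- matching up to uniform constants, and uniform in the non-centrality induced by conditioning on $\varphi_0=t$ --- are precisely the delicate input the paper spends Section \ref{sec:sphr} establishing for the main theorem (Claims B/C, Claim 3, Proposition \ref{prop:dens_conv_1D}); asserting that the non-centrality ``only flattens'' the conditional density is a heuristic, not a proof. The paper's actual proof of the lemma bypasses densities entirely and works for an arbitrary mass $m^2>0$: in the eigenbasis, the conditioning makes $\ntwo{\bfphi}^{2p}=(\beta n^d)^p$ deterministic, which identifies $n^{-dp}\sum_{w_1,\dots,w_p}\E\big[\prod_j Z_{w_j}^2/(m^2+\eta_{w_j})\mid\cdot\big]$ as exactly $\beta^p$; expanding $\E[\varphi_0^{2p}\mid\cdot]$ over $2p$-tuples with the same weights $n^{-dp}$ (since $q_{0,w}^2=1/n^d$), killing every term containing an odd power of some $Z_w$ by the sign symmetry of the conditional law, and counting multiplicities yields $\E[\varphi_0^{2p}\mid\cdot]\le(2p)!\,\beta^p$. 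If you keep your route you must supply the uniform density estimates in all three regimes; the combinatorial argument is both shorter and regime-independent.
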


\subsection{The high-temperature regime}
\label{subsec:sph_Thigh}

This section presents a proof of $\beta < \beta_c$ case of Theorem \ref{thm:spher_detailed}. 
We start by recalling that by Proposition \ref{prop:rel_GFF_spher},
\begin{align*}
    \mathrm{Law}\left(\theta \right) = \mathrm{Law}\bigg( \frac{\phi}{\sqrt{\beta}} ~\bigg\vert~ \norm{\phi} = \sqrt{\beta n^d}\bigg)
\end{align*}
for any $m_n^2$-massive GFF $\phi$ on $\Lambda_n$ with $m_n^2>0$. Thus, for any finite set $U \subset \Z^d$, we have the following equality of Lebesgue densities on $\R^{U}$:
\begin{align}
\label{eq:dens_spher}
    f_{\theta_{U}}(v) 
    &= f_{\frac{\phi_U}{\sqrt{\beta}} \big\vert \norm{\phi} =\sqrt{\beta n^d}} (v) 
    = f_{\frac{\phi_U}{\sqrt{\beta}}} \left( v \right) 
    \frac{f_{\norm{{\phi}^{U^c} + h^{\sqrt{\beta} v}}^2} (\beta n^d)}{f_{\norm{\phi}^2} (\beta n^d)} \eqqcolon f_{\frac{\phi_U}{\sqrt{\beta}}} \left( v \right) {J_n}(v),
\end{align}
where $v \coloneqq (v_x)_{x \in U} \in \R^{U}$. In the second equality, we applied the domain Markov property of $\phi$ (Proposition \ref{prop:domainMP}). Provided that $m_n^2 \to m^2 > 0$, the vector $\phi_U$ converges in distribution to an $m^2$-massive GFF on $\Z^d$ restricted to $U$. This in particular implies pointwise convergence of the corresponding Gaussian densities. To ensure that the ratio $J_n(v)$ in \eqref{eq:dens_spher} converges to $1$, we choose $m_n^2 > 0$ such that $\E[\norm{\phi}^2] = \beta n^d$, or equivalently, $G_{\Lambda_n, m_n^2}(0,0) = \beta$ for all $n$, which is possible since $\beta < \beta_c = G_{\Z^d}(0,0)$.

To conclude the proof in this regime, we show that with the above choice of $(m^2_n)_n$, the ratio $J_n(v)$ converges to one. To this end, let $P \in \R^{U^c\times U^c}, Q \in \R^{\Lambda_n\times \Lambda_n}$ be the orthonormal matrices diagonalizing $G_{U^c, m_n^2}\vert_{U^c \times U^c}$ and $G_{\Lambda_n, m_n^2}$, respectively. In particular,
\begin{align*}
    P^T (G_{U^c, m_n^2}\vert_{U^c \times U^c}) P &= \mathrm{diag}\left( (\tau_i)_{i=1}^{\lvert U^c\rvert} \right), \\
    Q^T G_{\Lambda_n, m_n^2} Q &= \mathrm{diag} \left( (\lambda_i \coloneqq (\eta_i + m_n^2)^{-1})_{i=1}^{n^d}\right)
\end{align*}
where $\tau_1 \geq \ldots \geq \tau_{\lvert U^c\rvert} > 0$ are the eigenvalues of $G_{U^c, m_n^2}\vert_{U^c \times U^c}$, $0 = \eta_1 < \eta_2 \leq \ldots \leq \eta_{n^d}$ are those of $-\Delta_{\Lambda_n}$ (cf. Lemma \ref{lemma:massive_EVs}). Set $u_n \coloneqq \lvert U^c\rvert$, $\tilde h \coloneqq P^T h^{\sqrt{\beta} v}_{U^c}$, and for a given triangle array $(Y_{i, u_n})_{i \leq u_n, u_n \in \N}$ of i.i.d. standard normal random variables, define $Z_{i, u_n} \coloneqq (\tilde h_i + \sqrt{\tau_i} Y_{i, u_n})^2 - \tau_i - \tilde h_i^2$ . In terms of these variables, we can then rewrite $J_n(v)$ as follows:
\begin{align*}
    J_n(v) &= \frac{f_{\ntwo{\tilde h + \mathcal{N}(0, \mathrm{diag}(\tau_j)_{j=1}^{u_n})}^2} (\beta n^d - \beta \ntwo{v}^2)}{f_{\ntwo{\mathcal{N}(0, \mathrm{diag}(\lambda_j)_{j=1}^{n^d})}^2} (\beta n^d)}\\
    &= \frac{f_{\sum_{i=1}^{u_n} Z_{i, u_n}} (\beta n^d - \sum_{i=1}^{u_n} \tau_i - \norm{\tilde h}^2 - \beta \ntwo{v}^2)}{f_{\sum_{i=1}^{n^d} \lambda_i (Y_{i,n^d}^2-1)} (0)}.
\end{align*}
Here, we used that $\beta n^d = \mathrm{Trace}(G_{\Lambda_n, m_n^2}) = \sum_{i=1}^{n^d} \lambda_i$. 

Note that by the canonical properties of the massive Green's function (see Subsection \ref{subsec:Greens_fcts}, particularly \eqref{eq:Gfct_decomp}), we have
\begin{align*}
      \sum_{i=1}^{u_n} \tau_i + \norm{\tilde h}^2
      &= \mathrm{Trace} \Big( G_{\Lambda_n, m_n^2}\vert_{U^c \times U^c} \Big) - \sum_{x \in U^c} \E_{\alpha_n}^x[ \ind_{\{H_{U} < \infty\}} G_{\Lambda_n, m_n^2}(X_{H_{U}},x)] + \big\lVert{h^{\sqrt{\beta} v}_{U^c}\big\rVert}^2\\
      &= u_n \beta - \sum_{y \in U} \sum_{x \in U^c} \P_{\alpha_n}^x[H_{U} < \infty, X_{H_{U}} = y] G_{\Lambda_n, m_n^2}(y,x) + \ntwo{h^{\sqrt{\beta} v}_{U^c}}^2\\
      &\eqqcolon n^d \beta - R_{U, n},
\end{align*}
where $\P_{\alpha_n}^x$ denotes the law of a simple random walk $X$ on $\Lambda_n \cup \{\dagger\}$ started at $x$ and killed uniformly with probability $\alpha_n = \frac{m^2_n}{2d + m^2_n}$ at every step; $H_U$ and $\tilde H_U$ are the first hitting time and the first return time to $U$ by this walk, respectively. Furthermore, since by the weak Markov property of the (massive) random walk $X$ (here $\omega_k$ stands for the shift of the time by $k$) and \eqref{eq:Gfct_expdec},
\begin{align*} 
    \P^{x}_{\alpha_n}[H_{U} < \infty] &= \sum_{k \geq 0} \sum_{z \in U} \P^{x}_{\alpha_n}[X_k = z, \tilde H_{U} \circ \omega_k = \infty] 
    = \sum_{z \in U} \P^{z}_{\alpha_n} [\tilde H_{U}  = \infty] \sum_{k \geq 0} \P^{x}_{\alpha_n}[X_k = z]\\
    &\leq \sum_{z \in U} \frac{2d}{1-\alpha_n} G_{\Lambda_n, m^2}(x,z)
    \leq (2d + m_n^2) |U| \sup_{z \in U} G_{\Lambda_n, m^2}(x,z)\\
    &\leq c(m^2, d, U) e^{-c'(m^2, d) \min_{z \in U}|x-z|}
\end{align*}
for appropriate constants $c, c' > 0$, we obtain
\begin{align*}
    \ntwo{h^{\sqrt{\beta} v}_{U^c}}^2 \leq \beta \ntwo{v}^2 \sum_{x \in U^c} \P^{x}_{\alpha_n}[H_{U} < \infty]^2 \leq c^2 \beta \ntwo{v}^2 \sum_{x \in U^c} e^{-2c'\min_{z \in U}|x-z|}.
\end{align*}
And therefore, $0 \leq |R_{U,n}| \leq \beta |U| + \sum_{y \in U} \sum_{x \in U^c} c e^{-c' |x-y|} + \ntwo{h^{\sqrt{\beta} v}_{U^c}}^2 \leq M$ for an absolute constant $M>0$.

Next, we study the variances of the two sums $\sum_{i=1}^{u_n} Z_{i, u_n}$ and $\sum_{i=1}^{n^d} \lambda_i (Y_{i,n^d}^2-1)$, denoted $s_n^2$ and $\sigma_n^2$, respectively. Using Lemma \ref{lemma:massive_EVs}, we deduce that
\begin{align*}
    \sigma_n^2 = 2 \sum_{k=1}^{n^d} \lambda_k^2 = \Theta(n^d);\qquad
    s_n^2 = 2\sum_{k=1}^{u_n} \tau_k^2 + \underbrace{4\sum_{j=1}^{u_n} \tau_j \tilde{h}_j^2}_{\leq 4\tau_1 M = \Theta(1)} = \sigma_n^2(1 - o(1)).
\end{align*}
This allows us to rewrite the ratio in question as
\begin{align*}
    J_n(u) = (1 + o(1)) \frac{s_n f_{\sum_{i=1}^{u_n} Z_{i, u_n}} \Big(s_n \frac{R_{U, n} - \beta \ntwo{v}^2}{s_n}\Big)}{\sigma_n f_{\sum_{i=1}^{n^d} \lambda_i (Y_{i,n^d}^2-1)} (0)}.
\end{align*}

To conclude the proof, we use the following result (stated in a more general form to also cover the critical case), proven in Appendix \ref{A:proof_densconv_1D}.
\begin{proposition}
\label{prop:dens_conv_1D}
    Let $(X_{i,n})_{i\leq n, n \in \N}$ be a triangular array of independent centered random variables with the probability density functions $f_{i,n}$ (with respect to Lebesgue measure on $\R$). Assume that 
    \begin{enumerate}
        \item $f_{i,n} \in L^r(\R)$ for some $r \in (1,2]$ (independent of $i,n$) such that $\sup_{i, n} \norm{f_{i,n}}_{L^r} \leq M$ for some $M> 0$;
        \item For all $i \leq n, n \in \N$, $\sigma_{i,n}^2 \coloneqq \mathrm{Var}[X_{i,n}] < \infty$ ordered such that $\sigma_{1,n}^2 \geq \ldots \geq \sigma_{n,n}^2$;
        \item Lindeberg's condition is satisfied, that is, for any $\varepsilon > 0$,
        \begin{align*}
            \frac{\sum_{i \geq 1} \E[X^2_{i,n} \ind_{\{ \lvert X_{i,n}\rvert > \varepsilon s_n\}}]}{s_n^2} \xrightarrow{n \rightarrow \infty} 0,
        \end{align*}
        where $s_n^2 \coloneqq \sum_{i=1}^n \sigma_{i,n}^2$.
        \item There exist $\delta > 0$ (uniform), $K(n) \geq 1$, $l_*(n) \geq 1, n \geq l^*(n) \geq 2 \lceil\frac{r}{r-1}\rceil$ such that for all $n$ sufficiently large
        \begin{align*}
            (a) &\; \frac{\sum_{i \geq l_*(n)} \sigma_{i,n}^2}{\sum_{i=1}^n \sigma_{i,n}^2} \geq \delta;\\
            (b) &\; \frac{\sum_{i \geq l_*(n)} \E[X^2_{i,n} \ind_{\{ \lvert X_{i,n}\rvert > K(n)\}}]}{\sum_{i \geq l_*(n)} \sigma_{i,n}^2} \leq \frac{1}{8}; \\
            (c) &\; \frac{n - l^*(n)}{\sigma_{l^*, n}^2 \vee K(n)^2} \gg \log \Big( \sum_{i=1}^n \sigma_{i,n}^2\Big).
        \end{align*}
    \end{enumerate}
    Then the relation 
    \begin{align*}
        s_n f^{(n)} (s_n x) \xrightarrow{n\rightarrow \infty} \frac{1}{\sqrt{2\pi}} \: e^{-\frac{x^2}{2}}
    \end{align*}
    holds uniformly over $\R$. Here, $f^{(n)}$ is the convolution of $f_{1,n}, \ldots, f_{n,n}$, as well as the density function of $\sum_{i=1}^n X_{i,n}$. Note that $s_n f^{(n)} (s_n x)$ is the density of $\frac{\sum_{i=1}^n X_{i,n}}{s_n}$.
\end{proposition}
\noindent Indeed, if the arrays $(Z_{i, u_n})_{i,u_n}$ and $(\lambda_i (Y_{i,n^d}^2-1))_{i,n^d}$ satisfy the assumptions of the proposition, both densities $s_n f_{\sum_{i=1}^{v_n} Z_{i, v_n}} (s_n x)$ and $\sigma_n f_{\sum_{i=1}^{n^d} \lambda_i (Y_{i,n^d}^2-1)} (\sigma_n x)$ converge uniformly to the standard normal density $f_{\mathcal{N}(0,1)}(x)$. In particular, this implies $J_n(v) \to 1$, as desired.

Let us now show that the array $(Z_{i, u_n} = (\tilde h_i + \sqrt{\tau_i} Y_{i, u_n})^2 - \tau_i - \tilde h_i^2)_{i,u_n}$ (up to reordering) indeed satisfies the assumptions of Proposition~\ref{prop:dens_conv_1D} with $l_* = 1$, $l^* = 2 \lceil \frac{r}{r-1}\rceil$, and $K$ sufficiently large but independent of $n$. 

It is clear that the variables $Z_{i, u_n}$ are centered and independent, with variances $\sigma^2_{i,u_n} \coloneqq \mathrm{Var}[Z_{i,u_n}] = 2 \tau_i^2 + 4 \tilde h_i^2 \tau_i \in (0, \infty)$.

Let $f_{i,u_n}$ denote the density of $Z_{i,u_n}$. It is given by
\begin{align*}
    f_{i,u_n}(t) &= \frac{\left( f_Y \Big( \frac{\sqrt{t + \tau_i + \tilde h_i^2} - \tilde h_i}{\sqrt{\tau_i}}\Big) + f_Y \Big( \frac{\sqrt{t + \tau_i + \tilde h_i^2} + \tilde h_i}{\sqrt{\tau_i}} \Big) \right)}{2\sqrt{\tau_i} \sqrt{t + \tau_i + \tilde h_i^2}} \ind_{\{t > - \tau_i - \tilde h_i^2\}},
\end{align*}
where $Y$ is a standard normal random variable. For any $r \in (1,2)$, we estimate (and thus verify condition 1.)
\begin{align*}
    \norm{f_{i,u_n}}_{L^r}^r &= \int_0^\infty \frac{\left( f_Y \Big( \frac{\sqrt{s} - \tilde h_i}{\sqrt{\tau_i}}\Big) + f_Y \Big( \frac{\sqrt{s} + \tilde h_i}{\sqrt{\tau_i}} \Big) \right)^r}{2^r \tau_i^{r/2} s^{r/2}} \d s 
    \leq \int_0^\infty \frac{f^r_Y \Big( \frac{\sqrt{s} - \lvert \tilde h_i\rvert}{\sqrt{\tau_i}} \Big)}{\tau_i^{r/2} s^{r/2}} \d s\\
    &= \int_0^\infty \frac{f_Y^r \Big( \sqrt{s} - \lvert\frac{\tilde h_i}{\sqrt{\tau_i}}\rvert\Big)}{\tau_i^{r-1} s^{r/2}} \d s 
    = 2\int_0^\infty \frac{f_Y^r \Big( t - \lvert\frac{\tilde h_i}{\sqrt{\tau_i}}\rvert\Big)}{\tau_i^{r-1} t^{r-1}} \d t\\ 
    &\leq \frac{2}{\tau_i^{r-1}} \Bigg(\int_{0}^1 t^{1-r} \d t + \int_1^{\infty} f_Y \bigg( t - \Big\lvert\frac{\tilde h_i}{\sqrt{\tau_i}} \Big\rvert \bigg) \d t\Bigg)\\ 
    &\leq \frac{2}{\tau_i^{r-1}} \left(\frac{1}{2-r} + 1 \right) \eqqcolon c(r) \frac{1}{\tau_i^{r-1}}.
\end{align*}
Hence, $\sup_{i, n} \norm{f_{i,u_n}}_{L^r}^r \leq c(r) (4d + 2 m^2)^{r-1}$, by uniform bounds on $(\tau_i)_i$—$\tau_i \in [L_*, L^*] \coloneqq [(2m^2+4d)^{-1}, 2/m^2]$—implied by \eqref{eq:unif_bdsEVs} and the convergence $m_n^2 \xrightarrow{n\rightarrow \infty} m^2>0$. 
Since $\tilde h_i^2 \leq \ntwo{h^{\sqrt{\beta} v}_{U^c}}^2 \leq c(m^2, d, U) \beta \ntwo{v}^2$, this further yields 
$$0 < 2 L_*^2 \leq \sigma^2_{i,u_n} \leq 2(L^*)^2 + 4c \beta\ntwo{v}^2 L^* < \infty$$ 
uniformly over $(i, n)$—condition 2. up to reordering. Consequently, $s_n^2 = \Theta(u_n)$. 

We now check condition $4.$ (and $3.$): $(a)$ and $(c)$ are immediate with our choice of $l_*, l^*$, and large $K$ independent of $n$. For all $K$ sufficiently large (e.g., $K > 4(L^* + c \beta\ntwo{v}^2)$), we have
\begin{align*}
    \big\{ \lvert Z_{i,u_n}\rvert > K \big\} &\subset \Big\{\big(\tilde h_i + \sqrt{\tau_i} Y_{i,u_n}\big)^2 > K\Big\} \\
    &\subset \Big\{|\sqrt{\tau_i} Y_{i,u_n}| > \sqrt{K} - |\tilde h_i| \Big\} 
    \subset \bigg\{ \lvert Y_{i,u_n} \rvert > \frac{\sqrt{K}}{2 \sqrt{L^*}} \bigg\}.
\end{align*}
Furthermore, $Z^2_{i,u_n} \leq \tilde c(d, m^2, U, L*, v) (Y_{i, u_n}^4 + \lvert Y_{i, u_n}\rvert^3 + Y_{i, u_n}^2 + \rvert Y_{i, u_n}\rvert + 1)$ for an appropriate $\tilde c > 0$, so
\begin{align*}
    \frac{\E[Z^2_{i,u_n} \ind_{\{ \lvert Z_{i,u_n}\rvert > K\}}]}{ \sigma_{i,u_n}^2}
    &\leq \frac{5\tilde c}{2L^2_*}  \E\bigg[Y_{i, u_n}^4 \ind_{\big\{ \lvert Y \rvert > \frac{\sqrt{K}}{2 \sqrt{L^*}} \big\}}\bigg] = \hat{c} \E\bigg[Y^4 \ind_{\big\{ \lvert Y \rvert > \frac{\sqrt{K}}{2 \sqrt{L^*}} \big\}}\bigg].
\end{align*}
This in turn yields,
\begin{align*}
    \frac{\sum_{i=1}^{u_n} \E[Z^2_{i,u_n} \ind_{\{ \lvert Z_{i,u_n}\rvert > K\}}]}{s_n^2} \leq \hat{c}\E\bigg[Y^4 \ind_{\big\{ \lvert Y \rvert > \frac{\sqrt{K}}{2 \sqrt{L^*}} \big\}}\bigg],
\end{align*}
which can be made arbitrarily small by choosing $K$ large. The Lindeberg condition follows with $\varepsilon s_n$ for $\varepsilon > 0$ in place of $K$.

\begin{remark}
In the high-temperature regime, one could alternatively deduce the uniform convergence of the numerator and denominator in $J_n$ to the standard normal density using results from Malliavin calculus \cite[Corollary 6.6, Theorem 7.3, Lemma 7.1]{HU2014814}. However, such results are not applicable in the supercritical regime and offer limited help for the critical case (only for $d \geq 4$, see Remark~\ref{rem:Malliavin}). For these reasons, we have chosen to follow a more direct route.
\end{remark}

\subsection{The low-temperature regime}
\label{subsec:sph_Tlow}

This section proves the $\beta > \beta_c$ case of Theorem \ref{thm:spher_detailed} and lays the groundwork for the critical regime. Throughout, we assume $d \geq 3$ and $\beta \geq \beta_c$, explicitly indicating when the analysis is restricted to $\beta > \beta_c$.
Recall that by Proposition \ref{prop:rel_GFF_spher}, 
\begin{align*}
    \mathrm{Law}\left(\theta \right) = \mathrm{Law}\bigg( \frac{\gamma + c \mathbf{1}_{\Lambda_n}}{\sqrt{\beta}} ~\bigg\vert~ \norm{\gamma + c \mathbf{1}_{\Lambda_n}} = \sqrt{\beta n^d}\bigg),
\end{align*}
where $\gamma$ is a zero-average GFF on $\Lambda_n$ and $c$ is an independent Lebesgue constant. Our strategy is to analyze the conditional joint law of $(\gamma_U, c)$ given $\norm{\gamma + c \mathbf{1}_{\Lambda_n}} = \sqrt{\beta n^d}$, for a finite set $U \subset \Z^d$ (viewed as a subset of $\Lambda_n$). More precisely, we show that it converges to the product measure of the GFF $\psi$ on $\Z^d$ restricted to $U$ and a Rademacher random variable $X$ scaled by $\sqrt{\beta - \beta_c}$. This immediately yields the desired local convergence in distribution of $\theta$ towards $\frac{1}{\sqrt{\beta}} \psi + \sqrt{\frac{\beta - \beta_c}{\beta}} X \mathbf{1}_{\Z^d}$.

Let $B \subset \R^{U}$ be a Borel set and $I \subset [0,\sqrt{\beta}]$ be a non-empty interval with the closure $\overline{I} = [a,b]$ and such that $a,b \notin \{\pm \sqrt{\beta - \beta_c}\}$. Observe that since $\gamma$ is a zero-average GFF, $\langle\gamma, \mathbf{1}_{\Lambda_n}\rangle = 0$ almost surely. Therefore,
\begin{align*}
    \P &\Big[ \gamma_U \in B, c \in I ~\Big\vert~ \norm{\gamma + c \mathbf{1}_{\Lambda_n}}  = \sqrt{\beta n^d}\Big] = \P \Big[ \gamma_U \in B, c \in I ~\Big\vert \norm{\gamma} ^2 + c^2 n^d = \beta n^d\Big]\\
    &= \int_B \frac{\int_I f_{\gamma_U, \ntwo{\gamma}^2}(y, n^d (\beta - c_0^2)) \d c_0 }{\int_{-\sqrt{\beta}}^{\sqrt{\beta}} f_{\ntwo{\gamma}^2} (n^d(\beta - \tilde c^2)) \d \tilde c} \d y 
    = \int_B f_{\gamma_U}(y) \frac{\int_I f_{\ntwo{\hat{\gamma}^y}^2}(n^d (\beta - c_0^2)) d c_0 }{2\int_{0}^{\sqrt{\beta}} f_{\ntwo{\gamma}^2} (n^d(\beta - \tilde c^2)) \d \tilde c} \d y  \\
    &= \int_B f_{\gamma_U}(y) \frac{\int_{a^2}^{b^2} \frac{1}{2 \sqrt{u}} n^d f_{\ntwo{\hat{\gamma}^y}^2 }(n^d (\beta - u)) \d u}{2 \int_{0}^{\beta} \frac{1}{2 \sqrt{\tilde u}} n^d f_{\ntwo{\gamma}^2} (n^d(\beta - \tilde u)) \d \tilde u} \d y 
    = \int_B f_{\gamma_U}(y) \frac{\int_{a^2}^{b^2} \frac{1}{\sqrt{u}} f_{\frac{\ntwo{\hat{\gamma}^y}^2}{n^d}}(\beta - u) \d u}{2 \int_{0}^{\beta} \frac{1}{\sqrt{\tilde u}} f_{\frac{ \ntwo{\gamma}^2}{n^d}} (\beta - \tilde u) \d \tilde u} \d y \\
    &= \int_B f_{\gamma_U}(y) \frac{\E \Big[ \frac{1}{\sqrt{\beta - \ntwo{\hat{\gamma}^y}^2/ n^d}} \ind_{(\beta - b^2, \beta - a^2)} \Big(\frac{\ntwo{\hat{\gamma}^y}^2}{n^d} \Big)\Big]}{2\E \Big[ \frac{1}{\sqrt{\beta - \ntwo{\gamma}^2/n^d}} \ind_{[0, \beta)} \Big( \frac{\ntwo{\gamma}^2}{n^d} \Big)\Big]}\d y \eqqcolon \int_B f_{\gamma_U}(y) R_n(y) \d y,
\end{align*}
where $\hat{\gamma}^y$ is defined by $\hat{\gamma}^y_U \equiv y$ and 
\begin{equation}
\label{eq:cond_GFFlaw}
    \begin{aligned}
        \hat{\gamma}^y_{U^c} &\sim \mathcal{N} \Big( G_{\Lambda_n}^{0\text{-}\mathrm{avg.}}\vert_{U^c\times U} G_{\Lambda_n}^{0\text{-}\mathrm{avg.}} \vert_{U^2}^{-1} y, 
        \; G_{\Lambda_n}^{0\text{-}\mathrm{avg.}}\vert_{U^c\times U^c} - G_{\Lambda_n}^{0\text{-}\mathrm{avg.}}\vert_{U^c\times U} G_{\Lambda_n}^{0\text{-}\mathrm{avg.}} \vert_{U^2}^{-1} G_{\Lambda_n}^{0\text{-}\mathrm{avg.}}\vert_{U\times U^c}\Big)\\
        &\eqqcolon \mathcal{N} (\nu(y), \Xi) = \mathrm{Law} (\gamma_{U^c} ~\vert~ \gamma_U = y).
    \end{aligned}
\end{equation}
Note that we may assume $B$ is bounded. Indeed, by the symmetries of the torus, 
\begin{align*}
    \E\big[\ntwo{\gamma_U}^2 ~\big\vert~ \ntwo{\gamma}^2 + c^2 n^d = \beta n^d \big] = \lvert U\rvert \:\E\big[\gamma_0^2 ~\big\vert~ \ntwo{\gamma}^2 + c^2 n^d = \beta n^d \big] \leq \lvert U\rvert \beta.
\end{align*}
Therefore, by choosing $t > 0$ large enough, the probability $\P \big[ \gamma_U \notin B^{\lvert U\rvert}(0,t) ~\big\vert~ \norm{\gamma + c \mathbf{1}_{\Lambda_n}} = \sqrt{\beta n^d} \big]$ can be made arbitrarily small.

Our goal is to prove the following result, which would conclude the proof of the low-temperature regime.
\begin{claim} \label{claim:ratio_conv}
   Uniformly over any fixed bounded $B \ni y$, 
    \begin{align}
    \label{eq:ratio_convergence}
        R_n(y) = \frac{\E \Big[ \frac{1}{\sqrt{\beta - \ntwo{\hat{\gamma}^y}^2/ n^d}} \ind_{(\beta - b^2, \beta - a^2)} \Big(\frac{\ntwo{\hat{\gamma}^y}^2}{n^d} \Big)\Big]}{2\E \Big[ \frac{1}{\sqrt{\beta - \ntwo{\gamma}^2/n^d}} \ind_{[0, \beta)} \Big( \frac{\ntwo{\gamma}^2}{n^d} \Big)\Big]} \xrightarrow{n \rightarrow \infty} \frac{1}{2} \ind_{I}(\sqrt{\beta - \beta_c}).
    \end{align} 
\end{claim}
\noindent Indeed, let $s \in \R \setminus\{\pm \sqrt{\beta - \beta_c}\}$ (the continuity points of the cumulative distribution function of $\sqrt{\beta - \beta_c} X$) and set $I_+ = (-\infty, s] \cap (0,\sqrt{\beta}]$, $I_- = (-\infty, s] \cap [-\sqrt{\beta}, 0]$. By symmetry and the fact that $\gamma_U$ converges in law towards $\psi_U$, the above implies that
\begin{align*}
    \P \Big[ &\gamma_U \in B, c \leq s ~\Big\vert~ \norm{\gamma + c \mathbf{1}_{\Lambda_n}}  = \sqrt{\beta n^d}\Big] 
    = \P \Big[ \gamma_U \in B, c \in I_+ ~\Big\vert~ \norm{\gamma + c \mathbf{1}_{\Lambda_n}}  = \sqrt{\beta n^d}\Big] \ind_{\{I_+ \neq \emptyset\}} \\
    &\hspace{2.2cm}+ \P \Big[ \gamma_U \in B, c \in -I_- ~\Big\vert~ \norm{\gamma + c \mathbf{1}_{\Lambda_n}}  = \sqrt{\beta n^d}\Big]\ind_{\{I_- \neq \emptyset\}}\\
    &\xrightarrow{n \rightarrow \infty} \frac{1}{2}\Big( \ind_{I_+}(\sqrt{\beta - \beta_c}) + \ind_{-I_-}(\sqrt{\beta - \beta_c})\Big) \P[\psi_U \in B]
    = \P\big[\sqrt{\beta - \beta_c} X \leq s\big] \P[\psi_U \in B],
\end{align*}
where $\ind_\emptyset \equiv 0$.

To prove Claim \ref{claim:ratio_conv} for $\beta >\beta_c$, we proceed in three steps (two of which also hold for $\beta = \beta_c$):
\begin{itemize}
    \item \textbf{Claim 1:} $\frac{\ntwo{\gamma}^2}{n^d}$ and $\frac{\ntwo{\hat{\gamma}^y}^2}{n^d}$ converge almost surely to $\beta_c$.
    \item \textbf{Claim 2}: If $\beta > \beta_c$, there exist $c = c(\beta, d), \hat{c} = \hat{c}(\beta, d) > 0$ such that for any $0 < \varepsilon < (\beta - \beta_c)/4$, 
    \begin{align*}
        \P\bigg[\frac{\ntwo{\gamma}^2}{n^d} \geq \beta - \varepsilon\bigg] \leq e^{-c n^{d-2}}, \qquad
        \P\bigg[\frac{\ntwo{\hat{\gamma}^y}^2}{n^d} \geq \beta - \varepsilon \bigg] \leq e^{-\hat{c} n^{d-2}}.
    \end{align*}
    \item \textbf{Claim 3:} The density functions of $\ntwo{\gamma}^2$ and $\ntwo{\hat{\gamma}^y}^2$ are uniformly bounded. The latter also holds uniformly in $y$ in a bounded Borel set $B$.
\end{itemize}
In view of \eqref{eq:ratio_convergence}, the convergence of $R_n(y)$ crucially depends on the behavior of the squared norms $\ntwo{\gamma}^2/n^d$ and $\ntwo{\hat{\gamma}^y}^2/n^d$. Claim 1 captures the expected concentration of both quantities around $\beta_c$, thereby providing a heuristic justification for the limiting behavior of the ratio. To make this precise, however, we must control the contributions from the upper end of the integration domain, specifically the interval $(\beta - \varepsilon, \beta)$, where the integrand may become singular due to the factor $1/\sqrt{\beta - x}$. Claims 2 and 3 are tailored to this task: Claim 2 shows that the probability mass in this region is exponentially suppressed, while Claim 3 ensures that the density functions remain uniformly bounded. A more direct route—namely, proving uniform boundedness of the density of $\ntwo{\hat{\gamma}^y}^2/n^d$ on this interval—would be desirable but is technically involved. Instead, we split $(\beta - \varepsilon, \beta)$ into two subintervals: one where the contribution is negligible due to exponential concentration away from $\beta$, and another, infinitesimally small neighborhood of $\beta$, where we can appeal to the integrability of $x \mapsto 1/\sqrt{\beta - x}$.

\begin{proof}[Proof of Claim \ref{claim:ratio_conv}]
Combined, Claims 1, 2, and 3 yield \eqref{eq:ratio_convergence} in the low-temperature regime. Indeed, for $\varepsilon > 0$ small enough (as in Claim 2), since $\R_+ \ni x \mapsto 1/\sqrt{x}$ away from zero is bounded, by dominated convergence theorem and Claim 1 we get
\begin{align*}
    \E \bigg[ \frac{1}{\sqrt{\beta - \ntwo{\hat{\gamma}^y}^2/ n^d}} \ind_{(\beta - b^2, (\beta - a^2) \wedge (\beta - \varepsilon))} \bigg(\frac{\ntwo{\hat{\gamma}^y}^2}{n^d} \bigg)\bigg] &\xrightarrow{n \rightarrow \infty} \frac{1}{\sqrt{\beta - \beta_c}} \ind_{(a,b)}(\sqrt{\beta - \beta_c}); \\
    \E \bigg[ \frac{1}{\sqrt{\beta - \ntwo{\gamma}^2/n^d}} \ind_{[0, \beta - \varepsilon)} \bigg( \frac{\ntwo{\gamma}^2}{n^d} \bigg)\bigg] &\xrightarrow{n \rightarrow \infty} \frac{1}{\sqrt{\beta - \beta_c}}.
\end{align*}
Let us consider the remaining bits and show that they are vanishing in the limit. Let $\delta > 0$ be small,
\begin{align*}
    0 &\leq \E \bigg[ \frac{1}{\sqrt{\beta - \ntwo{\gamma}^2/n^d}} \ind_{[\beta - \varepsilon, \beta)} \bigg( \frac{\ntwo{\gamma}^2}{n^d} \bigg)\bigg] \\
    &\leq \frac{n^d}{\sqrt{\delta}} \underbrace{\P \bigg[\frac{\ntwo{\gamma}^2}{n^d} \in [\beta - \varepsilon, \beta - \delta/n^{2d}) \bigg]}_{\overset{\text{Claim $2$}}{\leq} e^{-c n^{d-2}}} + \sqrt{n^d} \underbrace{\E \bigg[ \frac{1}{\sqrt{\beta n^d - \ntwo{\gamma}^2}} \ind_{\beta n^d + [- \delta/n^d, 0)} ( \ntwo{\gamma}^2)\bigg]}_{\overset{\text{Claim $3$}}{\leq} \norm{f_{\ntwo{\gamma}^2}}_{L^\infty} \int_0^{\delta/n^d} \frac{\d t}{\sqrt{t}} = 2\sqrt{\frac{\delta}{n^d}} \norm{f_{\ntwo{\gamma}^2}}_{L^\infty}}.
\end{align*}
By taking limit $n \rightarrow \infty$, we eliminate the first term, the second one can be made arbitrarily small by adjusting $\delta$. By replacing $\gamma$ with $\hat{\gamma}^y$, we recover the same result for the latter.
The claim concerning $R_n(y)$ follows immediately from the above two observations.
\end{proof}

We now prove the remaining claims.
\begin{proof}[Proof of Claim 1 and Claim 2]
We will show that there exists $c>0$ such that for any $t > 0$ and for all $n$ sufficiently large,
\begin{align}
\label{eq:concentration_of_measure}
    \P[S_n > t] \vee \P[S_n < -t] \leq e^{-c n^{d-2} (\sqrt{\beta_c + t/2} - \sqrt{\beta_c})^2},
\end{align}
where $S_n \in \Big\{\frac{\ntwo{\gamma}^2 - \E[\ntwo{\gamma}^2]}{n^d}, \frac{\ntwo{\hat{\gamma}^y}^2 - \E[\ntwo{\hat{\gamma}^y}^2]}{n^d}\Big\}$. Together with the observations that
\begin{align}\label{eq:observation}
    G^{0\text{-}\mathrm{avg.}}_{\Lambda_n}(0,0) \xrightarrow{n \rightarrow \infty} \beta_c && \text{and} &&\frac{1}{n^d} \E[\ntwo{\hat{\gamma}^y_{U^c}}^2] = \frac{1}{n^d}\mathrm{Trace}(C) + \frac{1}{n^d} \ntwo{\nu(y)}^2 \xrightarrow{n \rightarrow \infty} \beta_c + 0,
\end{align}
\eqref{eq:concentration_of_measure} immediately implies Claim 1 by Borel-Cantelli lemma (as the estimates are clearly summable in $n$) and Claim 2 by setting $t = \beta - \beta_c - \varepsilon \pm o(1)$. 
As for \eqref{eq:observation}, note that by Proposition \ref{prop:props:zeroAvGfct}, $G_{\Lambda_n}^{0\text{-}\mathrm{avg.}}(x,y)$ converges uniformly over $U^2$ to $G_{\Z^d}(x,y)$, and so do the corresponding eigenvalues. The latter matrix $G_{\Z^d}\vert_{U \times U}$ is positive-definite, thus, there exists $c>0$ (depending only on $U$) such that 
$$0<c\leq \lambda_{\min}\big(G_{\Lambda_n}^{0\text{-}\mathrm{avg.}}\vert_{U\times U}\big) \leq \lambda_{\max} \big(G_{\Lambda_n}^{0\text{-}\mathrm{avg.}}\vert_{U\times U}\big) \leq \mathrm{Trace} \big(G_{\Lambda_n}^{0\text{-}\mathrm{avg.}}\vert_{U\times U}\big) \leq 2\lvert U \rvert \beta_c.$$ 
Combined with \eqref{eq:0avgGfct_polydec}, this yields
\begin{equation} 
\label{eq:bound_shift}
    \begin{aligned}
        \frac{1}{n^d} \ntwo{\nu(y)}^2 &= \frac{1}{n^d} \big\lVert G_{\Lambda_n}^{0\text{-}\mathrm{avg.}}\vert_{U^c\times U} G_{\Lambda_n}^{0\text{-}\mathrm{avg.}} \vert_{U^2}^{-1} y\big\rVert^2 
        \leq \frac{\ntwo{y}^2}{c^2 n^d} \sum_{x \in U^c, z \in U} G_{\Lambda_n}^{0\text{-}\mathrm{avg.}}(x,z)^2\\
        &\leq \frac{\tilde C(U, d)}{n^d} \big( n \ind_{\{d=3\}} + \log n \ind_{\{d=4\}} + \ind_{\{d\geq 5\}} \big) \frac{\ntwo{y}^2}{c^2},
    \end{aligned}
\end{equation}
and fully analogously that $\frac{1}{n^d}\mathrm{Trace}(\Xi) = \beta_c - o(1)$ with the error uniform in $y \in B$ (bounded).

To prove \eqref{eq:concentration_of_measure}, we use the following concentration inequality for Lipschitz functions of a standard normal vector $\overline{Z}$, \cite[Theorem 3.8, (3.10)]{Massart}: if $f: \R^N \rightarrow \R$ is Lipschitz, $x>0$, then 
$$\P[f(\overline{Z}) \geq \E[f(\overline{Z})] + x] \leq 2 \frac{1}{\sqrt{2\pi}} \int_{x/\mathrm{Lip}(f)}^\infty e^{-u^2/2} \d u \leq e^{-\frac{1}{2} ({x}/{\mathrm{Lip}(f)})^2}.$$

Let us define $f: \R^{n^d} \rightarrow \R$, $x \mapsto \big\lVert \sqrt{G^{0\text{-}\mathrm{avg.}}_{\Lambda_n}} x\big\rVert$ and $\hat{f}^y: \R^{U^c} \rightarrow \R$, $x \mapsto \ntwo{\sqrt{\Xi} x + \nu(y)}$. Both these functions are Lipschitz with Lipschitz constants bounded by the largest eigenvalues of $\sqrt{G^{0\text{-}\mathrm{avg.}}_{\Lambda_n}}$ and $\sqrt{\Xi}$, respectively. These are $\eta_{2}^{-1/2} = \Theta(n)$ and $\sqrt{\mu_1} = \Theta(n)$, respectively (cf. \eqref{eq:EV_asympt} and \eqref{eq:auxiliary_estim}).
Let $S_n = \frac{\ntwo{\gamma}^2 - \E[\ntwo{\gamma}^2]}{n^d}$. We observe that for $\overline{Z} \sim \mathcal{N}(0, \mathrm{Id}_{n^d})$, an arbitrarily small $0 < \delta < t/4$, and all $n$ sufficiently large:
\begin{align*}
    \P[S_n \geq t] &\leq \P[\ntwo{\gamma} \geq \sqrt{n^d} \sqrt{\beta_c + t -\delta}] \\
    &\leq \P\big[f(\overline{Z}) \geq \E[f(\overline{Z})] + \sqrt{n^d} \sqrt{\beta_c + t -\delta} - \E[f^2(\overline{Z})]]^{1/2}\big] \\
    &\leq \exp \bigg( -\frac{1}{2} \Theta(n^{d-2}) \big(\sqrt{\beta_c + t -\delta} - \sqrt{\beta_c - o(1)}\big)^2\bigg) \leq e^{-cn^{d-2} (\sqrt{\beta_c + t/2} - \sqrt{\beta_c})^2}. 
\end{align*}
With $\hat{f}^y$ in place of $f$, we obtain an analogous bound for $S_n = \frac{\ntwo{\hat{\gamma}^y}^2 - \E[\ntwo{\hat{\gamma}^y}^2]}{n^d}$. To estimate $\P[S_n \leq -t]$, consider $-f$ and $-\hat{f}^y$.
\end{proof}

\begin{proof}[Proof of Claim 3]
Recall that $\Xi$ is positive semi-definite as a covariance matrix, with eigenvalues $\mu_1 \geq \mu  \geq\ldots \geq \mu_{\lvert U^c\rvert} \geq 0$. We set $u_n \coloneqq \lvert U^c\rvert$ and observe that $\mu_{u_n} = 0$ with the unit eigenvector $p^{u_n} = \frac{1}{\sqrt{u_n}} \mathbf{1}_{U^c}$. 
Let $P = (p^1, \ldots, p^{u_n})$ and $Q$ be the orthonormal matrices diagonalizing $\Xi$ and $G_{\Lambda_n}^{0\text{-}\mathrm{avg.}}$, respectively, i.e., $P^T \Xi P = \mathrm{diag} \big( (\mu_k)_{k=1}^{u_n}\big)$ and $Q^T G_{\Lambda_n}^{0\text{-}\mathrm{avg.}} Q = \mathrm{diag} \big( 0, (1/\eta_k)_{k=2}^{n^d}\big)$, where $0=\eta_1 < \eta  \leq \ldots\leq \eta_{n^d}$ are the eigenvalues of $-\Delta_{\Lambda_n}$. Let $(Y_i)_i$ be i.i.d. standard normal random variables. Then, if we set $h(y) \coloneqq P^T \nu(y)$,
\begin{equation}
\label{eq:EV_representation}
    \begin{aligned}
        \norm{\gamma} ^2 \overset{\text{law}}{=} \sum_{k=2}^{n^d} \frac{1}{\eta_k} Y_k^2;\qquad
        \norm{\hat{\gamma}^y_{U^c}} ^2 \overset{\text{law}}{=} \sum_{k=1}^{u_n-1} (h_k(y) + \sqrt{\mu_k} Y_k)^2 + h_{u_n}(y)^2.
    \end{aligned}
\end{equation}
By Young's convolution inequality since $\norm{f}_{L^1} = 1$ for a probability density function $f$, 
\begin{align*}
    \norm{f_{\norm{\hat{\gamma}^y}}}_{L^\infty} \leq \norm{f_{(h_l(y) + \sqrt{\mu_l} Y_l)^2 + (h_i(y) + \sqrt{\mu_i} Y_i)^2}}_{L^\infty}, && \norm{f_{\norm{\gamma} ^2}}_{L^\infty} \leq \norm{f_{Y_k^2/\eta_k + Y_j^2/\eta_j}}_{L^\infty}
\end{align*}
for any $k\neq j, l \neq i$ (the choice will be specified later). 

To complete the proof, it now suffices to show that there exists $K > 0$ uniform in $y$ (here, over all $\R^U$) such that $$\norm{f_{(h_l(y) + \sqrt{\mu_l} Y_l)^2 + (h_i(y) + \sqrt{\mu_i} Y_i)^2}}_{L^\infty} \leq K,$$ as the remaining case can be easily deduced from this one. 
To this end, fix $x \in \R_+$ and, for simplicity, write $(h_l(y) + \sqrt{\mu_l} Y_l)^2 + (h_i(y) + \sqrt{\mu_i} Y_i)^2 = (a + c Y)^2 + (b + d Z)^2$ (with $Y, Z$ independent standard normal random variables). The density of $(a/c + Y)^2$ at $x$ is given by $$\frac{1}{2 \sqrt{x}} \Big(f_Y(\sqrt{x} - \lvert a/c\rvert) + f_Y(\sqrt{x} + \lvert a/c\rvert)\Big) \leq \frac{1}{\sqrt{x}}.$$ Therefore,
\begin{align*}
    f_{(a + c Y)^2 + (b + d Z)^2}(x) = \int_0^x f_{c^2(a/c + Y)^2} (z) f_{d^2(b/d + Z)^2} (x-z) \d z \leq \frac{1}{c d} \int_0^1 \frac{1}{\sqrt{u (1-u)}} \d u = \frac{\pi}{c d}.
\end{align*}
If $c, d > 0$ are uniformly bounded away from zero, the claim follows. In our setting we can, for example, pick $(l,i) = (\lceil u_{n}/2\rceil, \lceil u_{n}/2\rceil + 1) = (k, j)$, then by \eqref{eq:EV_asympt} and \eqref{eq:auxiliary_estim}: $\mu_{l}, \mu_i, 1/ \eta_k, 1/\eta_j = \Theta(1)$.
\end{proof}

\subsection{At the criticality}
\label{subsec:spher_crit}

In this section, we analyze the model at the critical temperature $\beta = \beta_c$. Unlike in the other two regimes, the case $d=3$ requires a slightly different treatment compared to higher dimensions. Our proof for $d \geq 4$ combines arguments from both non-critical regimes and makes use of Proposition \ref{prop:dens_conv_1D} along with \eqref{eq:0avgGfct_polydec}. For $d=3$, the argument relies heavily on the fact that $\beta_c - G_{\Lambda_n}^{0\text{-}\mathrm{avg.}}(0,0) = \Theta(1/n) > 0$ (see \eqref{eq:negativity_Greens_diff}), and on the observation that the variance of $\ntwo{\gamma}^2$ is of the same order as the square of the largest eigenvalue of $G_{\Lambda_n}^{0\text{-}\mathrm{avg.}}(0,0)$.

We start by recalling that, except for the concluding argument and Claim 2, all steps from the low-temperature regime remain valid at $\beta = \beta_c$. Our goal is to prove \eqref{eq:ratio_convergence} in this setting, which amounts to showing that, uniformly in $y$ bounded,
\begin{align*}
    R_n(y) = \frac{\E \Big[ \frac{1}{\sqrt{\beta_c - \ntwo{\hat{\gamma}^y}^2/ n^d}} \ind_{(\beta_c - b^2, \beta_c - a^2)} \Big(\frac{\ntwo{\hat{\gamma}^y}^2}{n^d} \Big)\Big]}{2\E \Big[ \frac{1}{\sqrt{\beta_c - \ntwo{\gamma}^2/n^d}} \ind_{[0, \beta_c)} \Big( \frac{\ntwo{\gamma}^2}{n^d} \Big)\Big]},
\end{align*}
converges to $1/2$ if $a= 0$, and to zero otherwise. This would imply that, as desired, the conditional law of $(\gamma_U, c)$, given $\ntwo{\gamma + c \mathbf{1}_{\Lambda_n}}^2 = \beta_c n^d$, converges to $(\phi_U, 0)$, where $\phi$ is the GFF on $\Z^d$. 
Note that if $a>0$, by Fatou's lemma applied to the denominator, dominated convergence theorem to the numerator, Claim 1 in the previous section, and since $\P[\ntwo{\gamma}^2 < \beta_c n^d]$ is uniformly bounded away from zero (as we will see below in the proof of the case $a=0$), we obtain that $\limsup_n R_n(y) = 0$. This also yields that without loss of generality we can assume that $b^2 = \beta_c$. So, hereinafter let $a=0, b^2 = \beta_c$. Also recall that $u_n = |U^c|$.

We define the following variables:
\begin{align*}
    X_n &\coloneqq \frac{\ntwo{\gamma}^2 - \E[\ntwo{\gamma}^2]}{\sqrt{\mathrm{Var}[\ntwo{\gamma}^2]}},
    \qquad\hat{X}_n\coloneqq \frac{\ntwo{\hat{\gamma}^y}^2 - \E[\ntwo{\hat{\gamma}^y}^2]}{\sqrt{\mathrm{Var}[\ntwo{\hat{\gamma}^y}^2]}};\\
    T_n &\coloneqq \frac{\beta_c n^d - \E[\ntwo{\gamma}^2]}{\sqrt{\mathrm{Var}[\ntwo{\gamma}^2]}}, 
    \qquad\:\hat{T}_n\coloneqq \frac{\beta_c n^d - \E[\ntwo{\hat{\gamma}^y}^2]}{\sqrt{\mathrm{Var}[\ntwo{{\gamma}^y}^2]}}.
\end{align*}
Then, using the fact (proved below) that 
\begin{equation}
    \label{eq:var_error}
    \begin{aligned}
        \mathrm{Var}[\ntwo{\hat{\gamma}^y}^2] &= \mathrm{Var}[\ntwo{{\gamma}}^2] + \mathcal{O} \Big( n^2 \big(n \ind_{\{d=3\}} + \log n \ind_{\{d= 4\}} + \ind_{\{d \geq 5\}}\big) \Big)\\
        &= (1+o(1)) \mathrm{Var}[\ntwo{{\gamma}}^2] = \Theta \big(n^4 \ind_{\{d=3\}} + n^d \log n \ind_{\{d=4\}} + n^d \ind_{\{d \geq 5\}}\big),
    \end{aligned}
\end{equation}
we can rewrite $R_n(y)$ as 
\begin{align*}
    R_n(y) = \frac{1 + o(1)}{2} \frac{\E \Big[ \frac{1}{\sqrt{\hat{T}_n - \hat{X}_n}} \ind_{\{\hat{X}_n < \hat{T}_n\}}\Big]}{\E \Big[ \frac{1}{\sqrt{T_n - X_n}} \ind_{\{X_n < T_n\}}\Big]}.
\end{align*}
Since $\E[\ntwo{\hat{\gamma}^y}^2] = G^{0\text{-}\mathrm{avg.}}_{\Lambda_n}(0,0) u_n + o(n^{d/2})$ (follows analogously to \eqref{eq:bound_shift}), and by combining \eqref{eq:0avgGfct_polydec} at $y = 0$ with \eqref{eq:negativity_Greens_diff}, we see that both $T_n$ and $\hat{T}_n$ converge to the same finite limit $T$ (depending on $d$): for $d \geq 4$, $T = 0$; for $d = 3$, $T > 0$. As for $\hat{X}_n$ and $X_n$, we have the following.
\begin{itemize}
    \item \textbf{Claim A:} $X_n$ and $\hat{X}_n$ converge in law to the same non-trivial (by symmetry, equivalent to finite non-zero) real random variable $X$. For $d \geq 4$, $X$ is a standard normal random variable.
    \item \textbf{Claim B:} The densities of $\hat{X}_n$ and $X_n$ are uniformly bounded.
\end{itemize}

Using these two observations and applying Portmanteau theorem to the upper/lower semicontinuous functions $x \mapsto \frac{1}{\sqrt{x}} \ind_{\{x \geq \varepsilon\}}$ and $x \mapsto \frac{1}{\sqrt{x}} \ind_{\{x > 0\}}$, respectively, for $\varepsilon > 0$ arbitrary small such that $\varepsilon < T(d=3)/2$, we obtain that 
\begin{align*}
    \limsup_{n \rightarrow \infty} \E \bigg[ \frac{1}{\sqrt{\hat{T}_n - \hat{X}_n}} \ind_{\{\hat{X}_n < \hat{T}_n\}}\bigg] &\leq 2M \sqrt{\varepsilon} + \E\bigg[ \frac{1}{\sqrt{T - X}} \ind_{\{X < T - \varepsilon\}}\bigg];\\
    \liminf_{n \rightarrow \infty} \E \bigg[ \frac{1}{\sqrt{T_n - X_n}} \ind_{\{X_n < T_n\}}\bigg] &\geq \E\bigg[ \frac{1}{\sqrt{T - X}} \ind_{\{X < T\}} \bigg] \geq \E\bigg[ \frac{1}{\sqrt{T - X}} \ind_{\{X < T - \varepsilon\}}\bigg]. 
\end{align*}
Here $M>0$ is the uniform bound on the densities of $\hat{X}_n$ and $X_n$.
Since $X$ is non-trivial and of mean zero, the probability of it being non-positive is strictly positive. Hence, if $d=3$, since $T - \varepsilon > 0$ in this case, there exists an absolute (independent of $\varepsilon < T/2$) constant $p>0$ such that $\E\Big[ \frac{1}{\sqrt{T - X}} \ind_{\{X < T - \varepsilon\}}\Big] > p$. Existence of such a constant for $d\geq 4$ even though $T(d\geq 4) = 0$ is clear since $X$ is standard Gaussian. Thus, $R_n(y) \leq \frac{1+o(1)}{2} \Big( 1 + \frac{2M\sqrt{\varepsilon}}{p}\Big)$.
By taking $\varepsilon>0$ much smaller than $p$, we conclude that $R_n$ is less or equal to $1/2$ in the limit. By reversing the roles of the numerator and denominator, we derive that $R_n(y)$ is bounded from below by $1/2$. The proof of the critical-temperature regime is thus complete, provided \eqref{eq:var_error} and Claims A and B are verified.

We now proceed to discuss the remaining proofs of \eqref{eq:var_error} and Claims A and B. Our arguments for the latter will significantly differ for $d \geq 4$ and $d=3$. 
\begin{proof}[Proof of \eqref{eq:var_error}]
   Using the eigenvalue representation \eqref{eq:EV_representation}, we easily observe that $\mathrm{Var}[\ntwo{\hat{\gamma}^y}^2] = 2 \sum_{k=1}^{u_n-1} (\mu_k^2 + 2 h_k(y)^2 \mu_k)$. By \eqref{eq:EV_asympt}, \eqref{eq:auxiliary_estim}, and  \eqref{eq:bound_shift}, $$\mu_1 \ntwo{h(y)}^2 \ll \Theta \big(n^4 \ind_{\{d=3\}} + n^d \log n \ind_{\{d=4\}} + n^d \ind_{\{d \geq 5\}}\big) = 2 \sum_{k=1}^{u_n-1} \mu_k^2$$ uniformly over $y$ in a fixed bounded set. Combined with \eqref{eq:0avgGfct_polydec}, this further implies that
    \begin{align*} 
        \frac{1}{2} \mathrm{Var}[\ntwo{\hat{\gamma}^y}^2] &= \sum_{k=1}^{u_n-1} \mu_k^2 + \mathcal{O} \big( n^2 \big(n \ind_{\{d=3\}} + \log n \ind_{\{d= 4\}} + \ind_{\{d \geq 5\}}\big) \big);\\  
        \sum_{k=1}^{u_n-1} \mu_k^2 &= \mathrm{Trace}(\Xi^2) = \frac{1}{2} \mathrm{Var}[\ntwo{{\gamma}}^2] + A_1 + A_2 + A_3, 
    \end{align*}
    where $\hat{\Lambda}_n = [-n/2, n/2)^d \cap \Z^d$ and 
    \begin{align*}
        A_1 &\coloneqq \sum_{x, y \in U^c}  G_{\Lambda_n}^{0\text{-}\mathrm{avg.}}(x,y)^2 - \frac{1}{2} \mathrm{Var}[\ntwo{{\gamma}}^2] = - 2 \sum_{\substack{x \in U^c, \\ z \in U}} G_{\Lambda_n}^{0\text{-}\mathrm{avg.}}(x,z)^2 
        \\&\phantom{:}= \mathcal{O} \big( \big(n \ind_{\{d=3\}} + \log n \ind_{\{d= 4\}} + \ind_{\{d \geq 5\}}\big) \big); \\
        A_2 &\coloneqq \sum_{\substack{z, z',\\ w, w' \in U\\ x,y \in U^c}} G_{\Lambda_n}^{0\text{-}\mathrm{avg.}}(x,z) G_{\Lambda_n}^{0\text{-}\mathrm{avg.}} \vert_{U^2}^{-1}(z, z') G_{\Lambda_n}^{0\text{-}\mathrm{avg.}}(z',y) G_{\Lambda_n}^{0\text{-}\mathrm{avg.}}(y,w) G_{\Lambda_n}^{0\text{-}\mathrm{avg.}} \vert_{U^2}^{-1}(w, w') G_{\Lambda_n}^{0\text{-}\mathrm{avg.}}(w',x)\\
        &\phantom{:}=\mathcal{O}\Big( \big( \int_{1}^n r^{4-2d} r^{d-1} \d r\big)^2\Big) = \mathcal{O} \big( n^2 \ind_{\{d=3\}} + \log^2 n \ind_{\{d= 4\}} + \ind_{\{d \geq 5\}}\big);\\ 
        A_3 &\coloneqq - 2\sum_{\substack{z, z' \in U\\ x,y \in U^c}} G_{\Lambda_n}^{0\text{-}\mathrm{avg.}} \vert_{U^2}^{-1}(z, z') G_{\Lambda_n}^{0\text{-}\mathrm{avg.}}(x,y) G_{\Lambda_n}^{0\text{-}\mathrm{avg.}}(y,z) G_{\Lambda_n}^{0\text{-}\mathrm{avg.}}(z',x) \\
        &\phantom{:}= \mathcal{O} \Big( \sum_{\substack{x\neq y \in \hat{\Lambda}_n \setminus\{0\}}} \lvert x\rvert^{2-d} \lvert y \rvert^{2-d} \d(x,y)^{2-d} \Big)\\ 
        &\phantom{:}= \mathcal{O} \bigg( 2 \sum_{x \in \hat{\Lambda}_n \setminus \{0\}} \lvert x\rvert^{4-2d} 
        \sum_{\substack{y \in \hat{\Lambda}_n \setminus \{0,x\}\\ \lvert y\rvert \geq \lvert x\rvert}} \d(x,y)^{2-d} \bigg) \\
        &\phantom{:}= \mathcal{O} \Big( n^2 \sum_{x \in \hat{\Lambda}_n \setminus \{0\}} \lvert x\rvert^{4-2d} \Big) = \mathcal{O} \Big( n^2 \big(n \ind_{\{d=3\}} + \log n \ind_{\{d= 4\}} + \ind_{\{d \geq 5\}}\big) \Big). 
    \end{align*}
    In conclusion, as desired
    \begin{align*}
        \mathrm{Var}[\ntwo{\hat{\gamma}^y}^2] &= \mathrm{Var}[\ntwo{{\gamma}}^2] + \mathcal{O} \Big( n^2 \big(n \ind_{\{d=3\}} + \log n \ind_{\{d= 4\}} + \ind_{\{d \geq 5\}}\big) \Big)\\
        &= (1+o(1)) \mathrm{Var}[\ntwo{{\gamma}}^2].
    \end{align*}
\end{proof}

\subsubsection*{Proof of Claims A and B for $d \geq 4$}
The key observation in this case is the following.

\textbf{Claim C:}
    The densities of $X_n$ and $\hat{X}_n$\footnote{up to removing the last $\lvert U\rvert$ terms in the eigenvalue decomposition \eqref{eq:EV_representation} of $\hat{X}_n$ containing $\mu_{u_n - q}$ for $1 \leq q \leq \lvert U\rvert$} converge uniformly towards the density function of a standard normal random variable.
  
The proof of this result is based on Proposition \ref{prop:dens_conv_1D} (note that for $d=3$ even Lindeberg's condition fails) and the eigenvalue representation \eqref{eq:EV_representation}. Before proving Claim C, we explain how it yields Claims A and B. Note that by the eigenvalue decomposition we can write $\hat{X}_n$ as a sum of two independent random variables $P_n + L_n$, where $P_n$ is the ``good'' part treated in Claim C and $L_n$ is the sum of the remaining $\lvert U\rvert$ terms. Then, by Claim C, $P_n$ converges in law to a standard normal random variable and its density is uniformly bounded. Note that $L_n = \frac{\sum_{u_n - \lvert U\rvert}^{u_n} \mu_k (Y_k^2 - 1) + 2h_k(y) \sqrt{\mu_k} Y_k}{\sqrt{\mathrm{Var}[\ntwo{\hat{\gamma}^y}^2]}}$. Since these $\mu_k$'s are uniformly bounded and $\ntwo{h(y)}$ by \eqref{eq:bound_shift} is growing slower than any polynomial, it is straightforward to verify that $L_n$ admits a density function and converges almost surely to zero. This implies that $L_n + P_n$ converges in law to a standard Gaussian variable (Claim A) and that the density of $\hat{X}_n$ is uniformly bounded as a convolution of a bounded function and an integrable non-negative (with unit $L^1$-norm) function (Claim B). 

\begin{proof}[Proof of Claim C]
We check that the required assumptions of Proposition \ref{prop:dens_conv_1D} are satisfied by $\mathbf{Y}_n \coloneqq ((h_k(y) + \sqrt{\mu_k} Y_k)^2 - \mu_k - h_k(y)^2)_{k=1}^{u_n - \lvert U\rvert - 1}$ contributing to $\hat X_n$, and thus, also by $(\frac{1}{\eta_k}(Y_k^2 - 1))_{k=2}^{n^d}$ contributing to $X_n$, since the latter can be recovered from the former by setting $h(y) \equiv 0$ and $U = \emptyset$. More precisely, we consider independent families $(\mathbf{Y}_n)_n$ that form a triangular array. For better readability, we do not use double-indexing for the elements of each $\mathbf{Y}_n$.

The first assumption of the proposition can be directly deduced from the corresponding verification in the high-temperature regime since the eigenvalues $(\mu_k)_{k=1}^{u_n-\lvert U\rvert - 1}$ of $\Xi$ and $(1/\eta_k)_{k=2}^{n^d}$ of $G_{\Lambda_n}^{0\text{-}\mathrm{avg.}}$ are uniformly bounded away from zero. The second condition (up to reordering which will be done for the verification of the fourth criterion) is clear since $\sigma^2_{k} = 2 \mu_k^2 + 4 h_k(y)^2 \mu_k < \infty$. 

From this point on, the index $k$ refers to the original ordering, while $\tilde k$ denotes the ordering by decreasing variance.
Recall that Lindeberg's condition (the third condition of the proposition) follows from Lyapunov's condition, which in our setting for $\delta = 2$ appears as
\begin{align*}
    \frac{1}{s_n^4} \sum_{k=1}^{u_n-\lvert U\rvert - 1} \E\big[((h_k(y) + \sqrt{\mu_k} Y_k)^2 - \mu_k - h_k(y)^2)^4 \big] \xrightarrow{n \rightarrow \infty} 0,
\end{align*}
where $s_n^2 = \sum_{k=1}^{u_n-\lvert U\rvert - 1} (2 \mu_k^2 + 4 h_k(y)^2 \mu_k) = 2(1+o(1)) \sum_{k=1}^{u_n-\lvert U\rvert - 1} \mu_k^2$ by \eqref{eq:bound_shift}. We observe that this convergence indeed takes place since 
\begin{align*}
    \sum_{k=1}^{u_n - \lvert U\rvert - 1}\E\big[((h_k(y) + \sqrt{\mu_k} Y_k)^2 - \mu_k - h_k(y)^2)^4 \big] &\leq c \sum_{k=1}^{u_n - \lvert U\rvert - 1} (\mu_k^4 + \mu_k^2 h_k(y)^4 + \mu_k^3 h_k(y)^2) \\
    &\leq c \bigg(\sum_{k=1}^{u_n - \lvert U\rvert - 1} \mu_k^4 + \mu_1^2 \ntwo{h(y)}^4 + \mu_1^3 \ntwo{h}^2\bigg),
\end{align*}
and by \eqref{eq:EV_asympt}, \eqref{eq:auxiliary_estim}, and \eqref{eq:bound_shift}, for $d \geq 4$, this is of order $\mathcal{O} \big(n^8\ind_{\{d \leq 7\}} + n^d \log n \ind_{\{d = 8\}} + n^d \ind_{\{d \geq 9\}}\big)$, while $s_n^4 = \Theta\big(n^8 (\log n)^2 \ind_{\{d = 4\}} + n^{2d} \ind_{\{d \geq 5\}}\big)$.

For the fourth criterion, consider the set $J = \{\sigma_k^2: k \leq u_n - \lvert U\rvert -1 \text{ such that } \lvert h_k(y)\rvert > 1/\log n\}$—there are at most $\log n \ntwo{h(y)}^2$ such indices $k$. 
Notice that since $(\mu_k)_{k \leq u_n - \lvert U\rvert - 1}$ are uniformly bounded away from zero and $\sigma_{k}^2 = 2 \mu_k^2 + 4\mu_k h_k(y)^2$, the order of the variances on the complement of $J$ will be determined by the order of $\mu_k$'s. Furthermore, elements of $J$ after reordering can only be sent forward or remain at their place. Hence, for any $L \geq 1$,
\begin{align*}
    \{\sigma_k^2: k \geq L\} \setminus J \quad\subset\; \{\sigma_{\tilde k}^2: \tilde k \geq L\} \setminus J \;\subset\quad \{\sigma_k^2: k \geq L - 2\lvert J\rvert\} \setminus J.
\end{align*}
Let $\varepsilon \in (0,1)$ be fixed but arbitrarily small. We choose the constants as follows:
\begin{itemize}
    \item for $d \geq 5$, let $l^* = l_* \coloneqq \inf\{l \geq \varepsilon n^d: l - 2\lvert J\rvert \notin J, l \in \Z\}$, $K = K(\varepsilon)$ a large constant depending only on $\varepsilon$;
    \item for $d=4$, let $l^*\coloneqq \inf\{l \geq \varepsilon n^d: l - 2\lvert J\rvert \notin J, l \in \Z\}$, $l_*\coloneqq \inf\{l \geq n^{d-\varepsilon}: l - 2\lvert J\rvert \notin J, l \in \Z\}$, $K(n)= n^\varepsilon$.  
\end{itemize}
Note that $l^*, l_*$ can at most be equal to the reference value (either $\varepsilon n^d$ or $n^{d-\varepsilon}$) plus $\lvert J\rvert = o(n^\delta)$ for any $\delta > 0$. Moreover, for $L \in \{l_*, l^*\}$,
\begin{align*}
    \{\sigma_k^2: k \geq L\} \setminus J \quad\subset\; \{\sigma_{\tilde k}^2: \tilde k \geq L\} \;\subset\quad \{\sigma_k^2: k \geq L - 2\lvert J\rvert\}.
\end{align*}
This implies that the $l^*$-th element of the reordered sequence $(\sigma^2_{\tilde k})_{\tilde k}$ is smaller or equal to $\sigma^2_{l^* - 2\lvert J\rvert}$ (of the original order). 

We first verify the assumptions of the fourth criterion for $d\geq 5$. By \eqref{eq:EV_asympt} and \eqref{eq:auxiliary_estim}, for $u_n-\lvert U\rvert - 1 \geq k \geq l_* - 2\lvert J\rvert, k \notin J$, $0 < c_* \leq \mu_k = \Theta(n^2/k^{2/d}) = \Theta(1) \leq \mu_{l_* - 2\lvert J\rvert} \leq c^* < \infty$, $\sigma_{k}^2 = 2\mu_k^2 + 4 \mu_k h_k(y)^2 = \Theta(1)$ uniformly. Thus, for $Z_{k} = (h_k(y) + \sqrt{\mu_k} Y_k)^2 - \mu_k - h_k(y)^2$, $Z \sim \mathcal{N}(0,1)$,
\begin{align*}
    \frac{\sum_{\tilde k \geq l_*}^{u_n-\lvert U\rvert - 1} \E[Z_{\tilde k}^2 \ind_{\{\lvert Z_{\tilde k}\rvert > K\}}]}{\sum_{\tilde k \geq l_*}^{u_n-\lvert U\rvert - 1} \sigma_{\tilde k}^2} &\leq \frac{\sum_{k \geq l_* - 2\lvert J\rvert, k \notin J}^{u_n-\lvert U\rvert - 1} \E[Z_k^2 \ind_{\{\lvert Z_k\rvert > K\}}] + \lvert J\rvert \sigma^2_{l_* - 2\lvert J\rvert}}{\sum_{k \geq l_*, k \notin J}^{u_n-\lvert U\rvert - 1} \sigma^2_{k}}\\
    &\leq \frac{\sum_{k \geq l_*, k \notin J}^{u_n-\lvert U\rvert - 1} \E[Z_k^2 \ind_{\{\lvert Z_k\rvert > K\}}] + 3\lvert J\rvert \sigma^2_{l_* - 2\lvert J\rvert}}{\sum_{k \geq l_*, k \notin J}^{u_n-\lvert U\rvert - 1} \sigma^2_{k}}\\
    &\leq \frac{\E\Big[(Z^2 - 1)^2 \ind_{\big\{\mu_{l_* - 2\lvert J\rvert} Z^2 + \frac{2 \sqrt{\mu_{l_* - 2\lvert J\rvert}}}{\log n} \lvert Z\rvert > K \big\}}\Big] \sum_{k \geq l_*, k \notin J}^{u_n-\lvert U\rvert - 1} \mu_k^2}{2 \sum_{k \geq l_*, k \notin J}^{u_n-\lvert U\rvert - 1} \mu_k^2} \\
    &\qquad\qquad +\frac{4\mu_1 \ntwo{h(y)}^2 + \frac{1}{\log n} 4 \sqrt{3} \sum_{k \geq l_*, k \notin J}^{u_n-\lvert U\rvert - 1} \mu_k^{3/2} + o(n)}{2 \sum_{k \geq l_*, k \notin J}^{u_n-\lvert U\rvert - 1} \mu_k^2}\\
    &\leq \frac{1}{2} \E\big[(Z^2 - 1)^2 \ind_{\{ Z^2 + \lvert Z\rvert > K/(2 c^*)\}}\big] + o(n^{3-d}) + \Theta \Big( \frac{1}{\log n}\Big)
\end{align*}
can be made arbitrarily small by choosing $K > c^*$ sufficiently large. Furthermore,  
\begin{align*}
    \frac{\sum_{\tilde k \geq l_*}^{u_n-\lvert U\rvert - 1} \sigma_{\tilde k}^2}{s_n^2} \geq \frac{\sum_{k \geq l_*, k \notin J}^{u_n-\lvert U\rvert - 1} \sigma_{k}^2}{s_n^2} &\geq \frac{\sum_{k = \varepsilon n^d + n}^{u_n-\lvert U\rvert - 1} 2 \mu_k^2}{\Theta (n^d)} \\
    &\geq \frac{\Theta \big(n^4 \sum_{k = \varepsilon n^d + n^{d/2}}^{u_n-\lvert U\rvert - 1} k^{-\frac{4}{d}} \big)}{\Theta (n^d)} = (1 - \varepsilon^{\frac{d-4}{d}}) \Theta(1); \\
    \frac{u_n-\lvert U\rvert - 1 - l^*}{\sigma^2_{l^*, \text{reorder.}} \vee K^2} &\geq \frac{u_n-\lvert U\rvert - 1 - l^*}{\sigma^2_{l^* - 2\lvert J\rvert} \vee K^2} = \Theta(n^d) \gg \log(n^d).
\end{align*}
This completes the verification of all required conditions for $d \geq 5$. 

For $d= 4$, the argument proceeds analogously. Using \eqref{eq:EV_asympt} and \eqref{eq:auxiliary_estim}, we obtain:
\begin{align*}
    \frac{\sum_{\tilde k \geq l_*}^{u_n-\lvert U\rvert - 1} \sigma_{\tilde k}^2}{s_n^2} \geq \frac{\sum_{k \geq l_*, k \notin J}^{u_n-\lvert U\rvert - 1} \sigma_{k}^2}{s_n^2} &\geq \frac{\Theta\Big( \sum_{k = 2n^{d-\varepsilon}}^{u_n-\lvert U\rvert - 1} n^4/k \Big)}{\Theta (n^4 \log n)} = \frac{\Theta \big(\log \big( \frac{u_n}{n^{d-\varepsilon}}\big) \big)}{\Theta (\log n)} = \varepsilon \Theta(1);\\
    \frac{u_n-\lvert U\rvert - 1 - l^*}{\sigma^2_{l^*, \text{reorder.}} \vee K(n)^2} &\geq \frac{u_n-\lvert U\rvert - 1 - l^*}{\sigma^2_{l^* - 2\lvert J\rvert} \vee K(n)^2} = \Theta \Big( \frac{n^d}{(n^4/(\varepsilon n^d)^{4/d}) \vee n^{2\varepsilon}}\Big)\\ 
    &= \Theta(n^{d-2 \varepsilon}) \gg \log(n^4 \log n); \\
    \frac{\sum_{\tilde k \geq l_*}^{u_n-\lvert U\rvert - 1} \E[Z_{\tilde k}^2 \ind_{\{\lvert Z_{\tilde k}\rvert > K(n)\}}]}{\sum_{\tilde k \geq l_*}^{u_n-\lvert U\rvert - 1} \sigma_{\tilde k}^2} &\leq \frac{\sum_{k \geq l_*, k \notin J}^{u_n-\lvert U\rvert - 1} \E[Z_k^2 \ind_{\{\lvert Z_k\rvert > K(n)\}}] + 3\lvert J\rvert \sigma^2_{l_*-2\lvert J\rvert}}{\sum_{k \geq l_*, k \notin J}^{u_n-\lvert U\rvert - 1} \sigma_{k}^2} \\
    &\leq \frac{1}{2} \E\Big[(Z^2 - 1)^2 \ind_{\big\{ Z^2 + \lvert Z\rvert > \frac{K(n)}{2 \mu_{l_* - 2\lvert J\rvert}}\big\}}\Big] \\&\qquad\qquad+\frac{\Theta \Big( n^2 \ntwo{h(y)}^2 + \frac{\sum_{k \geq l_*}^{u_n} \mu_k^{3/2}}{\log n} + \lvert J\rvert \sigma^2_{l_* - 2\lvert J\rvert}\Big)}{\varepsilon \Theta (n^4 \log n)}\\
    &= \frac{1}{2} \E\Big[(Z^2 - 1)^2 \ind_{\big\{ Z^2 + \lvert Z\rvert > \frac{n^\varepsilon}{\Theta(n^{\varepsilon/2})}\big\}}\Big]  + o(n^{- 1}) + \Theta \Big( \frac{1}{\log^2 n}\Big).
\end{align*}
The latter is arbitrarily small.
\end{proof}

\subsubsection*{Proof of Claims A and B for $d=3$} 
\begin{proof}[Proof of Claim B]
   The statement can be verified fully analogously to Claim 3 in the low-temperature regime (see Section \ref{subsec:sph_Tlow}) using eigenvalue representation \eqref{eq:EV_representation} once we notice that for $\mu \in \{\mu_1, \mu_2, \eta_2^{-1}, \eta_3^{-1}\}$ and $v \in \{\mathrm{Var}[\ntwo{\hat{\gamma}^y}^2], \mathrm{Var}[\ntwo{{\gamma}}^2]\}$:
   $\mu/\sqrt{v} = \Theta(1)$ uniformly in $n$. The latter is due to the above observations that $\mu, \eta^{-1} = \Theta(n^2)$ and the variances are of order $\Theta(n^4)$.
\end{proof}

\begin{proof}[Proof of Claim A]
    Recall that if a sequence of random variables $(X_n)_n$ possesses moment generating functions (MGF) $(M_n)$ that are finite on a common open interval around zero, $(-a,a)$ for $a>0$, i.e., $M_n(u) < \infty$ for all $n \in \N, u \in (-a,a)$, and there exists $0<b<a$ and a finite-valued function $M$ defined on $[-b,b]$ such that for any $u \in [-b,b]$, $M(b) = \lim_n M_n(b)$, then there exists a random variable $X$, which is the distributional limit of $(X_n)_n$, its MGF is finite on $[-b,b]$ and coincides with $M$ on this interval (cf. \cite[Theorem 3]{aoms1177731541}). Using this fact it suffices to prove that the MGFs $(M_n)_n$ and $(\hat{M}_n)_n$ of $(X_n)_n$ and $(\hat{X}_n)$, respectively, converge pointwise for all $\lvert x\rvert \leq b < \min \big( \frac{1}{2 \sqrt{2}}, \frac{1}{4} \eta_2 \sqrt{\mathrm{Var}[\ntwo{{\gamma}}^2]} \big)$ (for all $n$ sufficiently large) to some finite (on $[-b,b]$) function $M$, unequal to a constant function $1$ (that is the limiting law is not that of a constant $0$). 
    
    First, note that by \eqref{eq:EV_asympt}, we have $\eta_2^2 \mathrm{Var}[\ntwo{{\gamma}}^2] = 2 \sum_{k=2}^{n^d} \eta_2^2/\eta_k^2 \sim c \sum_k k^{-4/3} \xrightarrow{} c' > 0$ as $n\rightarrow \infty$. Thus, a suitable $b > 0$ can be chosen as required. 
    
    Next, we find explicit expressions for the MGFs of $\hat{X}_n$ and $X_n$. To this end, let $t \in [-b,b]$ and recall that the MGF of $(Z+\mu)^2$ for a standard normal variable $Z$ is given by $\exp \big( \frac{\mu^2 x}{1-2x} - \frac{1}{2} \log(1-2x)\big)$ for all $x< 1/2$. Combined with the eigenvalue representation \eqref{eq:EV_representation} of $\hat{X}_n$ and the fact that $\mathrm{Var}[\ntwo{\hat{\gamma}^y}^2] = 2(1+o(n^{\varepsilon - 1})) \sum_{k=1}^{u_n-1} \mu_k^2$, this yields
    \begin{align*}
        \hat{M}_n(t) &= \exp \Bigg( -t \sum_{k=1}^{u_n-1}\frac{\mu_k}{\sqrt{\mathrm{Var}[\ntwo{\hat{\gamma}^y}^2]}} - \frac{1}{2} \sum_{k=1}^{u_n-1} \log\bigg( 1 - \frac{2t\mu_k}{\sqrt{\mathrm{Var}[\ntwo{\hat{\gamma}^y}^2]}}\bigg) \Bigg) \\
        &\qquad\qquad\times \exp \Bigg( \underbrace{t \sum_{k=1}^{u_n-1}\frac{h_k(y)^2}{\sqrt{\mathrm{Var}[\ntwo{\hat{\gamma}^y}^2]}} \bigg(\underbrace{ \bigg(1 - \frac{2t \mu_k}{\sqrt{\mathrm{Var}[\ntwo{\hat{\gamma}^y}^2]}} \bigg)^{-1} - 1}_{\lvert \cdot\rvert \in [0,3)}\bigg)}_{\lvert \cdot\rvert \in \big[0, 3b \ntwo{h(y)}^2 / \sqrt{\mathrm{Var}[\ntwo{\hat{\gamma}^y}^2]} \big) \overset{\eqref{eq:bound_shift}}{\subset} (0, o(n^{\varepsilon - 1}))} \Bigg) \\
        &= (1 + o(n^{\varepsilon - 1})) \exp \Bigg( \sum_{l=2}^\infty \underbrace{\frac{1}{2l} \bigg( \frac{\sqrt{2} t}{1+o(n^{\varepsilon - 1})}\bigg)^l \sum_{k=1}^{u_n-1} \bigg(\frac{\mu_k^2}{ \sum_{p=1}^{u_n-1} \mu_p^2}\bigg)^{l/2}}_{\eqqcolon f_{n}(l)} \Bigg)
    \end{align*}
    for any $\varepsilon \in (0,1)$. Here, we additionally used that 
    $$0 < \frac{2\lvert t\rvert\mu_k}{\sqrt{\mathrm{Var}[\ntwo{\hat{\gamma}^y}^2]}} \leq \frac{(1+o(1)) 2 b\mu_1}{\sqrt{\mathrm{Var}[\ntwo{{\gamma}}^2]}} < \frac{1+o(1)}{2} \eta_2 \mu_1 \leq \frac{1+o(1)}{2} < \frac{3}{4}$$ 
    for all $n$ sufficiently large, which follows by \eqref{eq:auxiliary_estim} and \eqref{eq:EV_asympt}. 
    
    In order to show that the limits of the MGFs $\hat{M}_n$ and $M_n$ exist and coincide, we first observe that the sum in $f_n(l)$ equals $1$ when $l=2$. Since moreover $\mu_k^2/ \sum_{p=1}^{u_n-1} \mu_p^2 < 1$ for all $k$, it remains bounded by $1$ for all $l > 2$. Therefore, uniformly in $t \in [-b,b]$, $$\lvert f_n(l)\rvert \leq \frac{1}{2l} \Big(\frac{3}{4}\Big)^l.$$ Since the latter bound is summable, by dominated convergence theorem, we obtain
    \begin{align*}
        \lim_{n\rightarrow\infty} \hat{M}_n(t) = \exp \Bigg( \sum_{l=2}^\infty \frac{1}{2l} (\sqrt{2} t)^l \lim_{n \rightarrow \infty} \sum_{k=1}^{u_n-1} \bigg(\frac{\mu_k^2}{ \sum_{p=1}^{u_n-1} \mu_p^2}\bigg)^{l/2} \Bigg).
    \end{align*}
    And analogously,
    \begin{align*}
        \lim_{n\rightarrow\infty} M_n(t) = \exp \Bigg( \sum_{l=2}^\infty \frac{1}{2l} (\sqrt{2} t)^l \lim_{n \rightarrow \infty} \sum_{k=2}^{n^d} \bigg(\frac{1/\eta_k^2}{ \sum_{p=2}^{n^d} 1/\eta_p^2}\bigg)^{l/2} \Bigg).
    \end{align*}
    It thus remains to show that for each fixed $l \geq 3$, $$\lim_{n} \sum_k \bigg(\frac{\mu_k^2}{ \sum_p \mu_p^2}\bigg)^{l/2} = \lim_{n} \sum_k \bigg(\frac{1/\eta_k^2}{ \sum_p 1/\eta_p^2}\bigg)^{l/2},$$
    and that these limits are indeed well-defined. But before turning to proving this, observe that for $l=2$, both sides of the latter expression are equal to $1$, which then suffices to conclude that the limiting distribution of $(\hat{X}_n)$ and $(X_n)_n$ is non-trivial. 
    
    We start by verifying the well-definedness of $\lim_{n} \sum_k \big(\frac{1/\eta_k^2}{ \sum_p 1/\eta_p^2}\big)^{l/2}$. First note that
    \begin{align*}
        \sum_{k=2}^{n^d} \bigg(\frac{1/\eta_k^2}{ \sum_{p=2}^{n^d} 1/\eta_p^2}\bigg)^{l/2} =\hspace{-0.25cm}\sum_{x \in [0,n) \cap \Z^d \setminus \{0\}}\hspace{-0.2cm} \bigg(\frac{1/\eta_x^2}{ \sum_{y \in [0,n) \cap \Z^d \setminus \{0\}} 1/\eta_y^2}\bigg)^{l/2} = \hspace{-0.1cm} \sum_{x \in \hat{\Lambda}_n \setminus \{0\}} \hspace{-0.15cm} \bigg(\frac{1/\eta_x^2}{ \sum_{y \in \hat{\Lambda}_n \setminus \{0\}} 1/\eta_y^2}\bigg)^{l/2},
    \end{align*}
    where $\eta_x = 2 \sum_{i=1}^3 (1-\cos(2\pi x_i/n))$ and $\hat{\Lambda}_n = [-n/2, n/2)^3 \cap \Z^3$. Let $f(n)$ be an arbitrary function slowly growing to infinity such that $f(n) = o(n)$, e.g., $f(n) = \log(n)$. Then, using explicit form of $\eta_x$ and Taylor expanding it, we easily get that
    \begin{align*}
        \sum_{y \in \hat{\Lambda}_n \setminus \{0\}} \frac{1}{n^4 \eta_y^2} = \big(1 + \mathcal{O}\big(f(n)^{-1}\big)\big) \sum_{\substack{y \in \hat{\Lambda}_n \setminus \{0\}\\ \lvert y\rvert \leq f(n)}} \frac{1}{(2\pi)^4 \lvert y\rvert^4} + o(1) \overset{n \rightarrow \infty}{\sim} \sum_{y \in \Z^3\setminus \{0\}} \frac{1}{(2\pi)^4 \lvert y\rvert^4}.
    \end{align*}
    Fully analogously,
    \begin{align*}
        \sum_{x \in \hat{\Lambda}_n \setminus \{0\}} \Big(\frac{1}{n^2 \eta_x}\Big)^l \overset{n \rightarrow \infty}{\sim} \sum_{x \in \Z^3\setminus \{0\}} \frac{1}{(2\pi)^{2l} \lvert x\rvert^{2l}},
    \end{align*}
    which then yields
    \begin{align*}
        \lim_{n\rightarrow \infty} \sum_k \bigg(\frac{1/\eta_k^2}{ \sum_p 1/\eta_p^2}\bigg)^{l/2} = \sum_{x \in \Z^3\setminus \{0\}} \frac{1}{\lvert x\rvert^{2l}} ~\Big/~ \bigg( \sum_{y \in \Z^3\setminus \{0\}} \frac{1}{\lvert y\rvert^4} \bigg)^{l/2} < \infty.
    \end{align*}
    
    To conclude the proof of the claim, it only remains to show that the ratio $$\sum_k \bigg(\frac{\mu_k^2}{ \sum_p \mu_p^2}\bigg)^{l/2}~\bigg/~\sum_k \bigg(\frac{1/\eta_k^2}{ \sum_p 1/\eta_p^2}\bigg)^{l/2} \xrightarrow{n\to \infty} 1.$$ 
    The latter can further be reduced to showing $T_n \coloneqq \sum_k \mu_k^l ~\big/\sum_k 1/\eta_k^l \nearrow 1$ as $n$ tends to infinity, which is due to the fact that $\sum_p \mu_p^2 = \frac{1}{2} \mathrm{Var}[\ntwo{\gamma}^2] + o(1) = \sum_p 1/\eta_p^2 +o(1)$ and $\sum_k \mu_k^l ~\big/~\sum_k 1/\eta_k^l \leq 1$. The former observation follows from the proof of \eqref{eq:var_error}, while the latter is a consequence of \eqref{eq:EV_asympt} and \eqref{eq:auxiliary_estim} (since the contribution of the smallest $\lvert U\rvert$ eigenvalues $\mu_k$ to the sum is irrelevant). 
    
    Our objective now is to find an alternative expression for $T_n$ in terms of traces of the zero-average Green's function and its appropriate principal submatrix. To this end, note that for $k \ll n^d$, by the proof of \eqref{eq:auxiliary_estim}, we get that $\mu_k \geq \max(\zeta_k -\upsilon_1, \zeta_{k+\lvert U\rvert})$, where $\zeta_k$ is the $k$-th largest eigenvalue of $G' \coloneqq G^{0\text{-}\mathrm{avg.}}_{\Lambda_n}\vert_{U^c\times U^c}$ and $\upsilon_1$ is the largest eigenvalue of $A \coloneqq G^{0\text{-}\mathrm{avg.}}_{\Lambda_n}\vert_{U^c\times U}G^{0\text{-}\mathrm{avg.}}_{\Lambda_n}\vert_{U\times U}^{-1} G^{0\text{-}\mathrm{avg.}}_{\Lambda_n}\vert_{U\times U^c}$. We now show that for $k \leq \log^3 n$: $\mu_k = \zeta_k(1-o(1))$ with $o(1)$ uniform in this range of $k$. First observe that by \eqref{eq:EV_asympt}, $\zeta_k \geq 1/\eta_{k+\lvert U\rvert + 1} = \Theta(n^2/k^{2/3})$. A sufficiently good upper bound for $\upsilon_1 > 0$ follows from \eqref{eq:0avgGfct_polydec}: $$\upsilon_1 \leq \mathrm{Trace}(A) \leq c(U) \sum_{x \in U^c, y \in U} G^{0\text{-}\mathrm{avg.}}_{\Lambda_n}(x,y)^2 = \mathcal{O}(n).$$ Analogously to the above, this implies that
    \begin{align*}
        \frac{\sum_{k=1}^{u_n-1} (\mu_k/n^2)^l}{\sum_{k=2}^{n^d} 1/(n^2\eta_k)^l} {\sim} \frac{\sum_{k=1}^{\log^3 n} (\mu_k/n^2)^l}{\sum_{k = 2}^{\log^3 n} 1/(n^2\eta_k)^l} {\sim} \frac{\sum_{k=1}^{\log^3 n} (\zeta_k/n^2)^l}{\sum_{k = 2}^{\log^3 n} 1/(n^2\eta_k)^l} {\sim} \frac{\sum_{k=1}^{u_n} (\zeta_k/n^2)^l}{\sum_{k=2}^{n^d} 1/(n^2\eta_k)^l} = \frac{\mathrm{Trace}((G')^l)}{\mathrm{Trace}((G^{0\text{-}\mathrm{avg.}}_{\Lambda_n})^l)}
    \end{align*}
    as $n \rightarrow \infty$, and that $\mathrm{Trace}((G')^l) = \Theta(n^{2l})$. 
    
    To show that the latter ratio converges to $1$, observe first that \eqref{eq:0avgGfct_polydec} yields that there is an absolute constant $K>0$ such that for all $x,y \in \hat{\Lambda}_n$, $|G^{0\text{-}\mathrm{avg.}}_{\Lambda_n}(x,y)| \leq K G_{\Z^d}(x,y)$. The switch to $G_{\Z^d}$ instead of $G^{0\text{-}\mathrm{avg.}}_{\Lambda_n}$ guarantees that all the terms in the expressions below are non-negative and, in particular, allows us to estimate as in the third line below. With the convention that $x_{l+1} = x_1$, we have
    \begin{align*}
        0 \leq \mathrm{Trace}((G^{0\text{-}\mathrm{avg.}}_{\Lambda_n})^l) &- \mathrm{Trace}((G')^l) = \sum_{k=1}^l \sum_{\substack{I \subset \{1, \ldots, l\}\\ \lvert I\rvert = k}} \sum_{\substack{x_i \in U: i \in I;\\ x_j \in U^c: j \notin I}} \prod_{i=1}^l G^{0\text{-}\mathrm{avg.}}_{\Lambda_n} (x_i, x_{i+1})\\
        &\leq \sum_{k=1}^l K^l \sum_{\substack{I \subset \{1, \ldots, l\}\\ \lvert I\rvert = k}} \sum_{\substack{x_i \in U: i \in I;\\ x_j \in U^c: j \notin I}} \prod_{i=1}^l G_{\Z^d} (x_i, x_{i+1})\\ 
        &\leq \sum_{k=1}^l K^l \binom{l}{k} \sum_{u \in U} \sum_{\substack{x_i \in \hat{\Lambda}_n: i \neq 1}} G_{\Z^d}(u, x_2) \bigg(\prod_{i=2}^{l-1} G_{\Z^d} (x_i, x_{i+1})\bigg) G_{\Z^d}(x_l, u)\\
        &\leq (2^l - 1) K^l \sum_{u \in U} \lambda_{\max} \Big( G_{\Z^d}\vert_{\hat{\Lambda}_n \times \hat{\Lambda}_n}^{l-2}\Big) \sum_{\substack{x\in \hat{\Lambda}_n}} G_{\Z^d}(u, x)^2\\
        &\leq c n \lambda_{\max} \big( G_{\Z^d}\vert_{\hat{\Lambda}_n \times \hat{\Lambda}_n}\big)^{l-2}
    \end{align*}
    for an appropriate $c = c(l, U)>0$. Furthermore, by Cauchy-Schwarz inequality,
    \begin{align*}
        \lambda_{\max} \big( G_{\Z^d}\vert_{\hat{\Lambda}_n \times \hat{\Lambda}_n}\big) \leq \sup_{h \in \R^{\hat{\Lambda}_n}: \norm{h} = 1} \sum_{x, y \in \hat{\Lambda}_n} h_x G_{\Z^d}(x,y) h_y \leq \sup_{x \in \hat{\Lambda}_n} \sum_{y \in \hat{\Lambda}_n} G_{\Z^d}(x,y) \leq c' n^2.
    \end{align*}
    Thus, the ratio of interest $1 \geq \mathrm{Trace}((G')^l)/ \mathrm{Trace}((G^{0\text{-}\mathrm{avg.}}_{\Lambda_n})^l) \geq 1 - \mathcal{O}(n^{-3})$ converges to $1$ as $n$ tends to infinity.
\end{proof}

\begin{remark}
\label{rem:Malliavin}
    As mentioned above, instead of relying on Proposition \ref{prop:dens_conv_1D} and the estimate \eqref{eq:0avgGfct_polydec}, which together yield uniform convergence of the probability density functions of the variables $X_n$ and $\hat{X}_n$ to the standard normal density for $d \geq 4$, one could consider an alternative route via Malliavin calculus \cite{HU2014814}. More precisely, by slightly modifying the proof of Lemma 7.1 in that reference, one can verify the assumptions of their Corollary 6.6 (cf. Theorem 7.3) for $\hat{X}_n$ and $X_n$ in dimensions $d \geq 5$. For $d = 4$, however, checking the fifth assumption would require an additional argument to replace Lemma 7.1, which we expect would resemble the proof of our Proposition \ref{prop:dens_conv_1D}.\\
    Finally, this alternative approach does not extend to $d = 3$, as the variables $\hat{X}_n$ and $X_n$ are not Gaussian in the limit.
\end{remark}

\subsection{Local correlation functions of the Spherical model}
\label{subsec:spher_correlations}

The key objective of this section is to prove Corollary \ref{cor:spher_correlations}, namely to show that
\begin{align}
\label{eq:cov_converg}
    \nu_{\Lambda_n, \beta} \bigg[\prod_{x \in U} \theta_x^{i_x}\bigg] \xrightarrow{n \rightarrow \infty} \E\bigg[ \prod_{x \in U} \alpha_x^{i_x}\bigg],
\end{align}
where $\sqrt{\beta} \alpha$, as described in Theorem \ref{thm:spher_detailed}, is either an $m^2$-massive GFF on $\Z^d$ if $\beta < \beta_c$, a GFF on $\Z^d$ if $\beta = \beta_c$ or a GFF on $\Z^d$ plus an independent drift $\sqrt{\beta - \beta_c} X \mathbf{1}_{\Z^d}$ with a Rademacher random variable $X$ if $\beta > \beta_c$.
Since $\theta_U$ already converges in law to $\alpha_U$, it suffices to verify the uniform integrability of the variables $\prod_{x \in U} \theta_x^{i_x}$ in order to conclude \eqref{eq:cov_converg} (see, e.g., \cite[Theorem 25.12]{billingsley1995probability}).
The latter follows instantaneously from the following:
\begin{lemma}[Moments of spins of the spherical model]
    \label{lemma:spher_moments}
    Let $\theta$ be a configuration of the spherical model on $\Lambda_n$ at inverse temperature $\beta > 0$. For any $p \in \N_0$, there exists a constant $C_p > 0$, independent of $n$, such that for all $n$,
    \begin{align*}
        \nu_{\Lambda_n, \beta}\big[\lvert\theta_0\rvert^{2p}\big] \leq C_p.
    \end{align*}
\end{lemma}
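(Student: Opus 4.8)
The plan is to bound the $2p$-th moment of a single spin $\theta_0$ uniformly in $n$ by comparing the spherical measure to a Gaussian one via the conditioning relation of Proposition \ref{prop:mGFF-spher}, and then to control the resulting density ratio. Concretely, fix a mass $m^2 = m^2(\beta,d) > 0$ chosen so that (in the high-temperature and critical regimes) $G_{\Z^d,m^2}(0,0) = \beta$ — or, following the low-temperature analysis, one may equally work with the zero-average GFF representation of Proposition \ref{prop:zero-avGFF-spher}; here I will use the massive GFF. Let $\bfphi$ be an $m_n^2$-massive GFF on $\Lambda_n$ with $m_n^2 \to m^2 > 0$ and $G_{\Lambda_n, m_n^2}(0,0) = \beta$ for all $n$, so that $\mathrm{Law}(\bftheta) = \mathrm{Law}(\bfphi/\sqrt{\beta} \mid \|\bfphi\|_2 = \sqrt{\beta n^d})$. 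Writing densities as in \eqref{eq:dens_spher} with $U = \{0\}$, one has
\begin{align*}
    \nu_{\Lambda_n,\beta}\big[|\theta_0|^{2p}\big] = \int_{\R} |\gamma_0|^{2p}\, f_{\bfphi_0/\sqrt{\beta}}(\gamma_0)\, J_n(\gamma_0)\, \d\gamma_0,
\end{align*}
where $f_{\bfphi_0/\sqrt{\beta}}$ is a centered Gaussian density with variance $G_{\Lambda_n,m_n^2}(0,0)/\beta = 1$ (so uniformly controlled in $n$), and $J_n(\gamma_0)$ is the ratio of the density of $\|\tilde\bfphi^{\{0\}} + h^{\sqrt{\beta}\gamma_0}\|_2^2$ at $\beta n^d$ to that of $\|\bfphi\|_2^2$ at $\beta n^d$. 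The task reduces to bounding $J_n(\gamma_0)$ by something integrable against $|\gamma_0|^{2p} e^{-\gamma_0^2/2}$, uniformly in $n$.

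First I would show that $J_n(\gamma_0) \le C\, e^{c\gamma_0^2}$ for constants $c < 1/2$ and $C$ independent of $n$ and $\gamma_0$; combined with the Gaussian weight $e^{-\gamma_0^2/2}$ this yields a finite, $n$-uniform bound on the integral. To get this, I would follow exactly the machinery already developed in Subsection \ref{subsec:sph_Thigh}: diagonalize $G_{\{0\}^c,m_n^2}|_{\{0\}^c\times\{0\}^c}$ and $G_{\Lambda_n,m_n^2}$, write $J_n$ as a ratio of densities of sums $\sum_i Z_{i,u_n}$ and $\sum_i \lambda_i(Y_{i,n^d}^2-1)$ as in that subsection (with $u_n = n^d - 1$), and apply the local-CLT-type convergence of Proposition \ref{prop:dens_conv_1D}, whose hypotheses were already verified there for these arrays. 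Tracking the $\gamma_0$-dependence: the harmonic extension contributes $\|h^{\sqrt{\beta}\gamma_0}_{\{0\}^c}\|_2^2 \le C(m^2,d)\,\beta\gamma_0^2$ (via the exponential decay \eqref{eq:Gfct_expdec}, exactly as in the estimate $0 \le |R_{U,n}| \le M$ but now keeping the $\|\gamma\|^2$ term), which shifts the argument of the numerator density by $O(\gamma_0^2)$; since Proposition \ref{prop:dens_conv_1D} gives $s_n f_{\sum Z}(s_n x) \to \frac{1}{\sqrt{2\pi}}e^{-x^2/2}$ uniformly in $x$ and $s_n^2, \sigma_n^2 = \Theta(n^d)$, the shift by $O(\gamma_0^2) = O(\sqrt{s_n}\cdot \gamma_0^2/\sqrt{n^d})$ forces at most an $O(\gamma_0^4/n^d)$ change in $x^2/2$ — in fact the shift is $o(s_n)$ for fixed $\gamma_0$ — so for each fixed $\gamma_0$, $J_n(\gamma_0)\to 1$, and more importantly $\sup_n J_n(\gamma_0) \le C e^{c\gamma_0^2}$ with $c$ as small as we like for large $n$ (and trivially bounded for the finitely many small $n$, since all densities involved are bounded). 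This $n$-uniform exponential domination lets us apply dominated convergence / a direct bound to conclude $\nu_{\Lambda_n,\beta}[|\theta_0|^{2p}] \le C_p$.

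For the regimes $\beta \ge \beta_c$ (where one cannot take $m_n^2 \to m^2 > 0$) I would run the same argument through the zero-average decomposition used in Subsection \ref{subsec:sph_Tlow}: writing $\theta_0 = (\gamma_0 + c)/\sqrt{\beta}$ with $(\bfgamma, c) \sim \mathcal N(0, G^{0\text{-avg.}}_{\Lambda_n}) \times \d c$ conditioned on $\|\bfgamma + c\mathbf 1\|_2^2 = \beta n^d$, and noting that $c^2 \le \beta$ deterministically under the conditioning, so the contribution of $c$ to $|\theta_0|^{2p}$ is bounded, while the contribution of $\gamma_0$ is handled by the density bounds of Claim 3 (low temperature) / Claim B (critical), which are already $n$-uniform. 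The main obstacle, then, is purely bookkeeping: carefully propagating the $\gamma_0$-dependence through the ratio $J_n$ (equivalently through $R_n(\gamma_0)$) to produce an $n$-uniform \emph{Gaussian-integrable} upper bound, rather than just the pointwise convergence $J_n(\gamma_0) \to 1$ that the earlier sections establish. Since the earlier sections already supply all the eigenvalue estimates (Lemma \ref{lemma:massive_EVs}, \eqref{eq:auxiliary_estim}), the decay estimate \eqref{eq:Gfct_expdec}, the improved Green's function bound \eqref{eq:0avgGfct_polydec}, and the density-convergence tool Proposition \ref{prop:dens_conv_1D}, this is a matter of assembling them with the $\gamma_0$-growth tracked explicitly; I expect no genuinely new difficulty beyond this quantitative care.
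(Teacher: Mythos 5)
Your route is workable in spirit but it is not the paper's, and as written it has two genuine gaps. First, the key bound $\sup_n J_n(\gamma_0)\le C e^{c\gamma_0^2}$ does not follow from Proposition \ref{prop:dens_conv_1D} ``whose hypotheses were already verified'' in Subsection \ref{subsec:sph_Thigh}: the triangular array $(Z_{i,u_n})_i$ there depends on the boundary value $\gamma$ through the harmonic extension $\tilde h$, and every constant in that verification (the bound $L^*$, the truncation level $K$, the Lindeberg rate) depends on $\ntwo{\gamma}$. So the proposition only gives $J_n(\gamma_0)\to 1$ pointwise in $\gamma_0$, which is useless for a uniform moment bound; what you need is a local-CLT \emph{upper} bound of order $1/s_n$ on the numerator density, uniform over the shift vector $h^{\sqrt{\beta}\gamma_0}$. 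That is a new estimate, not an assembly of existing ones (it is true -- e.g.\ the modulus of the characteristic function of $\sum_k \mu_k(Y_k+a_k)^2$ is at most $\prod_k(1+4t^2\mu_k^2)^{-1/4}$ independently of the $a_k$, which gives $\norm{f}_{L^\infty}=\mathcal O(n^{-d/2})$ -- but you would have to prove it). Second, for $\beta\ge\beta_c$ the appeal to Claim 3 / Claim B is misplaced: those bound the densities of the norms $\ntwo{\bfgamma}^2$ and $\ntwo{\hat{\bfgamma}^y}^2$, not the conditional density of the single spin, and the whole analysis of $R_n(y)$ in Section \ref{subsec:sph_Tlow} is explicitly restricted to $y$ ranging over a bounded set; producing a Gaussian-integrable bound on $R_n(y)$ uniformly over all $y$ is again additional work you have not done.

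The paper's proof bypasses all of this with an exact symmetry argument, valid for every $n$, every $\beta>0$ and any fixed $m^2>0$ (no tuning of the mass, hence no regime split). Diagonalizing $G_{\Lambda_n,m^2}$ one writes $\varphi_0=\sum_w q_{0,w}(m^2+\eta_w)^{-1/2}Z_w$ and $\ntwo{\bfphi}^2=\sum_w(m^2+\eta_w)^{-1}Z_w^2$ with $(Z_w)_w$ i.i.d.\ standard normal. The conditional law of $(Z_w)_w$ given $\ntwo{\bfphi}^2=\beta n^d$ is invariant under flipping the sign of any coordinate, so every term of $\E[\varphi_0^{2p}\mid\cdot]$ containing an odd power of some $Z_w$ vanishes; since $\lvert q_{0,w}\rvert=n^{-d/2}$ for all $w$, the surviving terms are dominated by $(2p)!\, n^{-dp}\sum_{w_1,\dots,w_p}\E[\prod_j Z_{w_j}^2/(m^2+\eta_{w_j})\mid\cdot]$, and the latter sum equals $n^{-dp}\E[\ntwo{\bfphi}^{2p}\mid\ntwo{\bfphi}^2=\beta n^d]=\beta^p$ deterministically. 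Hence $\nu_{\Lambda_n,\beta}[\theta_0^{2p}]\le(2p)!$. No limit theorems, no density ratios. Before deploying the asymptotic machinery of Sections \ref{subsec:sph_Thigh}--\ref{subsec:sph_Tlow}, it is worth checking whether the conditioning itself gives you the moments for free, as it does here.
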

Indeed, since for a sequence of random variables $(X_n)$ to be uniformly integrable, it suffices to satisfy the condition $\sup_n \E[\lvert X_n\rvert^2] < \infty$; and by generalized H\"older's inequality, we have
\begin{align*}
    \nu_{\Lambda_n, \beta} \bigg[\prod_{x \in U} \lvert\theta_x\rvert^{2i_x}\bigg] \leq \prod_{x \in U} \nu_{\Lambda_n, \beta}\Big[\lvert\theta_x\rvert^{2i_x \lvert U\rvert}\Big]^{1/\lvert U\rvert} \leq \nu_{\Lambda_n, \beta}\Big[\lvert\theta_0\rvert^{2 \lvert U\rvert \max_x(i_x)}\Big] \leq C_{|U| \max_x (i_x)}.
\end{align*}
The second to last inequality follows from the fact that spins of the spherical model on torus are identically distributed.

We now proceed to the proof of the lemma, which is motivated by \cite[Lemma 3]{Lukkarinen2020}.
\begin{proof}
    Recall that by Proposition \ref{prop:rel_GFF_spher}, $\sqrt{\beta} \theta$ has the same law as any massive ($m^2>0$) GFF $\phi$ on $\Lambda_n$ conditioned on $\ntwo{\phi}^2 = \beta n^d$. Therefore,
    \begin{align*}
        \beta^p \nu_{\Lambda_n, \beta}\big[\theta_0^{2p}\big] = \E\big[\phi_0^{2p} ~\big\vert~ \ntwo{\phi}^2 = \beta n^d\big].
    \end{align*}
    Let $Q = (q_{x}^w)_{x, w \in [0, n)^d \cap \Z^d}$ be the orthonormal matrix diagonalizing $G{\Lambda_n, m^2}$, whose columns are the eigenvectors $(q^w)_w$; in particular, we have $Q^T G_{\Lambda_n, m^2} Q = \mathrm{diag} \big( ((m^2+\eta_w)^{-1})_{w}\big)$, where $(\eta_w)_{w \in [0, n)^d \cap \Z^d}$ are the eigenvalues of $-\Delta_{\Lambda_n}$. Let $(Z_w)_w$ be i.i.d. standard normal random variables. Then, 
    \begin{align*}
        \phi = (\phi_x)_{x \in \Lambda_n} &\overset{\text{law}}{=} \bigg( \sum_{w \in [0, n)^d \cap \Z^d} q_{x}^w \frac{1}{\sqrt{m^2+\eta_w}} Z_w\bigg)_{x \in \Lambda_n}; \\
        \ntwo{\phi}^2 &\overset{\text{law}}{=} \sum_{w \in [0, n)^d \cap \Z^d} \frac{1}{\eta_w + m^2} Z_w^2.
    \end{align*}
    Using this, we obtain on the one hand that
    \begin{align*}
        \beta^{p} = \frac{1}{n^{dp}} \E\big[\ntwo{\phi}^{2p} ~\big\vert~ \ntwo{\phi}^2 = \beta n^d\big] = \frac{1}{n^{dp}} \sum_{\substack{w_1, \ldots, w_{p}\\ \in[0, n)^d \cap \Z^d }} \E\bigg[ \prod_{j=1}^{p} \frac{Z_{w_j}^2}{m^2+ \eta_{w_j}} ~\bigg\vert~ \sum_{w} \frac{Z_w^2}{\eta_w + m^2} = \beta n^d\bigg].
    \end{align*}
    And on the other hand, by Lemma \ref{lemma:massive_EVs},
    \begin{align*}
        \E\big[\phi_0^{2p} ~\big\vert~ \ntwo{\phi}^2 = \beta n^d\big] &= \sum_{\substack{w_1, \ldots, w_{2p}\\ \in[0, n)^d \cap \Z^d }} \bigg(\prod_{j=1}^{2p} q_{0}^{w_j}\bigg) \E\bigg[ \prod_{j=1}^{2p} \frac{Z_{w_j}}{\sqrt{m^2+ \eta_{w_j}}} ~\bigg\vert~ \sum_{w} \frac{Z_w^2}{\eta_w + m^2} = \beta n^d\bigg]\\
        &= \frac{1}{n^{dp}} \sum_{\substack{w_1, \ldots, w_{2p}\\ \in[0, n)^d \cap \Z^d }}\E\bigg[ \prod_{j=1}^{2p} \frac{Z_{w_j}}{\sqrt{m^2+ \eta_{w_j}}} ~\bigg\vert~ \sum_{w} \frac{Z_w^2}{\eta_w + m^2} = \beta n^d\bigg].
    \end{align*}
    Since the conditional law of $(Z_{w}/\sqrt{m^2+ \eta_{w}})w$, given $\sum_{w \in [0, n)^d \cap \Z^d} \frac{1}{\eta_w + m^2} Z_w^2 = \beta n^d$, is symmetric in each coordinate, it is invariant under sign flips. Therefore, any product involving an odd power of some $Z_w$ vanishes under the conditional expectation, and hence,
    \begin{align*}
        \E\big[\phi_0^{2p} ~\big\vert~ \ntwo{\phi}^2 = \beta n^d\big] \leq \frac{(2p)!}{n^{dp}} \sum_{\substack{w_1, \ldots, w_{p}\\ \in[0, n)^d \cap \Z^d }}\E\bigg[ \prod_{j=1}^{p} \frac{Z^2_{w_j}}{m^2+ \eta_{w_j}} ~\bigg\vert~ \sum_{w} \frac{Z_w^2}{\eta_w + m^2} = \beta n^d\bigg] = (2p)! \beta^p.
    \end{align*}
    This concludes the proof, establishing the uniform moment bound $\nu_{\Lambda_n, \beta}\big[\theta_0^{2p}\big] \leq (2p)! = C_p$.
\end{proof}


\section{The infinite spin-dimensionality distributional limit of the spin \texorpdfstring{$O(N)$}{O(N)} model}
\label{sec:O(N)}
This section studies the ``local" infinite spin-dimensionality limit ($N \rightarrow \infty$) of the spin $O(N)$ model both on the finite domain—the discrete $d$-dimensional torus $\Lambda_n$ for $d \geq 2$—and in the subsequent infinite volume limit ($n \rightarrow \infty$). More precisely, we prove the following two theorems corresponding to these respective cases that combined with Scheff\'e's lemma\footnote{Scheff\'e's lemma states: if a sequence of integrable functions $(f_n)_n$ on a measure space $(X, \F, \mu)$ converges almost everywhere to an integrable function $f$, then $(f_n)$ converges to $f$ in $L^1(\mu)$ if and only if $\norm{f_n}_{L^1(\mu)} \rightarrow \norm{f}_{L^1(\mu)}$.} yield Theorem \ref{thm:spinO(N)} in the introduction. 
\begin{theorem}[$N\rightarrow \infty$ limit of the spin $O(N)$ model on a finite torus]
\label{thm:spinO(N)_torus}
    Let $\Lambda_n = \T^d_n$ and $S = (S_x)_{x \in \Lambda_n}$ be a configuration of the spin $O(N)$ model at inverse temperature $\beta > 0$.
    For any $M \in \N$, the density function of $\ovlineM{S} \coloneqq (S^i_x)_{x \in \Lambda_n}^{i = 1, \ldots, M}$ converges as $N \rightarrow \infty$ to that of an $M$-vectorial massive GFF on $\Lambda_n$ scaled by $1/\sqrt{\beta}$ with the mass $m_n^2$ depending on $\beta, d$, and $n$ in such a way that
    \begin{align} \label{eq:choice_of_mass}
        G_{\Lambda_n, m_n^2}(x,x) = \beta \quad \text{for all } \, x\in \Lambda_n.
    \end{align}
    or more explicitly\footnote{This follows from the fact that the trace of a matrix equals the sum of its eigenvalues and Lemma \ref{lemma:massive_EVs}.}, $m^2$ is the unique solution to  
    \begin{align}
    \label{eq:mass_spinO(N)}
        \frac{1}{m_n^2 n^d} + \frac{1}{n^d} \sum_{x \in [0,n)^d \cap \Z^d \setminus\{0\}} \frac{1}{m_n^2 + 2\sum_{k=1}^d (1 - \cos(2\pi x_k/n))} = \beta.
    \end{align}
\end{theorem}

\begin{theorem}[Infinite-volume $N\rightarrow \infty$ limit of the spin $O(N)$ model]
\label{thm:spinO(N)_infVol}
    For each $n, N \in \N$, let $S = (S_x)_{x \in \Lambda_n}$ be a configuration of the spin $O(N)$ model on $\Lambda_n$ at inverse temperature $\beta > 0$.  Set $\beta_c \coloneqq G_{\Z^d}(0,0)$. Then, for any $M \in \N$ and a finite set $U \subset \Z^d$ (considered as a subset of $\Lambda_n$ for each $n$), $\ovlineM{S}_{U} \coloneqq (S^i_x)_{x \in U}^{i = 1, \ldots, M}$ converges in law as $N \rightarrow \infty$ followed by $n \rightarrow \infty$ towards: 
    \begin{enumerate}
        \item $\beta < \beta_c$: an $M$-vectorial massive GFF on $\Z^d$ restricted to $U$ scaled by $1/\sqrt{\beta}$ with the mass $m^2$ depending on $\beta$ and $d$ in such a way that
        \begin{align*}
            G_{\Z^d, m^2}(x,x) = \beta \quad \text{for all }\, x\in \Z^d;
        \end{align*}
        \item $\beta = \beta_c$: an $M$-vectorial GFF on $\Z^d$ restricted to $U$ scaled by $1/\sqrt{\beta}$;
        \item $\beta > \beta_c$: an $M$-vectorial GFF on $\Z^d$ restricted to $U$ scaled by $1/\sqrt{\beta}$ plus an independent constant random drift of the form $\sqrt{\frac{\beta - \beta_c}{\beta}} \ovlineM{Z} \mathbf{1}_U$ with $\ovlineM{Z}$ being an $M$-dimensional standard normal vector.
    \end{enumerate}
\end{theorem}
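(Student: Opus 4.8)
The plan is to chain the two limits: first $N\to\infty$ at fixed torus size, then $n\to\infty$. For the first, Theorem~\ref{thm:spinO(N)_torus} together with Scheff\'e's lemma (exactly as used to deduce Theorem~\ref{thm:spinO(N)} from Theorems~\ref{thm:spinO(N)_torus} and~\ref{thm:spinO(N)_infVol}) shows that for each fixed $n$ the law of $[\ovlineM{\bfS}_U]^N_n$ converges as $N\to\infty$ to that of an $M$-vectorial $m_n^2$-massive GFF on $\Lambda_n$ restricted to $U$ and scaled by $1/\sqrt{\beta}$, where $m_n^2$ solves $G_{\Lambda_n,m_n^2}(0,0)=\beta$, equivalently \eqref{eq:mass_spinO(N)}. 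By Remark~\ref{rem:vvGFF_iidcoord} this is a vector of $M$ i.i.d. scalar $m_n^2$-massive GFFs, a centered Gaussian field; hence convergence in law as $n\to\infty$ will follow from convergence of the covariance matrix $(G_{\Lambda_n,m_n^2}(x,y))_{x,y\in U}$ (and then applied coordinatewise). Since an iterated limit is just the composition of the two, no interchange of limits is needed.

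Next I would pin down the asymptotics of $m_n^2$. Writing $g_n(s)\coloneqq \frac{1}{n^d}\sum_{w\neq 0}(s+\eta_w)^{-1}$, Lemma~\ref{lemma:massive_EVs} and \eqref{eq:mass_spinO(N)} give $\frac{1}{n^d m_n^2}+g_n(m_n^2)=\beta$, with $g_n(0)=G_{\Lambda_n}^{0\text{-avg.}}(0,0)$. Using that $g_n$ is decreasing, that $g_n(0)=G_{\Lambda_n}^{0\text{-avg.}}(0,0)\to\beta_c$ (Proposition~\ref{prop:props:zeroAvGfct}(4)), and that for fixed $\delta>0$ one has $g_n(\delta)=G_{\Lambda_n,\delta}(0,0)-\frac{1}{n^d\delta}\to G_{\Z^d,\delta}(0,0)\xrightarrow{\delta\downarrow 0}\beta_c$, a short monotonicity argument shows: if $\beta<\beta_c$ then $m_n^2\to m^2$, the unique positive root of $G_{\Z^d,m^2}(0,0)=\beta$; if $\beta\geq\beta_c$ then $m_n^2\to 0$ and $g_n(m_n^2)\to\beta_c$, whence $\frac{1}{n^d m_n^2}=\beta-g_n(m_n^2)\to\beta-\beta_c$.

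For the covariance I would decompose the scalar $m_n^2$-massive GFF on $\Lambda_n$ in the eigenbasis of $-\Delta_{\Lambda_n}$ (Lemma~\ref{lemma:massive_EVs}) as an independent sum $c_n\mathbf 1_{\Lambda_n}+\xi_n$, where $c_n\sim\mathcal N(0,\frac{1}{n^d m_n^2})$ is the zero mode and $\xi_n$ has covariance $H_n(x,y)\coloneqq G_{\Lambda_n,m_n^2}(x,y)-\frac{1}{n^d m_n^2}=\sum_{w\neq 0}\frac{q^w_x q^w_y}{m_n^2+\eta_w}$. In the high-temperature regime $G_{\Lambda_n,m_n^2}\to G_{\Z^d,m^2}$ uniformly on $U\times U$ by the discussion following Definition~\ref{def:mGFF_Z^d}, yielding directly the $M$-vectorial $m^2$-massive GFF scaled by $1/\sqrt{\beta}$. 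When $\beta\geq\beta_c$ I would compare $H_n$ with the zero-average Green's function: using $(q^w_x)^2\leq 2^d n^{-d}$ and $\eta_k=\Theta(k^{2/d}/n^2)$ one gets $\lvert H_n(x,y)-G_{\Lambda_n}^{0\text{-avg.}}(x,y)\rvert\leq C\, m_n^2\,\frac{1}{n^d}\sum_{w\neq 0}\eta_w^{-2}\leq C'\,m_n^2\, r_n$ with $r_n=\Theta(n),\Theta(\log n),\Theta(1)$ for $d=3,4,\geq 5$, and since $G_{\Lambda_n}^{0\text{-avg.}}\to G_{\Z^d,0}$ on compacts (Proposition~\ref{prop:props:zeroAvGfct}(4)) it remains to check $m_n^2 r_n\to 0$. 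For $d\geq 5$ this is immediate; for $d=4$, writing $\frac{1}{n^d m_n^2}=-(g_n(0)-\beta_c)+(g_n(0)-g_n(m_n^2))$, the bound $\lvert g_n(0)-\beta_c\rvert\leq Cn^{2-d}$ from \eqref{eq:0avgGfct_polydec} together with $g_n(0)-g_n(m_n^2)\geq c\,m_n^2$ forces $m_n^2=O(n^{-2})$; for $d=3$ the negativity $g_n(0)-\beta_c=G_{\Lambda_n}^{0\text{-avg.}}(0,0)-G_{\Z^3}(0,0)=-\Theta(n^{-1})<0$ from \eqref{eq:negativity_Greens_diff} gives $\frac{1}{n^3 m_n^2}\geq\lvert g_n(0)-\beta_c\rvert=\Theta(n^{-1})$, hence again $m_n^2=O(n^{-2})$. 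In all cases $m_n^2 r_n\to 0$, so $H_n\to G_{\Z^d,0}$ uniformly on $U\times U$ and $\xi_n|_U$ converges in law to the GFF on $\Z^d$ restricted to $U$; meanwhile $c_n\to 0$ in law if $\beta=\beta_c$, and $c_n\Rightarrow\mathcal N(0,\beta-\beta_c)$ if $\beta>\beta_c$. Taking the independent sum, rescaling by $1/\sqrt{\beta}$, and passing to the $M$ i.i.d. coordinates gives the three claimed limits, with constant drift $\sqrt{\tfrac{\beta-\beta_c}{\beta}}\,\ovlineM Z$ in the low-temperature regime.

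The \emph{main obstacle} is precisely the case $\beta\geq\beta_c$: proving $H_n\to G_{\Z^d,0}$ on compacts when $m_n^2\downarrow 0$ requires quantitative control on the rate of decay of $m_n^2$, and this is where the refined estimates of Proposition~\ref{prop:improved} on the zero-average Green's function enter --- in particular the sign of $G_{\Lambda_n}^{0\text{-avg.}}(0,0)-G_{\Z^3}(0,0)$ for $d=3$. Everything else is routine Gaussian bookkeeping.
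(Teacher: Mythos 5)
Your proposal is correct, and its skeleton (first $N\to\infty$ via Theorem~\ref{thm:spinO(N)_torus} and Scheff\'e, then reduction to convergence of the covariance matrix of a single coordinate process, with the high-temperature case handled by the convergence $G_{\Lambda_n,m_n^2}\to G_{\Z^d,m^2}$ already established after Definition~\ref{def:mGFF_Z^d}) coincides with the paper's. Where you diverge is in the treatment of $\beta\geq\beta_c$. You split off the zero mode explicitly and control $H_n-G_{\Lambda_n}^{0\text{-avg.}}$ \emph{additively} by $m_n^2\cdot\frac{1}{n^d}\sum_{w\neq 0}\eta_w^{-2}$, which forces you to prove a quantitative decay rate $m_n^2=O(n^{-2})$ at criticality in $d=3,4$; for this you import the refined estimates of Proposition~\ref{prop:improved}, and in $d=3$ you genuinely need the sign in \eqref{eq:negativity_Greens_diff} (without it, your inequality $\tfrac{1}{n^3m_n^2}\geq -(g_n(0)-\beta_c)+c\,m_n^2$ only yields $m_n^2=O(n^{-1})$, which is not enough to kill the $\Theta(n)$ factor). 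The paper avoids this entirely: for $\beta>\beta_c$ it uses that $m_n^2=\Theta(n^{-d})$ while $\eta_k=\Omega(n^{-2})$ to write $\tfrac{1}{m_n^2+\eta_k}=(1+o(1))\tfrac{1}{\eta_k}$ uniformly, and at $\beta=\beta_c$ it exploits monotonicity of the Green's function in the mass to sandwich $G_{\Lambda_n,m_n^2}$ between $G_{\Lambda_n,\varepsilon}$ and $G_{\Lambda_n,\min(m_n^2,n^{-2-\delta})}$, which requires only the soft fact $m_n^2\to 0$ and the convergence $G_{\Lambda_n}^{0\text{-avg.}}(0,0)\to\beta_c$, not Proposition~\ref{prop:improved}. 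So your route is heavier in its inputs (and, contrary to your closing remark, the refined Green's function estimates are an avoidable cost here rather than an intrinsic obstacle), but it has the merit of producing the explicit asymptotics $\tfrac{1}{n^dm_n^2}\to\beta-\beta_c$ and a clean additive error bound; both arguments are sound.
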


\subsection{The infinite spin-dimensionality limit on the torus}
\label{subsec:spinO(N)_torus}

This subsection is concerned with the infinite spin-dimensionality limit of the spin $O(N)$ model on the discrete torus, and in particular proves Theorem \ref{thm:spinO(N)_torus}. Since $n \in \N$ is fixed throughout this section, we write $\Lambda \coloneqq \T^d_n$ for brevity.

Before proceeding to the proof, let us quickly discuss the strategy. First, by Proposition \ref{prop:mGFF-O(N)}, we know that for any $m^2$-massive $N$-vectorial GFF $\phi$ with $m^2>0$,
\begin{align*}
    \mathrm{Law} \big(\ovlineM{S}\big) = \mathrm{Law} \bigg(\frac{\ovlineM{\phi}}{\sqrt{\beta}} ~\bigg\vert~ |\phi_x| = \sqrt{\beta N}, \,\forall x \in \Lambda \bigg),
\end{align*}
where $\ovlineM{\phi}$ denotes the first $M$ components and $\rest{\phi}$ the remaining $N-M$ components of $\phi$. Thus, for any $\xi \coloneqq (\xi_x)_{x\in \Lambda} \in (\R^M)^{|\Lambda|}$, we have the following equality of densities with respect to Lebesgue measure on $\R^{M |\Lambda|}$:
\begin{align}
\label{eq:dens_O(N)}
    f_{\ovlineM{S}}(\xi) = f_{\frac{\ovlineM{\phi}}{\sqrt{\beta}} ~\big\vert (|\phi_x|)_x = (\sqrt{\beta N})_x}(\xi) 
    = f_{\frac{\ovlineM{\phi}}{\sqrt{\beta}}} \left(\xi \right) \frac{f_{(|\rest{\phi_x}|^2)_x} (({\beta N} - \beta |\xi_x|^2)_x)}{f_{(|\phi_x|^2)_x} (({\beta N})_x)}.
\end{align}
Since by the central limit theorem, 
\begin{equation*}
    \bigg(\frac{|{\phi_x}|^2- \E[|{\phi_x}|^2]}{\sqrt{N}}\bigg)_{x \in \Lambda} = \bigg(\frac{|{\phi_x}|^2- N G_{\Lambda, m^2}(0,0)}{\sqrt{N}}\bigg)_{x \in \Lambda}
\end{equation*} 
(and similarly for $\rest{\phi}$) converges to a centered $n^d$-dimensional Gaussian vector, we expect that, by choosing $m^2 = m^2_n$ as in \eqref{eq:choice_of_mass}, the above ratio converges to one, which would complete the proof. 
Motivated by this discussion, we state and prove (see Appendix \ref{A:proof_densconv}) the following result, which can be seen as a multidimensional generalization of \cite[Chp.8 $\S$46 Theorem 1]{gnedenko1968limit}.
\begin{proposition}
\label{prop:dens_conv}
    Let $(X^i)_i$ be a sequence of i.i.d. centered random vectors with the probability density function (with respect to Lebesgue measure on $\R^k$) $f: \R^k \rightarrow \R$. Assume that 
    \begin{enumerate}
        \item $f \in L^r(\R^k)$ for some $r \in (1,2]$;
        \item All the entries of the covariance matrix $C$ of $X^1 = (X^1_j)_{j=1}^k$ are well-defined and finite, or equivalently, for all $1\leq j\leq k$, $X_j^1 \in L^2(\P)$;
        \item $C$ is positive definite.
    \end{enumerate}
    Then the relation 
    \begin{align*}
        n^{k/2} f^{(n)} (\sqrt{n} v) \xrightarrow[]{n\rightarrow \infty} \frac{1}{(2\pi)^{k/2}\sqrt{\det C}} \: e^{-\frac{1}{2} (v, C^{-1}v)}
    \end{align*}
    holds uniformly over $v \in \R^k$. Here, $f^{(n)}$ is the $n$-fold convolution of $f$, as well as the density function of $\sum_{i=1}^n {X}^i$.
\end{proposition}
\begin{proof}[Proof of Theorem \ref{thm:spinO(N)_torus}]
    Choose $m^2> 0$ as in \eqref{eq:mass_spinO(N)}. Then, $\E[(\phi^i_x)^2] = G_{\Lambda, m_n^2}(x,x) = \beta$. 
    We start by checking that the i.i.d. vectors $\big((\phi^i)^2 - \beta \coloneqq ((\phi^i_x)^2 - \beta)_{x\in \Lambda} \big)_{i}$ satisfy assumptions of Proposition \ref{prop:dens_conv}. Property $2.$ is obvious since we are working with the squares of Gaussians. As for $3.$, $C$ is positive semi-definite as a covariance matrix. Furthermore, $v^T Cv = 0$ if and only if $\langle v, (\phi^i)^2-\beta\rangle = \sum_{x\in \Lambda} v_x ((\phi_x^i)^2-\beta) = 0$ almost surely, which in turn is true only for $v = 0 \in \R^{\Lambda}$. To verify $1.$, we derive a formula for the density function $f$ of $((\phi^1_x)^2 - \beta)_{x\in \Lambda}$ and show that it belongs to $L^r(\R^{n^d})$ for any $r \in [1,2)$. Let $f_{\phi}$ be the density function of an $m_n^2$-massive GFF on $\Lambda$, then for any $t = (t_x)_{x \in \Lambda} \in \R^\Lambda$
    \begin{align*}
        f(t) = \bigg( \prod_{x \in \Lambda} \frac{1}{2\sqrt{t_x + \beta}} \bigg) \sum_{(k_x)_x \in \{0,1\}^{n^d}} f_{\phi} \Big( \big((-1)^{k_x} \sqrt{t_x + \beta}\big)_x\Big) \ind_{\{\forall x: \: t_x + \beta > 0\}}.
    \end{align*}
    Recall that the eigenvalues of $(-\Delta_\Lambda + m_n^2)$ belong to $[m_n^2, m_n^2 + 4d]$ (see Lemma \ref{lemma:massive_EVs}). Therefore, 
    \begin{align*}
        f(t) \leq \frac{2^{n^d}}{(2\pi)^{n^d/2} \sqrt{\det G_{\Lambda, m_n^2}}} \prod_{x\in \Lambda} \left( \frac{1}{2\sqrt{t_x + \beta}} e^{-\frac{1}{2} m_n^2 (t_x + \beta)} \ind_{\{t_x + \beta > 0\}}\right).
    \end{align*}
    From this explicit formula we directly see that $f \in L^r(\R^{n^d})$ for any $r \in [1, 2)$. 
    
    Having verified the assumptions of Proposition \ref{prop:dens_conv}, we now apply it to the ratio in \eqref{eq:dens_O(N)}.
    \begin{align*}
        &\frac{f_{(|\rest{\phi_x}|^2)_x} \left(({\beta N} - \beta |\xi_x|^2)_x \right)}{f_{(|\phi_x|^2)_x} (({\beta N})_x)} = \Big({\frac{N}{N-M}}\Big)^{{n^d}/{2}} \times\\
        &\hspace{1.45cm}\times \frac{(N-M)^{{n^d}/{2}} f_{(|\rest{\phi_x}|^2 - (N-M)\beta)_x} \Big(\sqrt{N-M}\big(\frac{\beta M - \beta |\xi_x|^2}{\sqrt{N-M}}\big)_x \Big)}{N^{n^d/2} f_{(|\phi_x|^2 - N\beta)_x} (\sqrt{N}({0})_x)}
        \xrightarrow{N \rightarrow \infty} 1.
    \end{align*}
\end{proof}

\subsection{The infinite-volume limit: \texorpdfstring{$N\rightarrow \infty$}{N goes to infty} followed by \texorpdfstring{$n\rightarrow \infty$}{n goes to infty}}
\label{subsubsec:infdim,infvol_limit}
In this subsection we discuss  the subsequent infinite-volume limit of the model and prove Theorem \ref{thm:spinO(N)_infVol}. 

Taking the limit $N \to \infty$ first significantly simplifies the problem. Indeed, by Theorem \ref{thm:spinO(N)_torus}, we know that for any fixed sufficiently large $n \in \N$ and finite $U \subset \Z^d$ (viewed as a subset of $\Lambda_n$), the marginal $\ovlineM{S}_U \coloneqq (S^{i}_x)_{x \in U}^{i = 1, \dots, M}$ converges in law as $N\rightarrow \infty$ to $\frac{1}{\sqrt{\beta}} \ovlineM{\phi}_U \coloneqq \frac{1}{\sqrt{\beta}}(\phi^i_x)_{x\in U}^{i= 1,\ldots, M}$, where $\ovlineM{\phi}_U$ is an $m^2_n$-massive $M$-vectorial GFF on $\Lambda_n$ restricted to $U$, with $m_n^2$ determined by \eqref{eq:mass_spinO(N)}. 
Since the fields $(\phi^i)_i$ are i.i.d. centered Gaussian vectors, it suffices to verify that the covariance matrix of a single coordinate process converges to that of the limiting field. 

\begin{proof}[Proof of Theorem \ref{thm:spinO(N)_infVol} (Convergence of the covariance structure)]
    Recall that the critical inverse temperature $\beta_c \in (0, \infty]$ corresponds to $G_{\Z^d}(0,0)$. In particular, $\beta_c = \infty$ if $d=2$, $\beta_c \in (0,\infty)$ for $d\geq 3$. Let us have a closer look at \eqref{eq:mass_spinO(N)}: using Riemann sum approximation, for $m^2> 0$ if $d=2$, or $m^2 \geq 0$ if $d\geq 3$, we get
    \begin{equation}
    \label{eq:asympt_Gdiag}   
        \begin{aligned}
           \frac{1}{n^d} \sum_{x \in [0,n)^d \cap \Z^d \setminus\{0\}} &\frac{1}{m^2 + 2\sum_{k=1}^d (1 - \cos(2\pi x_k/n))} \\
           \overset{n\rightarrow \infty}{\sim} &\int_{(0,1)^d} \frac{1}{m^2 + 2\sum_{k=1}^d (1 - \cos(2\pi x_k))} \d x = G_{\Z^d, m^2}(0,0).
        \end{aligned}
    \end{equation}
    
    If $\beta < \beta_c$, this implies that $(m^2_n)_n$—solutions to \eqref{eq:mass_spinO(N)}—converge to $m^2> 0$, the unique solution to $G_{\Z^d, m^2}(0,0) = \beta$. Combined with the fact that $G_{\Lambda_n, m^2}(x,y) \to G_{\Z^d, m^2}(x,y)$ for all $x,y \in U$, this completes the proof in this case.

    Suppose that $\beta > \beta_c$ ($d\geq 3$). By \eqref{eq:asympt_Gdiag}, we see that in the limit the sum in \eqref{eq:mass_spinO(N)} can be at most $\beta_c$. Hence, $m_n^2$ must be of order $\Theta(1/n^d)$. Since all the non-zero eigenvalues of $-\Delta_{\Lambda_n}$ are at least of order $\Omega(1/n^2)$,
    \begin{align*}
        \frac{1}{n^d} \sum_{x \in [0,n)^d \cap \Z^d \setminus\{0\}} &\frac{1}{m_n^2 + \eta_x} = \frac{1-\mathcal{O}(n^{2-d})}{n^d} \sum_{x \in [0,n)^d \cap \Z^d \setminus\{0\}} \frac{1}{\eta_x} \overset{n \rightarrow \infty}{\sim} \beta_c.
    \end{align*}
    Therefore, $m_n^2 = \frac{1 + o(1)}{(\beta - \beta_c) n^d}$, where the $o(1)$ term is uniform in large $n$. Let $(q^w)_{w \in [0,n)^d \cap \Z^d}$ be the orthonormal eigenvectors of $-\Delta_{\Lambda_n}$ corresponding to the eigenvalues $(\eta_w)_{w \in [0,n)^d \cap \Z^d}$—in particular, $q^0_x = \frac{1}{\sqrt{n^d}}$ for any $x \in [0,n)^d \cap \Z^d$. For any $x, y \in U$, we have
    \begin{equation}
    \label{eq:Gfct_zeroavGfct}
        \begin{aligned}
            G_{\Lambda_n, m_n^2}(x,y) &= \sum_{w \in [0,n)^d \cap \Z^d \setminus\{0\}} \frac{1}{m_n^2+\eta_w} q^w_x q^w_y + q^0_x q^0_y \frac{1}{m^2_n} \\
            &= (1-o(1)) \bigg(\sum_{w \in [0,n)^d \cap \Z^d \setminus\{0\}} \frac{1}{\eta_w} q^w_x q^w_y + (\beta - \beta_c)\bigg)\\
            &= (1-o(1)) \Big( G_{\Lambda_n}^{0\text{-avg.}}(x,y) + (\beta - \beta_c) \Big) \\
            &\xrightarrow[\text{unif. in $x,y\in U$}]{n \rightarrow \infty} G_{\Z^d}(x,y) + (\beta - \beta_c),
        \end{aligned}
    \end{equation}
    where $G_{\Lambda_n}^{0\text{-avg.}}$ is the zero-average Green's function on $\Lambda_n$. We conclude by observing that $(G_{\Z^d}(x,y) + (\beta - \beta_c))_{x,y \in U}$ is the covariance matrix of the sum of the GFF on $\Z^d$ restricted to $U$ and a Gaussian field of the form $(\sqrt{\beta - \beta_c} Z)_{x \in U}$, where $Z \sim \mathcal{N}(0,1)$ is independent of the GFF. 

    Let $\beta = \beta_c$. By \eqref{eq:asympt_Gdiag}, we see that for any $\varepsilon > 0$, there exists $n_0$ such that for all $n \geq n_0$, $0 < m_n^2 < \varepsilon$. Combined with \eqref{eq:Gfct_RW}, this implies that for any $\delta \in (0,1)$,
    \begin{align*}
        G_{\Z^d, \varepsilon}(x,y) \xleftarrow[\text{unif. over $U$}]{n \rightarrow \infty} G_{\Lambda_n, \varepsilon}(x,y) \leq G_{\Lambda_n, m_n^2}(x,y) \leq G_{\Lambda_n, \min(m_n^2, n^{-2 - \delta})}(x,y).
    \end{align*}
        As $\varepsilon \rightarrow 0$, $G_{\Z^d, \varepsilon}(x,y)$ converges to $G_{\Z^d}(x,y)$ uniformly in $x,y \in U$. Moreover, analogously to \eqref{eq:Gfct_zeroavGfct}, by \eqref{eq:mass_spinO(N)}
    \begin{align*}
        G_{\Lambda_n, \min(m_n^2, n^{-2 - \delta})}(x,y) &= (1 - o(1)) G_{\Lambda_n}^{0\text{-avg.}}(x,y) + \frac{1}{\min(m_n^2, n^{-2 - \delta}) n^d} \\
        &= (1-o(1)) G_{\Lambda_n}^{0\text{-avg.}}(x,y) + \mathcal{O}(n^{2+ \delta - d}) \\
        &\hspace{1.6cm}+ \Big(\beta_c - \frac{1-\mathcal{O}(m_n^2 n^2)}{n^d} G_{\Lambda_n}^{0\text{-avg.}}(0,0)\Big) \ind_{\{m_n^2 \leq n^{-2 - \delta}\}} \\
        &\xrightarrow[\text{unif. in $x,y\in U$}]{n \rightarrow \infty} G_{\Z^d}(x,y)
    \end{align*}
    Hence, $G_{\Lambda_n, m_n^2}$ converges uniformly over $U\times U$ to $G_{\Z^d}$ as desired. 
\end{proof}


\appendix
\section{Appendix}
\label{sec:Appendix}

\subsection{Polynomial decay of zero-average Green's function}
\label{A:zero_avg_polydecay}

This subsection proves \eqref{eq:0avgGfct_polydec}, which we restate as the following proposition.
\begin{proposition}
    Let $d \geq 3$, $n \in \N$, and $\Lambda \coloneqq \T^d_n$ be the discrete $d$-dimensional torus of side length $n$. Then, there exists $C = C(d)>0$ such that for all $n$ sufficiently large and $y \in \hat \Lambda \coloneqq [-n/2, n/2)^d \cap \Z^d$, 
    \begin{align*}
        \lvert G_{\Lambda}^{0\text{-}\mathrm{avg.}}(0,y) - G_{\Z^d}(0,y) \rvert \leq C n^{2-d}.
    \end{align*}
\end{proposition}

\begin{proof}
Recall that (see Proposition \ref{prop:props:zeroAvGfct}), 
\begin{align*}
    2d G_{\Lambda}^{0\text{-}\mathrm{avg.}}(0,y) = \int_0^\infty \Big( \P^0_{\Lambda}[\overline{X}_t = y] - \frac{1}{n^d} \Big) \d t,
\end{align*}
where $\overline{X} = (\overline{X}_t)_{t \geq 0}$ is the continuous-time simple random walk on $\Lambda$ viewed as a graph. To obtain the desired estimate we follow the strategy of the proof of \cite[Proposition 1.5]{Aba17} corresponding to our \eqref{eq:zero-avGfct_polydec} and split the above integral approximately at the time $\overline{{X}}$ reaches equilibrium (the uniform distribution). More precisely, as in the reference (with $N = n$ in our setting), for $\lambda_* \coloneqq \frac{1}{d}(1-\cos(2\pi/n)) \overset{n \rightarrow \infty}{\sim} \frac{2 \pi^2}{d n^2}$ (the spectral gap of $\overline{{X}}$) and $t_* \coloneqq \frac{\log(n^d)}{\lambda_*}$ by \cite[Theorem 20.6]{levin2017markov},
\begin{align*}
    \int_{t_*}^\infty \Big\lvert \P^0_{\Lambda}[\overline{X}_t = y] - \frac{1}{n^d} \Big\rvert \d t \leq \int_{t_*}^\infty e^{-\lambda_* t} \d t = \frac{1}{\lambda_* n^d} = \Theta(n^{2-d}). 
\end{align*}

The remaining part of the integral will be treated similarly to the referenced proof but with higher precision tailored to our setting. Note that 
\begin{align*}
    \int_0^{t_*} &\Big( \P^0_{\Lambda}[\overline{X}_t = y] - \frac{1}{n^d} \Big) \d t 
    = \int_0^{t_*} \sum_{k=0}^\infty \P^0_{\Lambda} [N_t = k] \P^0_0 [X_k = y] \d t - \frac{t_*}{n^d} \\
    &\in \sum_{k=0}^{t_* + \sqrt{4 t_* \log(n^d)}} \P^0_0 [X_k = y] \int_0^{t_*} \frac{t^k e^{-t}}{k!} \d t - \frac{t_*}{n^d} + \Bigg[0, \int_0^{t_*} \P^0_{\Lambda} [N_t > t_* + \sqrt{4 t_* \log(n^d)}] \d t\Bigg].
\end{align*}
Here $N_t$ is the number of jumps of $(\overline{X}_s)_s$ up to time $t$.
We now show that $\int_0^{t_*} \P^0_{\Lambda} [N_t > t_* + \sqrt{4 t_* \log(n^d)}] \d t = o(n^{2-d})$ as $n \rightarrow \infty$. For $t \in (0, t_*)$ by exponential Markov's inequality,
\begin{equation}
   \label{eq:Pois_tails} 
   \begin{aligned}
        \P^0_{\Lambda} [N_t > t_* + \sqrt{4 t_* \log(n^d)}] &\leq \inf_{\lambda > 0} \exp \Big( t(e^\lambda - 1) - \lambda \big(t_* + \sqrt{4 t_* \log(n^d)}\big) \Big)\\
        &\leq \exp \Big( t_* \inf_{\lambda > 0} \big[(e^\lambda - 1) - \lambda \big(1 + \sqrt{4 \lambda_*}\big) \big] \Big) \\
        &= \exp \Big( t_* \big[\sqrt{4 \lambda_*} - \underbrace{\log \big(1 + \sqrt{4 \lambda_*}\big)}_{\geq \sqrt{4 \lambda_*} - 4\lambda_*/2} \big(1 + \sqrt{4 \lambda_*}\big) \big] \Big)\\
        &\leq \exp \big( - 2 t_* \lambda_*(1 - o(1))\big) \leq (n^d)^{-3/2}.
    \end{aligned}
\end{equation}
And thus, $\int_0^{t_*} \P^0_{\Lambda} [N_t > t_* + \sqrt{4 t_* \log(n^d)}] \d t \leq t_* n^{-3d/2} = \Theta(n^{2-3d/2} \log(n)) = o(n^{2-d})$ as desired. Furthermore, for $0 \leq k \leq t_* + \sqrt{4 t_* \log(n^d)} \eqqcolon t_*'$,
\begin{align*}
    \P^0_0 [X_k = y] = \sum_{\substack{v \in \Z^d:\\ \d_{\Z^d}(0, y+nv) \leq t_*'}} \P^{0, \Z^d}_0 [X_k = y + nv].
\end{align*}
We split the latter sum into two parts: $\sqrt{t_*'} C_d \sqrt{\log(n^d)} \leq \d_{\Z^d}(0, y+nv) \leq t_*'$ and $\d_{\Z^d}(0, y+nv) \leq \sqrt{t_*'} C_d \sqrt{\log(n^d)}$ for some $C_d > 0$ to be determined later. We show that the contribution of the former fragment of the sum to $\sum_{k=0}^{t_*'} \P^0_0 [X_k = y] \int_0^{t_*} \frac{t^k e^{-t}}{k!} \d t$ can be made as small as $o(n^{2-d})$ if $C_d$ is chosen appropriately (the lower bound is clearly zero). For some appropriate absolute (depending only on the dimension) constants $c, c', r_0 > 0, R_0 < \infty$,
\begin{align*}
    \underbrace{\sum_{k=0}^{t_*'} \int_0^{t_*} \frac{t^k e^{-t}}{k!} \d t}_{\leq t_*} &\underbrace{\sum_{\substack{v \in \Z^d:\sqrt{t_*'} C_d \sqrt{\log(n^d)} \leq \\ \d_{\Z^d}(0, y+nv) \leq t_*'}}}_{\leq c (t_*'/n)^d} \P^{0, \Z^d}_0 [X_k = y + nv] \\
    &\leq \P^{0, \Z^d}_0 \Big[\max_{k=0}^{\lfloor t_*'\rfloor} \d_{\Z^d}(0, X_k) \geq \sqrt{t_*'} C_d \sqrt{\log(n^d)}\Big] \underbrace{t_* c(t_*'/n)^d}_{= \Theta(n^{2+d} \log^{d+1}(n))}\\
    &\leq c'n^{2+d} \log^{d+1}(n) \P^{0, \Z^d}_0 \Big[\max_{k=0}^{\lfloor t_*'\rfloor} \lvert X_k\rvert \geq (C_d/d) \sqrt{t_*' \log(n^d)}\Big]\\
    &\leq c'n^{2+d} \log^{d+1}(n) R_0 e^{-r_0(C_d/d)^2 \log(n^d)} = \Theta(n^{2+d - r_0C_d^2/d} \log^{d+1}(n)).
\end{align*}
The last inequality (and in particular, existence of $r_0, R_0$ as above) follows from \cite[Proposition 2.1.2 (b)]{lawler_limic_2010}. By choosing $C_d$ such that $C_d > d\sqrt{2/r_0}$, say, $C_d = (2d/\sqrt{r_0}) \vee 1$, we get that the latter expression is $o(n^{2-d})$. 

It remains to bound 
\begin{align*}
    I_n \coloneqq \sum_{k=0}^{t_*'} \sum_{\substack{v \in \Z^d: \d_{\Z^d}(0, y+nv) \\\qquad \leq C_d \sqrt{t_*'\log(n^d)}}} \P^{0, \Z^d}_0 [X_k = y + nv] \int_0^{t_*} \frac{t^k e^{-t}}{k!} \d t - \frac{t_*}{n^d}.
\end{align*}
First observe that $\int_0^{t_*} \frac{t^k e^{-t}}{k!} \d t = \P[\mathrm{Pois}(t_*) \geq k+1]$ (by definition of the upper incomplete gamma function and explicit expression of Poisson cumulative distribution function in terms of it). Therefore,
\begin{align*}
    I_n \leq \sum_{k=0}^{t_*'} \sum_{\substack{v \in \Z^d: \d_{\Z^d}(0, y+nv) \\\qquad\leq C_d \sqrt{t_*'\log(n^d)}}} \P^{0, \Z^d}_0 [X_k = y + nv] - \frac{t_*}{n^d},
\end{align*}
but also 
\begin{align*}
    I_n &\geq \P[\mathrm{Pois}(t_*) \geq t_*-\sqrt{4 t_* \log(n^d)}] \\ &\quad\times \sum_{k=0}^{t_*-\sqrt{4 t_* \log(n^d)}} \sum_{\substack{v \in \Z^d: \d_{\Z^d}(0, y+nv) \\\qquad\leq C_d \sqrt{t_*'\log(n^d)}}} \P^{0, \Z^d}_0 [X_k = y + nv] - \frac{t_*}{n^d}\\
    &\geq (1- n^{-3d/2}) \sum_{k=0}^{t_*-\sqrt{4 t_* \log(n^d)}} \sum_{\substack{v \in \Z^d: \d_{\Z^d}(0, y+nv) \\\qquad\leq C_d \sqrt{t_*'\log(n^d)}}} \P^{0, \Z^d}_0 [X_k = y + nv] - \frac{t_*}{n^d}.
\end{align*}
In the latter inequality we have used that fully analogously to \eqref{eq:Pois_tails}, $\P[\mathrm{Pois}(t_*) < t_*-\sqrt{4 t_* \log(n^d)}] \leq n^{-3d/2}$. 
Define $p_k: \R^d \rightarrow \R$ by $p_k(x) \coloneqq 2 \big(\frac{d}{2\pi k}\big)^{d/2} e^{-\frac{d\lvert x\rvert^2}{2k}}$ and write $x \leftrightarrow k$ if $k$ and $x$ have the same parity, i.e., if $k + x_1 + \ldots + x_d$ is even. Set further $E_k(x)\coloneqq \big( \P^{0, \Z^d}_0[X_k = x] - p_k(x)\big)\ind_{\{k \leftrightarrow x\}}$. By \cite[Lemma 1.5.2]{lawler2012intersections}, $\lvert x\rvert^\alpha \sum_{k \geq 0} \lvert E_k(x)\rvert \xrightarrow{\lvert x\rvert \rightarrow \infty} 0$ for every $\alpha < d$. Note that for $y \in \hat{\Lambda}$ and $v \in \Z^d \setminus\{0\}$, $\lvert y+nv\rvert = \Omega(n) \geq n/2$. Altogether, this implies that for $t \in \{t_*', t_* - \sqrt{4 t_* \log(n^d)}\}$ and $\alpha \in (d-2, d)$, 
\begin{align*}
    \sum_{k=0}^{t} &\sum_{\substack{v \in \Z^d: \d_{\Z^d}(0, y+nv) \\\qquad\leq C_d \sqrt{t_*'\log(n^d)}}} \P^{0, \Z^d}_0 [X_k = y + nv] \\
    &= \underbrace{\sum_{k=1}^{t} \sum_{\substack{v \in \Z^d\setminus \{0\}: \d_{\Z^d}(0, y+nv) \\\qquad\leq C_d \sqrt{t_*'\log(n^d)}}} p_k(y + nv) \ind_{\{k \leftrightarrow y+nv\}}}_{\eqqcolon J_n(t)} \\
    &\qquad \qquad+ \sum_{k=0}^t \P^{0, \Z^d}_0 [X_k = y] \pm \underbrace{\Theta((\log n)^d) o(n^{-\alpha})}_{= o(n^{2-d})}.
\end{align*}

\begin{itemize}
    \item \textbf{Claim 1:} $\quad0 \leq 2d G_{\Z^d}(0,y) - \sum_{k=0}^t \P^{0, \Z^d}_0 [X_k = y] \leq o(n^{2-d})$.
    \item \textbf{Claim 2:} $\quad\lvert J_n(t) - \frac{t_*}{n^d}\rvert = \mathcal{O}(n^{2-d})$.
\end{itemize}

Note that combined with the above arguments these claims would imply the desired result. Indeed, by the above and since $G_{\Z^d}(0,y)$ is finite for all $y \in \Z^d$,
\begin{align*}
    -\mathcal{O}(n^{2-d}) + \underbrace{(1 - n^{-3d/2})\Big(J_n + \sum_{k=0}^t \P^{0, \Z^d}_0 [X_k = y]\Big) - \frac{t_*}{n^d}}_{\geq 2d G_{\Z^d}(0,y) - \mathcal{O}(n^{2-d})} \leq 2d G^{0\text{-}\mathrm{avg.}}(0,y) \\
    \leq \mathcal{O}(n^{2-d}) + \underbrace{J_n - \frac{t_*}{n^d} + \sum_{k=0}^t \P^{0, \Z^d}_0 [X_k = y]}_{\leq 2d G_{\Z^d}(0,y) + \mathcal{O}(n^{2-d})}.
\end{align*}
Hence, $\lvert G^{0\text{-}\mathrm{avg.}}(0,y) - G_{\Z^d}(0,y)\rvert = \mathcal{O}(n^{2-d})$.

Let us first prove Claim 1. Recall that $2d G_{\Z^d}(0,y) = \sum_{k\geq 0} \P^{0, \Z^d}_0 [X_k = y]$. Therefore, it only remains to check that the tail ($k\geq t$) of this sum decays at rate $o(n^{2-d})$. This follows almost immediately from the following three observations: $p_k(y) \leq C(d) r^{-\frac{d}{2}}$, $\lvert E_k(y)\rvert \leq \mathcal{O}(k^{-\frac{d}{2}-1})$ by \cite[Theorem 1.2.1 (1.10)]{lawler2012intersections} and $t = \Omega(n^2 \log n)$, which together imply that the tail is bounded by $\int_{C n^2 \log n}^\infty (r^{-\frac{d}{2}-1} + r^{-\frac{d}{2}}) \leq \mathcal{O}(n^{2-d}/(\log n)^{\frac{d}{2} - 1}) = o(n^{2-d})$ as $d\geq 3$. 

Next, we address Claim 2. We first show that the contribution of $k < \varepsilon n^2$ terms (for any $0 < \varepsilon < \frac{1}{4}$) is $\mathcal{O}(n^{2-d})$. Indeed, since $\R_+ \ni r \mapsto p_r(z)$ is increasing on $(0, \lvert z\rvert^2)$ for any $z\in \R^d$ and $\lvert y + n v\rvert^2 \geq \frac{n^2}{4}$ for $v \in \Z^d \setminus\{0\}$,
\begin{align*}
    \sum_{k=1}^{\varepsilon n^2} \sum_{\substack{v \in \Z^d\setminus \{0\}: \d_{\Z^d}(0, y+nv) \\\qquad\leq C_d \sqrt{t_*'\log(n^d)}}} \hspace{-0.6cm}& p_k(y + nv) \leq \hspace{-0.6cm} \sum_{\substack{v \in \Z^d\setminus \{0\}: \d_{\Z^d}(0, y+nv) \\\qquad\leq C_d \sqrt{t_*'\log(n^d)}}} \int_{1}^{2\varepsilon n^2} 2 \Big(\frac{d}{2\pi r}\Big)^{d/2} e^{-\frac{d\lvert y + n v\rvert^2}{2r}} \d r\\
    &\leq \sum_{\substack{v \in \Z^d\setminus \{0\}: \d_{\Z^d}(0, y+nv) \\\qquad\leq C_d \sqrt{t_*'\log(n^d)}}} \hspace{-0.6cm} c_d \lvert y + n v\rvert^{2-d} \int_{\frac{d \lvert y + n v\rvert^2}{4 \varepsilon n^2}}^\infty s^{\frac{d}{2} - 2} e^{-s} \d s\\
    &\leq \tilde c_d n^{2-d} \varepsilon^{2 - \frac{d}{2}} \hspace{-0.6cm} \sum_{\substack{v \in \Z^d\setminus\{0\}: \d_{\Z^d}(0, y+nv) \\\qquad\leq C_d \sqrt{t_*'\log(n^d)}}} \Big\lvert \frac{y}{n} + v\Big\rvert^{-2} e^{- \frac{d}{4\varepsilon} \lvert \frac{y}{n} + v\rvert^2}\\
    &\leq c_d' n^{2-d} \varepsilon^{2 - \frac{d}{2}} \prod_{i=1}^d \Bigg(\sum_{v_i = - 2\tilde C_d \log n}^{2 \tilde C_d \log n} e^{- \frac{d}{4\varepsilon} (\frac{y_i}{n} + v_i)^2} \Bigg)\\
    &\leq c_d'' n^{2-d} \varepsilon^{2 - \frac{d}{2}} \Bigg( \int_{\R} e^{- \frac{d}{4\varepsilon} x^2} \d x + 1\Bigg)^d = \hat{c}_d n^{2-d} \varepsilon^{2 - \frac{d}{2}}.
\end{align*}
In the third line we used that for all $x > 1$, 
\begin{align*}
    \int_{x}^\infty s^{d/2} e^{-s}\d s \leq x^{d/2-2} e^{-x} \bigg(1 + \sum_{k=2}^{\lfloor d/2\rfloor} \prod_{j=2}^k \Big(\frac{d}{2} -j\Big)\bigg).
\end{align*}
Note also that there exists an absolute constant $c > 0$ such that for all $\lvert z\rvert \leq 2 C_d \sqrt{t_*' \log(n^d)}$, $t_*' \geq s \geq \varepsilon n^2$: $p_{\lfloor s\rfloor}(z) / p_{s}(z) \in \big(1 - c \frac{\log^2 n}{\varepsilon^2 n^2}, 1 + c \frac{\log^2 n}{\varepsilon^2 n^2}\big)$. 
Since $\lvert y+nv \rvert \leq \d_{\Z^d}(0, y+nv) \leq d \lvert y+nv \rvert$, and $\frac{1}{2} p_k$ is the density function of $\mathcal{N}\big(0, \frac{k}{d} \mathrm{Id}_d\big)$, this yields that 
\begin{align*}
    J_n(t) &\lessgtr_a \Big(1 \pm c \frac{\log^2 n}{\varepsilon^2 n^2}\Big) \int_{\varepsilon n^2}^{t} \d s \int_{\mathcal{B}^d\big( \frac{-y}{n}, a C_d \sqrt{\frac{t_*' \log(n^d)}{n^2}} \big) \setminus [-\frac{1}{2}, \frac{1}{2})^d} \frac{p_s\big(n \big(\frac{y}{n} + \lfloor v\rfloor \big)\big)}{2} \d v + \mathcal{O} (n^{2-d})\\
    &= \frac{n^2}{n^d} \Big(1 \pm c \frac{\log^2 n}{\varepsilon^2 n^2}\Big) \int_{\varepsilon}^{\frac{t}{n^2}} \d r \int_{\mathcal{B}^d\big( \frac{-y}{n}, aC_d \sqrt{\frac{t_*' \log(n^d)}{n^2}} \big) \setminus [-\frac{1}{2}, \frac{1}{2})^d} \frac{p_{r}\big(\frac{y}{n} + \lfloor v\rfloor \big)}{2} \d v + \mathcal{O}(n^{2-d}),
\end{align*}
where $\lfloor v\rfloor \in \Z^d$ stands for the point on the grid $\Z^d$ such that $v \in \lfloor v\rfloor + [-1/2,1/2)^d$ and $\lessgtr_a$ here means that $J_n(t) \leq$ r.h.s. with $a= 2$, non-negative $\mathcal{O}(n^{2-d})$ and $+$ in $1 \pm c \frac{\log^2 n}{\varepsilon^2 n^2}$ and $J_n(t) \geq $ r.h.s. for $a= 1/(2d)$ with $\mathcal{O}(n^{2-d}) = 0$ and $-$ in $1 \pm c \frac{\log^2 n}{\varepsilon^2 n^2}$. Observe that 
\begin{align*}
    0 \leq \frac{n^2}{n^d} \int_{\varepsilon}^{\frac{t}{n^2}} \d r \frac{p_{r}\big(\frac{y}{n}\big)}{2} \leq C_d \frac{n^2}{n^d} \int_{\varepsilon}^{\frac{t}{n^2}} r^{-\frac{d}{2}} \d r \leq \mathcal{O}(n^{2-d}).
\end{align*}
Therefore, we can add $\big[-\frac{1}{2}, \frac{1}{2}\big)^d$ back to the integral by adding $-\mathcal{O}(n^{2-d})$ to the lower bound of $J_n(t)$, for the upper bound other than that nothing changes.
Using first less precise bounds, we further show that 
\begin{align}
\label{eq:J_n_term}
    J_n(t) = \frac{n^2}{n^d} \int_{\varepsilon}^{\frac{t_*}{n^2}} \d r \int_{\R^d} \frac{p_{r}\big(\frac{y}{n} + \lfloor v\rfloor \big)}{2} \d v + \mathcal{O}(n^{2-d}).
\end{align}
By the mean-value theorem, for any $v \in \mathcal{B}^d\big( \frac{-y}{n}, 2C_d \sqrt{\frac{t_*' \log(n^d)}{n^2}} \big)$, there exists $c_v \in [0,1]$ such that
\begin{align*}
    \Big\lvert p_{r} \Big(\frac{y}{n} + \lfloor v\rfloor \Big) &- p_{r} \Big(\frac{y}{n} + v \Big)\Big\rvert \\
    &\leq p_{r} \Big(\frac{y}{n} + (1-c_v) v + c_v \lfloor v\rfloor \Big) \frac{d \big\lvert\frac{y}{n} + (1-c_v) v + c_v \lfloor v\rfloor \big\rvert}{r} \lvert v - \lfloor v\rfloor\rvert \\
    &\leq e^{\frac{d}{2r}c_v^2 \lvert v - \lfloor v\rfloor\rvert^2} p_{r} \Big(\frac{1}{\sqrt{2}} \Big(\frac{y}{n} + v\Big) \Big) d^{3/2} \frac{\big\lvert\frac{y}{n} + v \big\rvert + \sqrt{d}}{r} \\
    &\leq c_d(\varepsilon) p_{r}\Big(\frac{1}{\sqrt{2}} \Big(\frac{y}{n} + v\Big) \Big) \frac{\big\lvert\frac{y}{n} + v \big\rvert + 1}{r}.
\end{align*}
Furthermore, for any $C > 0$,  $0 < s \leq t_*'$,
\begin{align*}
    \P\big[ \norm{\mathcal{N} (0, (s/d) \mathrm{Id}_d)} &\geq C \sqrt{t_*' \log(n^d)}\big] \leq \P\big[ \norm{\mathcal{N}(0, \mathrm{Id}_d)} \geq C \sqrt{d\log(n^d)}\big]\\
    &\leq 1 - \P\big[\lvert \mathcal{N}(0,1)\rvert \leq C \sqrt{\log(n^d)}\big]^d \leq 1 - (1 - n^{-d C^2})^d = \mathcal{O}(n^{-d C^2}).
\end{align*}
Hence, for $R \coloneqq a C_d \sqrt{\frac{t_*' \log(n^d)}{n^2}}$,
\begin{align*}
    \int_{\mathcal{B}^d\big( \frac{-y}{n}, R\big)} \frac{p_{r}\big(\frac{y}{n} + \lfloor v\rfloor \big)}{2} \d v
    &= \int_{\mathcal{B}^d\big( 0, R \big)} \frac{p_{r} (u)}{2} \d u 
    \pm \frac{c_d(\varepsilon)}{r} \mathcal{O} \bigg(\int_{\mathcal{B}^d\big( 0, R \big)} \frac{ p_{r} (u/\sqrt{2}) (\lvert u\rvert + 1)}{2} \d u \bigg)\\
    &= \P[ \norm{\mathcal{N}(0, (r/d) \mathrm{Id}_d)} \leq R] 
    \pm \frac{c_d(\varepsilon)}{r} \mathcal{O} \Big(1 + \E[\norm{\mathcal{N}(0, (r/d) \mathrm{Id}_d)}]\Big) \\
    &= 1 - \mathcal{O}(n^{-d C_d^2}) \pm \frac{1}{\sqrt{r}} \mathcal{O}(1)
\end{align*}
and analogously,
\begin{align*}
    \int_{\mathcal{B}^d\big( \frac{-y}{n}, R\big)^c} \frac{p_{r}\big(\frac{y}{n} + \lfloor v\rfloor \big)}{2} \d v
    &= \P[ \norm{\mathcal{N}(0, (r/d) \mathrm{Id}_d)} \geq R] ( 1+ \mathcal{O}(1/r))  \\
    &\qquad\pm \frac{1}{r} \mathcal{O} \Big(\E[\norm{\mathcal{N}(0, (r/d) \mathrm{Id}_d)}^2]^{1/2} \P[ \norm{\mathcal{N}(0, (r/d) \mathrm{Id}_d)} \geq R]^{1/2}\Big) \\
    &= \mathcal{O}(n^{-d C_d^2/2}) \Big(1 \pm \frac{1}{\sqrt{r}}\Big)
\end{align*}
where we additionally used Cauchy inequality. This in turn implies that 
\begin{align*}
    \frac{\log^2 n}{n^d} \int_{\varepsilon}^{\frac{t}{n^2}} \d r \int_{\mathcal{B}^d\big( \frac{-y}{n}, R\big)} \frac{p_{r}\big(\frac{y}{n} + \lfloor v\rfloor \big)}{2} \d v &= \mathcal{O}\Big(\frac{t}{n^2} \frac{\log^2 n}{n^d} \Big) = o(n^{2-d}); \\
    \frac{n^2}{n^d} \int_{\varepsilon}^{\frac{t}{n^2}} \d r \int_{\mathcal{B}^d\big( \frac{-y}{n}, R\big)^c} \frac{p_{r}\big(\frac{y}{n} + \lfloor v\rfloor \big)}{2} \d v &= \mathcal{O}\Big(n^{-d C_d^2/2} \frac{t}{n^d}\Big) = o(n^{2-d}); \\
    \frac{n^2}{n^d} \int_{\frac{t_*}{n^2}}^{\frac{t}{n^2}} \d r \int_{\mathcal{B}^d\big( \frac{-y}{n}, R\big)} \frac{p_{r}\big(\frac{y}{n} + \lfloor v\rfloor \big)}{2} \d v &=  \mathcal{O}\Big(\frac{t - t_*}{n^d}\Big) = o(n^{2-d}).
\end{align*}
Hence, \eqref{eq:J_n_term}. Note that the above arguments as well imply that $\lvert J_n(t) - \frac{t_*}{n^d}\rvert = \mathcal{O} \big(n^{2-d} \sqrt{\frac{t_*}{n^2}}\big) = \mathcal{O}(n^{2-d} \sqrt{\log n})$. 

We now prove that $\lvert J_n(t) - \frac{t_*}{n^d}\rvert = \mathcal{O}(n^{2-d})$.
By Taylor's theorem for multivariate functions, for $k \in \Z^d, x \coloneqq \frac{y}{n} + k$ and $u, w \in B \coloneqq [-1/2, 1/2)^d$:
\begin{align*}
    p_{r} (x) = p_{r} (x + u) - u^T\nabla p_r (x) - u^T \int_0^1 (1-t) D^2 p_r (x + tu) \d t \: u.
\end{align*}
Note that the second term on the right-hand side is symmetric w.r.t. the origin as a function in $u$. Hence, if we integrate both sides w.r.t. $u$ and $w$ over $B$,
\begin{align*}
    p_r(x) - \int_{B} p_{r} (x + u) \d u &+ \int_B \int_{B}\int_0^1 (1-t) u^T D^2 p_r (x + tu + w) \:u \:\d t \d u \d w  \\
    &= \int_B \int_{B} \int_0^1  (1-t) u^T \big(D^2 p_r (x + tu + w) - D^2 p_r (x + tu)\big) u\: \d t \d u \d w.
\end{align*}
since $\lvert B\rvert = 1$. Observe that $(D^2 p_r (z))_{i,j} = \big( \frac{d}{r}\big)^2 p_r(z) \big(z_i z_j - \frac{r}{d} \delta_{i,j}\big)$ with the maximal eigenvalue $\big( \frac{d}{r}\big)^2 p_r(z) \big(\lvert z\rvert^2 - \frac{r}{d}\big)$ and the remaining ones equal to $- \frac{d}{r} p_r(z)$. Thus, since $r > \varepsilon > 0$,
\begin{align*}
    \Big\lvert \int_{B}\int_0^1 (1-t) u^T D^2 p_r (z + tu) \:u \:\d t \d u \Big\rvert &\leq \int_{B}\int_0^1 \lvert u\rvert^2 \Big( \frac{d}{r}\Big)^2 p_r(z+ tu) \Big(2\lvert z\rvert^2 + 2\lvert u\rvert^2 +  \frac{r}{d}\Big) \d t \d u\\
    &\leq \Big( \frac{d}{r}\Big)^2 \frac{d}{4} \Big(2\lvert z\rvert^2 + \frac{d}{2} +  \frac{r}{d}\Big) \int_{B}\int_0^1 e^{\frac{d}{2r} t^2\lvert u\rvert^2} p_r\Big(\frac{z}{\sqrt{2}}\Big) \d t \d u\\
    &\leq \Big( \frac{d}{r}\Big)^2 \Big(2\lvert z\rvert^2 + \frac{d}{2} +  \frac{r}{d}\Big) c_d(\varepsilon) p_r\Big(\frac{z}{\sqrt{2}}\Big). 
\end{align*}
The latter as a function in $z$ is clearly integrable, hence, by Fubini's theorem
\begin{align*}
    \sum_{x \in \frac{y}{n} + \Z^d} \bigg(\int_{B} p_{r} (x + u) \d u &- \int_B \int_{B}\int_0^1 (1-t) u^T D^2 p_r (x + tu + w) \:u \:\d t \d u \d w \bigg) \\ = \int_{\R^d} p_r(z) \d z &- \int_{B}\int_0^1 (1-t) u^T \int_{\R^d} D^2 p_r (z+ tu) \d z\:u \:\d t \d u =2. 
\end{align*}
Here we also used that $\frac{1}{2} p_r(z)$ is a density function of $\mathcal{N}(0, \frac{r}{d} \mathrm{Id}_d)$ and so, $\int_{\R^d} p_r(z) \big(z_i z_j - \frac{r}{d} \delta_{i,j}\big) = 2\big(\frac{r}{d} \delta_{i,j} - \frac{r}{d} \delta_{i,j}\big) = 0$. Moreover, by the mean-value theorem, for any $i, j \leq d$, $z \in \R^d$,
\begin{align*}
    \lvert \big(D^2 p_r (z &+ w) - D^2 p_r (z)\big)_{i,j}\rvert \leq \sup_{c \in [0,1]} \lvert \nabla (D^2 p_r)_{i,j} (z+ cw)\rvert \lvert w\rvert \\
    &= \Big(\frac{d}{r}\Big)^3 \sup_{c \in [0,1]} p_r(z+ cw) \bigg\lvert \bigg( -\prod_{l \in \{i,j,k\}}(z+ cw)_l + \frac{r}{d}\sum_{l \in \{i,j,k\}} (z + cw)_l \delta_{\{i,j,k\} \setminus \{l\}} \bigg)_k \bigg\rvert \lvert w\rvert \\
    &\leq \Big(\frac{d}{r}\Big)^3 p_r \Big(\frac{z+ w}{\sqrt{2}}\Big) \frac{\sqrt{d}}{2} \sup_{c \in [0,1]} e^{\frac{d}{2r} (1-c)^2 \lvert w\rvert^2} \Big ( \lvert z+ cw\rvert^3 + 3 \sqrt{d} \frac{r}{d}\lvert z+ cw\rvert \Big) \\
    &\leq C(d, \varepsilon) \Big(\frac{d}{r}\Big)^3 p_r \Big(\frac{z+ w}{\sqrt{2}}\Big) \Big(\lvert z + w\rvert^3 + \lvert z + w\rvert^2 + \frac{r}{d} \lvert z + w\rvert + \lvert z + w\rvert + \frac{r}{d} + 1 \Big)\\
    &\leq \tilde C(d, \varepsilon) \Big(\frac{d}{r}\Big)^3 p_r \Big(\frac{z+ w}{\sqrt{2}}\Big) \Big(\lvert z + w\rvert^3 + \frac{r}{d} \lvert z + w\rvert + \frac{r}{d}\Big).
\end{align*}
For us, of interest is $z = x + tu$. Note that by analogous estimation procedures we can further conclude that 
\begin{align*}
    p_r \Big(\frac{z+ w}{\sqrt{2}}\Big) \Big(\lvert z + w\rvert^3 + \frac{r}{d} \lvert z + w\rvert + \frac{r}{d}\Big) \leq C(d, \varepsilon) p_r \Big(\frac{x+ w}{2}\Big) \Big(\lvert x + w\rvert^3 + \frac{r}{d} \lvert x + w\rvert + \frac{r}{d}\Big)
\end{align*}
This then yields that 
\begin{align*}
    \sum_{x \in \frac{y}{n} + \Z^d} \int_B &\Big\lvert \int_{B} \int_0^1  (1-t) u^T \big(D^2 p_r (x + tu + w) - D^2 p_r (x + tu)\big) u\: \d t \d u \Big\rvert \d w\\
    &\leq C'(d, \varepsilon) \Big(\frac{d}{r}\Big)^3 \sum_{x \in \frac{y}{n} + \Z^d} \int_B p_r \Big(\frac{x+ w}{2}\Big) \Big(\lvert x + w\rvert^3 + \frac{r}{d} \lvert x + w\rvert + \frac{r}{d}\Big) \d w \\
    &\leq \hat C(d, \varepsilon) \Big(\frac{d}{r}\Big)^3 \bigg( \E\Big[\Big\lVert\mathcal{N}\Big(0, \frac{r}{d} \mathrm{Id}_d \Big)\Big\rVert^3 \Big] + \frac{r}{d} \E\Big[\Big\lVert\mathcal{N}\Big(0, \frac{r}{d} \mathrm{Id}_d \Big)\Big\rVert \Big] + \frac{r}{d} \bigg) \\
    &\leq \Bar C(d, \varepsilon) r^{-3/2}.
\end{align*}
Altogether, this part combined with \eqref{eq:J_n_term} show that 
\begin{align*}
    J_n(t) = \frac{n^2}{n^d} \int_{\varepsilon}^{\frac{t_*}{n^2}} \big( 1 \pm \mathcal{O}(1) r^{-3/2} \big) \d r + \mathcal{O}(n^{2-d}) = \frac{t_*}{n^d} \pm \mathcal{O} (n^{2-d})
\end{align*}
with $\mathcal{O}(1)$ uniform in $r > \varepsilon$. This concludes the proof of Claim 2 as desired. 
\end{proof}


\subsection{{``}Boundary{''} estimate for the zero-average Green's function in 3D}
\label{A:3D_bound}

In this subsection we prove the following result, which plays a central role in the proof of \eqref{eq:negativity_Greens_diff}.
\begin{proposition}
\label{A:prop:0avg_bdry_bound}
    Let $d=3$. Consider $\hat U \coloneqq (\partial [-\lfloor n/2\rfloor, \lfloor n/2\rfloor]^d) \cap \Z^d$, and let $U$ be its canonical projection onto $\Lambda = \T^d_n$. There exists an absolute constant $c_* < \frac{3^{2/3}}{2 (4\pi)^{2/3}}$\footnote{The optimal constant should be non-positive.} such that 
    \begin{align}
        \sup_{y \in U} G^{0\text{-}\mathrm{avg.}}_{\Lambda}(0,y) \leq \frac{c_*}{n} \label{A:eq:bdry_bound}
    \end{align}
    for all $n$ sufficiently large.
\end{proposition}

\begin{proof}
Without loss of generality, due to the symmetries of the torus, we may assume that $y_1 \geq \ldots \geq y_d \geq 0$. In particular, since $y \in \hat{U}$, $y_1 = \big\lfloor \frac{n}{2}\big\rfloor$. In Subsection \ref{A:zero_avg_polydecay} we showed that
\begin{equation*}
    \begin{aligned}
        2d \: n^{d-2} G^{0\text{-}\mathrm{avg.}}_\Lambda(0,y) &\leq \frac{d}{2\pi^2} + o(1) + c_d \varepsilon^{2-\frac{d}{2}} + \int_{\varepsilon}^{\frac{t_*}{n^2}} \d r \bigg(\sum_{\substack{k \in \Z^3 \\ \forall i: \lvert k_i\rvert \leq C_d \log n}} q_{r}\Big(\frac{y}{n} + k \Big) - 1\bigg),
    \end{aligned}
\end{equation*}
where $t_* = \frac{d \log(n^d)}{2\pi^2} n^2$ and $q_r(x) = \big(\frac{d}{2\pi r}\big)^{d/2} e^{-\frac{d\lvert x\rvert^2}{2r}}$. 

We begin by verifying that even if $n$ is odd, we can set $y_1 = n/2$, which would lead to an error of order at most $o(1)$. Note that since $\lvert k_1\rvert \leq C_d \log n$, $r >\varepsilon > 0$,
\begin{align*}
    q_{r}\Big(\frac{y}{n} + \Big( \frac{1}{2n}, 0, \ldots, 0\Big) + k \Big) = q_{r}\Big(\frac{y}{n} + k \Big) e^{-\frac{d}{2r}\big( \frac{1}{(2n)^2} + (k_1+ \frac{y_1}{n}) \frac{1}{n}\big)} = q_{r}\Big(\frac{y}{n} + k \Big) \Big( 1 + \mathcal{O}\Big(\frac{\log n}{\varepsilon n}\Big)\Big).
\end{align*}
Since we have already proved in Section \ref{A:zero_avg_polydecay}, that $\sum_{\substack{k \in \Z^3}} q_{r}\big(\frac{y}{n} + k \big) \leq 1 + \mathcal{O}(1) r^{-3/2} \leq c(\varepsilon)$, the above implies that
\begin{equation*}
    \mathcal{O}\Big(\frac{\log n}{\varepsilon n}\Big) \int_{\varepsilon}^{\frac{t_*}{n^2}} \d r \sum_{\substack{k \in \Z^3 \\ \forall i: \lvert k_i\rvert \leq C_d \log n}} q_{r}\Big(\frac{y}{n} + k \Big) = \mathcal{O} \Big( \frac{\log^2 n}{n} \Big) = o(1).
\end{equation*}
Therefore,
\begin{equation}
\label{eq:bound_on_0avgG_bdry}
    \begin{aligned}
        2d \: n^{d-2} G^{0\text{-}\mathrm{avg.}}_\Lambda(0,y) 
        \leq \frac{d}{2\pi^2} + o(1) + c_d \varepsilon^{2-\frac{d}{2}} + \int_{\varepsilon}^{\frac{t_*}{n^2}} \d r \bigg(\sum_{k \in \Z^3} q_{r}\Big(\frac{y^*}{n} + k \Big) - 1\bigg),
    \end{aligned}
\end{equation}
where $y^*_1 = n/2$ and the remaining coordinates are the same as of $y$.

We next show that $\sum_{k \in \Z^3} q_{r}\big(\frac{y^*}{n} + k \big)$ under our previous assumptions on $y^*$ is maximized at $\big(\frac{1}{2}, 0, \ldots, 0\big)$.
For this notice that $q_r(x) = \prod_{i=1}^d f_r(x_i)$ with $f_r$ being a density function of a centered normally distributed random variable of variance $r/d$. Hence,
\begin{align*}
    \sum_{k \in \Z^3} q_{r}\Big(\frac{y^*}{n} + k \Big) = \prod_{i=1}^d \bigg( \sum_{l \in \Z} f_{r}\Big(\frac{y^*_i}{n} + l \Big)\bigg) \eqqcolon \prod_{i=1}^d F_r\Big(\frac{y^*_i}{n}\Big).
\end{align*}
Note that $F_r$ is a function on a one-dimensional continuous torus $\T = [-1/2,1/2]/\sim$ (or alternatively, a periodic function on $\R$). Since furthermore it is in $L^2(\T)$, by Carleson's theorem it coincides with its Fourier series almost everywhere, and thus by continuity, everywhere. We have
\begin{align*}
    F_r\Big(\frac{y^*_i}{n}\Big) = \sum_{p \in \Z} e^{- \frac{r}{2d} (2\pi)^2 p^2} \cos\Big(2\pi p \frac{y^*_i}{n}\Big) \leq \sum_{p \in \Z} e^{- \frac{r}{2d} (2\pi)^2 p^2}
\end{align*}
with equality reached at $y^*_i = 0$. So, from now on let $y^* = \big(\frac{1}{2}, 0, \ldots, 0\big)$ (independently of the value of $y \in \hat{U}$). 

Let us further investigate the last summand of \eqref{eq:bound_on_0avgG_bdry}. Using the aforementioned Fourier series and noticing that all of them are absolutely convergent and integrable as functions of $r$ on the interval $(\varepsilon, \infty)$, we get 
\begin{align*}
    I &\coloneqq \int_{\varepsilon}^{\frac{t_*}{n^2}} \d r \bigg(\sum_{k \in \Z^3} q_{r}\Big(\frac{y^*}{n} + k \Big) - 1\bigg) \\
    &= \int_{\varepsilon}^{\frac{t_*}{n^2}} \d r \bigg[ \sum_{w \in \Z^2 \setminus\{0\}} e^{-\frac{r}{2d}(2\pi)^2 \lvert w\rvert^2} \bigg( \underbrace{1 - 2 \sum_{l=1}^\infty (-1)^{l-1} e^{-\frac{r}{2d} (2\pi)^2 l^2}}_{\in [0,1] \text{ unif. in } r > \varepsilon} \bigg) - 2 \sum_{l=1}^\infty (-1)^{l-1} e^{-\frac{r}{2d} (2\pi)^2 l^2} \bigg].
\end{align*}
Observe that $\int_{t_*/n^2}^\infty \sum_{l=1}^\infty e^{-\frac{r}{2d} (2\pi)^2 l^2} \d r = \mathcal{O}(n^{-1})$. Therefore, the above is bounded by the integral of the integrand over $(\varepsilon, \infty)$ plus $\mathcal{O}(n^{-1})$. By Fubini's theorem, the former integral equals to
\begin{align*}
    \sum_{w \in \Z^2 \setminus\{0\}} \int_{\varepsilon}^{\infty} \d r e^{-\frac{r}{2d}(2\pi)^2 \lvert w\rvert^2} \bigg( 1 - 2 \sum_{l=1}^\infty (-1)^{l-1} e^{-\frac{r}{2d} (2\pi)^2 l^2}\bigg) - 2 \sum_{l=1}^\infty (-1)^{l-1} \frac{2d}{(2\pi)^2 l^2} e^{-\frac{\varepsilon}{2d} (2\pi)^2 l^2}. 
\end{align*}
Note that for a fixed constant $M > 0$ (to be determined, but independent of $\varepsilon$),
\begin{align*}
    \sum_{\substack{w \in \Z^2 \setminus\{0\}\\ \lvert w\rvert > M/\sqrt{\varepsilon}}} \int_{\varepsilon}^{\infty} \d r e^{-\frac{r}{2d}(2\pi)^2 \lvert w\rvert^2} \bigg( \underbrace{1 - 2 \sum_{l=1}^\infty (-1)^{l-1} e^{-\frac{r}{2d} (2\pi)^2 l^2}}_{\leq 1}\bigg) 
    \leq \frac{2d}{(2\pi)^2} \sum_{\substack{w \in \Z^2 \setminus\{0\}\\ \lvert w\rvert > M/\sqrt{\varepsilon}}} \hspace{-0.3 cm} \frac{e^{-\frac{\varepsilon}{2d} (2\pi)^2 \lvert w\rvert^2}}{\lvert w\rvert^2} \\
    \leq \frac{2d}{(2\pi)^2} \bigg( 2\pi \int_{\frac{M}{\sqrt{\varepsilon}}}^\infty \d s \frac{e^{-\frac{\varepsilon}{2d} (2\pi)^2 s^2}}{s} + 4 \int_{\frac{M}{\sqrt{\varepsilon}}}^\infty \d s \frac{e^{-\frac{\varepsilon}{2d} (2\pi)^2 s^2}}{s^2} \bigg) 
    \leq \frac{(2d)^2 e^{-\frac{M^2}{2d} (2\pi)^2}}{(2\pi)^4 M^2} \Big(\pi + 2\frac{\sqrt{\varepsilon}}{M}\Big).
\end{align*}
Altogether, 
\begin{align*}
    I \leq \mathcal{O}\Big( \frac{1}{n}\Big) &+ \frac{(2d)^2 e^{-\frac{M^2}{2d} (2\pi)^2}}{(2\pi)^4 M^2} \Big(\pi + 2\frac{\sqrt{\varepsilon}}{M}\Big) \\
    &+ \frac{2d}{(2\pi)^2} \sum_{\substack{w \in \Z^2 \setminus\{0\}\\ \forall i: \lvert w_i\rvert \leq M/\sqrt{\varepsilon}}} e^{-\frac{\varepsilon}{2d} (2\pi)^2 \lvert w\rvert^2} \bigg[ \frac{1}{\lvert w\rvert^2} - 2 \sum_{\substack{l = 1\\ \text{odd}}}^{M/\sqrt{\varepsilon}} \bigg( \frac{e^{-\frac{\varepsilon}{2d} (2\pi)^2 l^2}}{\lvert w\rvert^2 + l^2} - \frac{e^{-\frac{\varepsilon}{2d} (2\pi)^2 (l+1)^2}}{\lvert w\rvert^2 + (l+1)^2}\bigg) \bigg]\\ 
    &- 2 \frac{2d}{(2\pi)^2}\sum_{\substack{l = 1\\ \text{odd}}}^{\infty} \bigg( \frac{e^{-\frac{\varepsilon}{2d} (2\pi)^2 l^2}}{l^2} - \frac{e^{-\frac{\varepsilon}{2d} (2\pi)^2 (l+1)^2}}{(l+1)^2}\bigg).
\end{align*}

We can write
\begin{align*}
    \frac{1}{\lvert w\rvert^2} &= 2 \frac{2 + \pi \lvert w\rvert \tanh \big( \frac{\pi \lvert w\rvert}{2}\big) - \pi \lvert w\rvert \coth \big( \frac{\pi \lvert w\rvert}{2}\big)}{4 \lvert w\rvert^2} + \pi \frac{\coth \big( \frac{\pi \lvert w\rvert}{2}\big)- \tanh \big( \frac{\pi \lvert w\rvert}{2}\big)}{2 \lvert w\rvert}\\
    &= 2\sum_{k=1}^{\infty} \frac{4k - 1}{(\lvert w\rvert^2 + (2k-1)^2)(\lvert w\rvert^2 + 4k^2)} + \pi \frac{1}{\lvert w\rvert \sinh ( \pi \lvert w\rvert)}\\
    &= 2\sum_{\substack{l=1\\ \text{odd}}}^{\infty} \bigg( \frac{1}{\lvert w\rvert^2 + l^2} - \frac{1}{\lvert w\rvert^2 + (l+1)^2}\bigg) + \pi \frac{1}{\lvert w\rvert \sinh ( \pi \lvert w\rvert)}
\end{align*}
using the following series' representations (cf. \cite[1.217 1, 1.421 2, 1.421 4]{Series_book}) for the hyperbolic trigonometric functions
\begin{align*}
    \tanh(x) &= 8x \sum_{k=1}^\infty \frac{1}{\pi^2 (2k-1)^2+ 4x^2} \quad \forall \; x \in \R;\\
    \coth(x) &= 2x \sum_{k=1}^\infty \frac{1}{\pi^2 k^2+x^2} + \frac{1}{x} \quad \forall \; x \in \R\setminus \{0\}. 
\end{align*}
These expansions follow directly from \cite{Dun09}. 

Thus far, we have shown that
\begin{align*}
    I &\leq C_M \sqrt{\varepsilon} + \frac{(2d)^2 e^{-\frac{M^2}{2d} (2\pi)^2}}{(2\pi)^4 M^2} \pi \\
    &+ \frac{2d}{(2\pi)^2} \sum_{\substack{w \in \Z^2 \setminus\{0\}\\ \lvert w\rvert \leq M/\sqrt{\varepsilon}}} e^{-\frac{\varepsilon}{2d} (2\pi)^2 \lvert w\rvert^2} 2 \Bigg[ \sum_{\substack{l = 1\\ \text{odd}}}^{M/ \sqrt{\varepsilon}} \bigg( \frac{1 -e^{-\frac{\varepsilon}{2d} (2\pi)^2 l^2}}{\lvert w\rvert^2 + l^2} - \frac{1 - e^{-\frac{\varepsilon}{2d} (2\pi)^2 (l+1)^2}}{\lvert w\rvert^2 + (l+1)^2}\bigg) \\
    &\phantom{I+ \frac{2d}{(2\pi)^2} \sum_{\substack{w \in \Z^2 \setminus\{0\}\\ \lvert w\rvert \leq M/\sqrt{\varepsilon}}} e^{-\frac{\varepsilon}{2d} (2\pi)^2 \lvert w\rvert^2}2\bigg[} + \sum_{\substack{l=1 + M/\sqrt{\varepsilon}\\ \text{odd}}}^{\infty} \bigg( \frac{1}{\lvert w\rvert^2 + l^2} - \frac{1}{\lvert w\rvert^2 + (l+1)^2}\bigg) \Bigg] &&\eqqcolon A_1 \\ 
    &+ \frac{2d}{(2\pi)^2} \sum_{\substack{w \in \Z^2 \setminus\{0\}\\ \lvert w\rvert \leq M/\sqrt{\varepsilon}}} e^{-\frac{\varepsilon}{2d} (2\pi)^2 \lvert w\rvert^2} \pi \frac{1}{\lvert w\rvert \sinh ( \pi \lvert w\rvert)} &&\eqqcolon A_2\\
    &- 2 \frac{2d}{(2\pi)^2}\sum_{\substack{l = 1\\ \text{odd}}}^{\infty} \bigg( \frac{e^{-\frac{\varepsilon}{2d} (2\pi)^2 l^2}}{l^2} - \frac{e^{-\frac{\varepsilon}{2d} (2\pi)^2 (l+1)^2}}{(l+1)^2}\bigg) &&\eqqcolon A_3.
\end{align*}
for an appropriate $C_M > 0$. For the term $A_3$, notice that by Fatou's lemma
\begin{align*}
    \liminf_{\varepsilon \rightarrow 0} \sum_{\substack{l = 1\\ \text{odd}}}^{\infty} \bigg( \frac{e^{-\frac{\varepsilon}{2d} (2\pi)^2 l^2}}{l^2} - \frac{e^{-\frac{\varepsilon}{2d} (2\pi)^2 (l+1)^2}}{(l+1)^2}\bigg) \geq \sum_{\substack{l = 1\\ \text{odd}}}^{\infty} \bigg( \frac{1}{l^2} - \frac{1}{(l+1)^2}\bigg) = \frac{\pi^2}{12}.
\end{align*}
Therefore, for all $\varepsilon > 0$ very small, there exists $c(\varepsilon) > 0$ which converges to $0$ as $\varepsilon$ does, such that $A_3 \leq -\frac{2d}{(2\pi)^2} \frac{\pi^2}{6} + c(\varepsilon)$. As for $A_2$, 
\begin{align*}
    \frac{(2\pi)^2}{2d} A_2 &\leq \sum_{w \in \Z^2 \setminus\{0\}} \frac{\pi}{\lvert w\rvert \sinh ( \pi \lvert w\rvert)} \leq 2\pi\int_{2}^\infty \d s \frac{\pi s}{s \sinh (\pi s)} + \sum_{k=1}^\infty \frac{4\pi}{k \sinh (\pi k)} \\
    &\qquad+ \frac{4\pi}{\lvert (1,1)\rvert \sinh ( \pi \lvert (1,1)\rvert)} + \frac{4\pi}{\lvert (2,2)\rvert \sinh ( \pi \lvert (2,2)\rvert)} + \frac{8\pi}{\lvert (1,2)\rvert \sinh ( \pi \lvert (1,2)\rvert)}\\
    &\leq -2\pi \log(\tanh(\pi)) + \frac{4\pi}{\sinh(\pi)} + \int_1^\infty \d s\frac{2\pi}{\sinh (\pi s)} \\
    &\qquad+ 4\pi \bigg( \frac{1}{\sqrt{2} \sinh ( \pi \sqrt{2})} + \frac{1}{2\sqrt{2} \sinh ( \pi 2\sqrt{2})} + \frac{2}{\sqrt{5} \sinh ( \pi \sqrt{5})}\bigg)\\
    &= -2\pi \log(\tanh(\pi)) - 2\log(\tanh(\pi/2))\\
    &\qquad+ 4\pi \bigg(\frac{1}{\sinh(\pi)} + \frac{1}{\sqrt{2} \sinh ( \pi \sqrt{2})} + \frac{1}{2\sqrt{2} \sinh ( \pi 2\sqrt{2})} + \frac{2}{\sqrt{5} \sinh ( \pi \sqrt{5})}\bigg).
\end{align*}
We have $\frac{(2\pi)^2}{2d} (A_2 + A_3) \leq c'(\varepsilon) + 2\pi( 0.2411 - \pi/12) \leq c'(\varepsilon) - 0.04\pi$. 

It remains to estimate $A_1$. We begin by analyzing $l \mapsto \frac{1 -e^{-\frac{\varepsilon}{2d} (2\pi)^2 l^2}}{\lvert w\rvert^2 + l^2} \eqqcolon g(l)$—of interest to us are its regions of monotonicity. For $l \geq 1$, 
\begin{align*}
    \frac{(\lvert w\rvert^2 + l^2)}{2l} g'(l) &= e^{-\frac{\varepsilon}{2d} (2\pi)^2 l^2} \Big(1 + \frac{\varepsilon}{2d} (2\pi)^2(l^2 + \lvert w\rvert^2) \Big) - 1\\
    &\geq \Big( 1- \frac{\varepsilon}{2d} (2\pi)^2 l^2\Big) \Big(1 + \frac{\varepsilon}{2d} (2\pi)^2(l^2 + \lvert w\rvert^2) \Big) - 1\\
    &= -\Big(\frac{\varepsilon (2\pi)^2}{2d} \Big)^2 \bigg( l^4 + l^2 \lvert w\rvert^2 - \frac{2d}{(2\pi)^2\varepsilon} \lvert w\rvert^2 \bigg).
\end{align*}
In particular, the subinterval of $[1, \infty)$ on which $g(l)$ is decreasing ($\{l \geq 1: g'(l) < 0\}$) is therefore included in the range of solutions of the following inequality
\begin{align*}
    l^4 + l^2 \lvert w\rvert^2 &\geq \frac{2d}{(2\pi)^2\varepsilon} \lvert w\rvert^2\\
    &\Updownarrow\\
    l^2 + \frac{\lvert w\rvert^2}{2} &\geq \lvert w\rvert^2 \sqrt{ \frac{2d}{(2\pi)^2\varepsilon} \frac{1}{\lvert w\rvert^2} + \frac{1}{4}}\\
    &\Updownarrow l\geq 1\\
    l &\geq \lvert w\rvert \sqrt{\sqrt{ \frac{2d}{(2\pi)^2\varepsilon} \frac{1}{\lvert w\rvert^2} + \frac{1}{4}} - \frac{1}{2}} \eqqcolon w_*.
\end{align*}
Furthermore, observe that 
\begin{align*}
    g(l) - g(l+1) &= \frac{(2l+1)(1- e^{-\frac{\varepsilon}{2d} (2\pi)^2 l^2}) - (\lvert w\rvert^2 + l^2) (e^{-\frac{\varepsilon}{2d} (2\pi)^2 l^2} - e^{-\frac{\varepsilon}{2d} (2\pi)^2 (l+1)^2})}{(\lvert w\rvert^2 + l^2)(\lvert w\rvert^2 + (l+1)^2)}\\
    &\leq \frac{2l+1}{(\lvert w\rvert^2 + l^2)(\lvert w\rvert^2 + (l+1)^2)}.
\end{align*}
Hence, since on $\{l \geq 1: g'(l) \geq 0\}, \quad g(l) - g(l+1) \leq 0$ (up to a borderline value of $l$), 
\begin{align*}
    2\sum_{\substack{l = 1\\ \text{odd}}}^{M/ \sqrt{\varepsilon}} \bigg( \frac{1 -e^{-\frac{\varepsilon}{2d} (2\pi)^2 l^2}}{\lvert w\rvert^2 + l^2} &- \frac{1 - e^{-\frac{\varepsilon}{2d} (2\pi)^2 (l+1)^2}}{\lvert w\rvert^2 + (l+1)^2}\bigg) + 2\sum_{\substack{l=1 + M/\sqrt{\varepsilon}\\ \text{odd}}}^{\infty} \bigg( \frac{1}{\lvert w\rvert^2 + l^2} - \frac{1}{\lvert w\rvert^2 + (l+1)^2}\bigg)\\ 
    &\leq 2\sum_{\substack{l=w_* - 1\\ \text{odd}}}^{\infty} \frac{2l+1}{(\lvert w\rvert^2 + l^2)(\lvert w\rvert^2 + (l+1)^2)} \\
    &\leq \int_{w_* - 1}^\infty \frac{(2x+1) \d x}{(\lvert w\rvert^2 + x^2)(\lvert w\rvert^2 + (x+1)^2)} + \frac{C}{\lvert w\rvert^3} \ind_{\big\{\lvert w\rvert \geq \frac{c}{\sqrt{\varepsilon}}\big\}}\\
    &= \frac{1}{\lvert w\rvert} \bigg[ \arctan \Big( \frac{w_*}{\lvert w\rvert}\Big) - \arctan \Big( \frac{w_*-1}{\lvert w\rvert}\Big)\bigg] + \frac{C}{\lvert w\rvert^3} \ind_{\big\{\lvert w\rvert \geq \frac{c}{\sqrt{\varepsilon}}\big\}}\\
    &\leq \frac{1}{\lvert w\rvert^2 + (w_*-1)^2} + \frac{C}{\lvert w\rvert^3} \ind_{\big\{\lvert w\rvert \geq \frac{c}{\sqrt{\varepsilon}}\big\}}
\end{align*}
for appropriate constants $c, C>0$. The second summand in the third line comes from the evaluation of the integrand at its local maximum point (which clearly has the leading term proportional to $\lvert w\rvert$\footnote{An explicit computation shows that the pre-coefficient of the leading term of the local maximum point is $1/\sqrt{3}$.}) if it is greater or equal to $w_*-1$. Note further that $w_* - 1 \geq w_* (1 - \sqrt{c} \varepsilon^{1/4})$ for some $c>0$ and 
\begin{align*}
    \lvert w\rvert^2 + w_*^2 (1 - \sqrt{c} \varepsilon^{1/4})^2 &\geq \lvert w\rvert^2 \Bigg( 1 + (1 - c \sqrt{\varepsilon}) \bigg(\sqrt{\frac{1}{4}+ \frac{2d}{(2\pi)^2\varepsilon} \frac{1}{\lvert w\rvert^2}} - \frac{1}{2}\bigg) \Bigg) \\
    &\geq \frac{1}{2} \Bigg(\lvert w\rvert^2 + 2 \frac{1-c\sqrt{\varepsilon}}{1+c\sqrt{\varepsilon}} \sqrt{\frac{2d}{(2\pi)^2\varepsilon}} \lvert w\rvert \Bigg).
\end{align*} 
For simplicity set $a \coloneqq 2 \frac{1-c\sqrt{\varepsilon}}{1+c\sqrt{\varepsilon}} \sqrt{\frac{2d}{(2\pi)^2\varepsilon}} \approx 2 \sqrt{\frac{2d}{(2\pi)^2\varepsilon}}$. It is now possible to estimate $A_1$, 
\begin{align*}
    \frac{(2\pi)^2}{2d} A_1 &\leq \sum_{\substack{w \in \Z^2 \setminus\{0\}\\ \lvert w\rvert \leq M/\sqrt{\varepsilon}}} \bigg[ \frac{2}{\lvert w\rvert^2 + a \lvert w\rvert} + \frac{C}{\lvert w\rvert^3} \ind_{\big\{\lvert w\rvert \geq \frac{c}{\sqrt{\varepsilon}}\big\}}\bigg] \\ 
    &\leq 2\pi \int_{1}^{\frac{M}{\sqrt{\varepsilon}}} \frac{2r \d r}{r^2 + a r} + 4 \int_{1}^{\infty} \frac{2\d r}{r^2 + a r} + C' \int_{\frac{c}{\sqrt{\varepsilon}}}^\infty \frac{r\d r}{r^3} + \frac{8}{1 + a}\\
    &\leq C\sqrt{\varepsilon} + 4\pi \log \bigg( \frac{a + M/\sqrt{\varepsilon}}{1 + a}\bigg) + 8 \frac{\log(a+1)}{a}\\
    &\leq 4\pi \log \Big( 1 + \frac{2\pi M}{2 \sqrt{2d}}\Big) + c(\varepsilon)
\end{align*}
with $c(\varepsilon) \rightarrow 0$ as $\varepsilon \rightarrow 0$. 

To sum up, for all sufficiently large $n$,
\begin{align*}
    n^{d-2} G^{0\text{-}\mathrm{avg.}}_{\Lambda}(0,y) \leq \frac{1 - 0.04\pi + 4\pi \log \Big( 1 + \frac{2\pi M}{2 \sqrt{2d}}\Big) + \pi \frac{2d}{(2\pi)^2 M^2} e^{-\frac{M^2}{2d} (2\pi)^2}}{(2\pi)^2} + c(\varepsilon) + c_d \varepsilon^{2-\frac{d}{2}}
\end{align*}
with $c(\varepsilon)>0$ that can be made arbitrarily small by choosing $\varepsilon > 0$ sufficiently small (but non-zero). Note that since $d= 3$, the last summand is as well arbitrary small. Choose $M> 0$ such that $\frac{2\pi M}{\sqrt{2d}} = 1$. Then,
\begin{align}
\label{eq:final_bound_G0avg}
    n G^{0\text{-}\mathrm{avg.}}_{\Lambda}(0,y) \leq \frac{1 - 0.04\pi + 4\pi \log(3/2) + \pi e^{-1}}{(2\pi)^2} + c(\varepsilon) \eqqcolon c_* < \frac{3^{2/3}}{2 (4\pi)^{2/3}}.
\end{align}
\end{proof} 


\subsection{Multivariate local central limit theorem: Proof of Proposition \ref{prop:dens_conv}}
\label{A:proof_densconv}

In this subsection, we prove a local central limit theorem for the densities of i.i.d. random vectors used in the study of the infinite spin-dimensionality limit of the spin $O(N)$ model. 
\begin{proposition}
    Let $(X^i)_{i\in \N}$ be a sequence of i.i.d. centered random vectors with the probability density function (w.r.t. Lebesgue measure on $\R^k$) $f: \R^k \rightarrow \R$. Assume that 
    \begin{enumerate}
        \item $f \in L^r(\R^k)$ for some $r \in (1,2]$;
        \item All the entries of the covariance matrix $C$ of $X^1 = (X^1_j)_{j=1}^k$ are well-defined and finite, or equivalently, for all $1\leq j\leq k$, $X_j^1 \in L^2(\P)$;
        \item $C$ is positive definite.
    \end{enumerate}
    Then the relation 
    \begin{align*}
        n^{k/2} f^{(n)} (\sqrt{n} v) \xrightarrow[]{n\rightarrow \infty} \frac{1}{(2\pi)^{k/2}\sqrt{\det C}} \: e^{-\frac{1}{2} (v, C^{-1}v)}
    \end{align*}
    holds uniformly over $v \in \R^k$. Here, $f^{(n)}$ is the $n$-fold convolution of $f$, as well as the density function of $\sum_{i=1}^n {X}^i$.
\end{proposition}
The following proof follows the lines of the one for the case $k=1$ in \cite[Chp.8 $\S$46 Theorem 1]{gnedenko1968limit}. Note that, deviating from the notation used in the main body of the paper, we write $(x,t)$ to denote the canonical inner product on~$\R^k$.

\begin{proof}
    Let $\psi$ denote the characteristic function of $X^1$, and $\psi_n$ the characteristic function of $\sum_{i=1}^n X^i$. In particular, 
    \begin{align*}
        \psi({t}) = \int_{\R^k} e^{i (x,t)} f({x}) \d{x} \quad \text{and} \quad \psi_n({t}) = \int_{\R^k} e^{i (x,t)} f^{(n)}({x}) \d{x} = (\psi({t}))^n.
    \end{align*}
    By assumption $f$ belongs to $L^r(\R^k)$ and to $L^1(\R^k)$ as a density function. Therefore, also $f \in L^u(\R^k)$ for any $u \in (1,r)$. This follows from Hölder's inequality, as we now explain. Let $\theta = \frac{r}{r-1} \frac{u-1}{u} \in (0,1)$, $p = \frac{r-1}{u-1} \in (1,\infty)$ and $p'= p/(p-1)$ such that $\frac{1}{p} + \frac{1}{p'} = 1$. Then, by Hölder's inequality,
    \begin{align*}
        \int_{\R^k} \lvert f\rvert^{u} \d {x} \leq \norm{ \lvert f\rvert^{u\theta}}_{L^p} \norm{\lvert f\rvert^{u(1-\theta)}}_{L^{p'}} = \norm{f}_{L^r}^{r/p} \norm{f}_{L^1}^{1/{p'}} <\infty.
    \end{align*}
    
    Let $\hat{f}({t}) \coloneqq \int_{\R^k} e^{-2\pi i(x,t)} f({x}) \d{x}$ be the Fourier transform of $f$. Then, by Hausdorff-Young inequality (see, e.g., \cite[Proposition 2.2.16]{Grafakos_2014}) we know that if $f \in L^{p}(\R^k)$ for some $p \in [1,2]$, then $\hat{f} \in L^{p'}(\R^k)$ for $p'=\frac{p}{p-1}$. Thus, the above observations about $f$ and the fact that $\hat{f}({t}) = \psi(-2\pi {t})$ yield that $\psi$ is in $L^{q}(\R^k)$ for any $q \geq \frac{r}{r-1}$. In particular, we get that for any $n \geq \frac{r}{r-1}$, $\psi_n$ is integrable. Hence, by inversion formula theorem,
    \begin{align*}
        f^{(n)}({x}) = \frac{1}{(2\pi)^k} \int_{\R^k} e^{- i (x,t)} \psi_n({t}) \d{t},
    \end{align*}
    and thus,
    \begin{align*}
        n^{k/2} f^{(n)}(\sqrt{n}{x}) = \frac{1}{(2\pi)^k} \int_{\R^k} e^{- i (x,t)} \psi_n({t}/\sqrt{n}) \d{t}.
    \end{align*}
    
    On the other hand, we know that
    \begin{align*}
        \frac{1}{(2\pi)^{k/2}\sqrt{\det(C)}} \: e^{-\frac{1}{2} ({x}, C^{-1}{x})} = \frac{1}{(2\pi)^k} \int_{\R^k} e^{- i (x,t) - \frac{1}{2} ({t}, C{t})} \d{t}.
    \end{align*}
    Therefore, to conclude the proof it suffices to show that 
    \begin{align*}
        R_n({x}) \coloneqq \int_{\R^k} e^{- i (x,t)} \left( \psi_n({t}/\sqrt{n}) - e^{- \frac{1}{2} ({t}, C{t})}\right)\d{t}
    \end{align*}
    converges to zero as $n$ tends to infinity uniformly over ${x} \in \R^k$. By central limit theorem (multivariate version) and dominated convergence theorem ($\norm{\psi_n}_{L^\infty} \leq 1$), for any $r> 0$ fixed,
    \begin{align*}
        \left\lvert \int_{B^k(0,r)} e^{- i (x,t)} \left( \psi_n({t}/\sqrt{n}) - e^{- \frac{1}{2} ({t}, C{t})}\right)\d{t} \right\rvert \xrightarrow{n\rightarrow \infty} 0 \quad\text{uniformly over } \:{x}.
    \end{align*}
    Moreover, by taking $r$ sufficiently large we can make 
    $\left\lvert \int_{\lvert {t}\rvert > r} e^{- i (x,t)- \frac{1}{2} ({t}, C{t})}\d{t} \right\rvert$
    arbitrarily small. It remains to treat the term 
    \begin{align*}
        I \coloneqq \int_{\lvert {t}\rvert > r} e^{- i (x,t)} \psi_n({t}/\sqrt{n}) \d{t},
    \end{align*}
    which we further split into two integrals $I_1$ and $I_2$ over $r < \lvert {t}\rvert \leq \varepsilon \sqrt{n}$ and $\lvert {t}\rvert > \varepsilon\sqrt{n}$, respectively, for some $\varepsilon>0$ small. 
    
    We start with $I_1$. By our assumptions, $\psi$ is twice differentiable with 
    \begin{align*}
        \nabla\psi ({0}) = {0}, \quad D^2 \psi({0}) = -C.
    \end{align*}
    Hence, by Taylor's theorem, in the neighbourhood of ${0}$,
    \begin{align*}
        \psi({t}) - 1 = (\nabla\psi ({0}))^T {t} + \frac{1}{2} {t}^T (D^2\psi({0})) {t} + o(\lvert {t}\rvert^2) = -\frac{1}{2} {t}^T C {t} + o(\lvert {t}\rvert^2).
    \end{align*}
    By assumption, $C$ is positive definite, so it is possible to choose $\varepsilon>0$ sufficiently small such that for all $\lvert{t}\rvert \leq \varepsilon$, the remainder of the expansion $o(\lvert {t}\rvert^2)$ is smaller than $\frac{1}{4} {t}^T C {t}$. Then, $\lvert \psi({t})\rvert \leq 1 - \frac{1}{4} {t}^T C {t} \leq e^{-\frac{1}{4} {t}^T C {t}}$ and
    \begin{align*}
        \lvert I_1\rvert \leq \int_{r < \lvert {t}\rvert \leq \varepsilon \sqrt{n}} \lvert\psi_n({t}/\sqrt{n}) \rvert\d{t} 
        \leq \int_{r < \lvert {t}\rvert \leq \varepsilon \sqrt{n}} e^{-n\frac{1}{4n} {t}^T C {t}} \d{t} 
        \leq \int_{r < \lvert {t}\rvert} e^{-\frac{1}{4} {t}^T C {t}} \d{t}. 
    \end{align*}
   Thus, by taking $r$ sufficiently large we can make $\lvert I_1\rvert$ arbitrarily small. 
   
   Finally, we treat $I_2$. Recall that $\hat{f}({t}) = \psi(-2\pi {t})$, and thus by Riemann-Lebesgue lemma (see \cite[Proposition 2.2.17]{Grafakos_2014}), $\lvert \psi({t}) \rvert \rightarrow 0$ as $\lvert{t}\rvert \rightarrow \infty$. Moreover, $\lvert \psi({t})\rvert < 1$ for ${t} \neq{0}$. Indeed, suppose first that there exists a non-zero ${t}$ such that $\psi({t}) = 1$. Then the following must hold:
   \begin{align*}
       \int \underbrace{[1- \cos({(x,t)})]}_{\geq 0} f({x}) \d{x} = \int \underbrace{[1-\sin({(x,t)})]}_{\geq 0} f({x}) \d{x} = 0.
   \end{align*}
   And hence, $1- \cos({(x,t)}) = 0$ for $f({x}) \d{x}$-a.e. ${x}$, which is clearly impossible unless ${t}$ is zero. Suppose now that $\lvert\psi({t})\rvert = 1$, and consider $Z \coloneqq X^1 - X^2$. Then $\psi_Z({t}) = 1$, and we get that for $f_Z({x}) \d{x}$-a.e. ${x}$, $1- \cos({(x,t)}) = 0$. This again implies that ${t} = 0$. \\
   Altogether, we proved that there exists $c>0$ such that $\lvert\psi({t})\rvert < e^{-c}$ for all $\lvert {t}\rvert > \varepsilon$. Thus, for a fixed $q > \frac{r}{r-1}$,
   \begin{align*}
       \lvert I_2\rvert \leq e^{-c(n-q)} \int_{\lvert {t}\rvert > \varepsilon \sqrt{n}} \lvert\psi({t}/\sqrt{n}) \rvert^q \d{t} = \underbrace{e^{-c(n-q)} n^{k/2}}_{\xrightarrow{n\rightarrow\infty} 0} \underbrace{\int_{\lvert {t}\rvert > \varepsilon} \lvert\psi({t}) \rvert^q \d{t}}_{<\infty}.
   \end{align*}
\end{proof}


\subsection{Local CLT for triangular arrays of independent random variables: Proof of Proposition \ref{prop:dens_conv_1D}}
\label{A:proof_densconv_1D}

This subsection establishes a local CLT for the normalized sum of independent, non-identically distributed random variables.
\begin{proposition}
    Let $(X_{i,n})_{i\leq n, n \in \N}$ be a triangular array of independent centered random variables with the probability density functions $f_{i,n}$ (w.r.t. Lebesgue measure on $\R$). Assume that 
    \begin{enumerate}
        \item $f_{i,n} \in L^r(\R)$ for some $r \in (1,2]$ (independent of $i,n$) such that $\sup_{i, n} \norm{f_{i,n}}_{L^r} \leq M$ for some $M> 0$;
        \item For all $i \leq n, n \in \N$, $\sigma_{i,n}^2 \coloneqq \mathrm{Var}[X_{i,n}] < \infty$ ordered such that $\sigma_{1,n}^2 \geq \ldots \geq \sigma_{n,n}^2$;
        \item Lindeberg's condition is satisfied, that is, for any $\varepsilon > 0$,
        \begin{align*}
            \frac{\sum_{i \geq 1} \E[X^2_{i,n} \ind_{\{ \lvert X_{i,n}\rvert > \varepsilon s_n\}}]}{s_n^2} \xrightarrow{n \rightarrow \infty} 0,
        \end{align*}
        where $s_n^2 \coloneqq \sum_{i=1}^n \sigma_{i,n}^2$.
        \item There exist $\delta > 0$ (uniform), $K(n) \geq 1$, $l_*(n) \geq 1, n \geq l^*(n) \geq 2 \lceil\frac{r}{r-1}\rceil$ such that for all $n$ sufficiently large
        \begin{align*}
            (a) &\; \frac{\sum_{i \geq l_*(n)} \sigma_{i,n}^2}{\sum_{i=1}^n \sigma_{i,n}^2} \geq \delta;\\
            (b) &\; \frac{\sum_{i \geq l_*(n)} \E[X^2_{i,n} \ind_{\{ \lvert X_{i,n}\rvert > K(n)\}}]}{\sum_{i \geq l_*(n)} \sigma_{i,n}^2} \leq \frac{1}{8}; \\
            (c) &\; \frac{n - l^*(n)}{\sigma_{l^*, n}^2 \vee K(n)^2} \gg \log ( s_n^2).
        \end{align*}
    \end{enumerate}
    Then the relation 
    \begin{align*}
        s_n f^{(n)} (s_n x) \xrightarrow{n\rightarrow \infty} \frac{1}{\sqrt{2\pi}} \: e^{-\frac{x^2}{2}}
    \end{align*}
    holds uniformly over $\R$. Here, $f^{(n)}$ is the convolution of $f_{1,n}, \ldots, f_{n,n}$, as well as the density function of $\sum_{i=1}^n X_{i,n}$. Note that $s_n f^{(n)}(s_n x)$ is the density of the normalized sum $\frac{1}{s_n} \sum_{i=1}^n X_{i,n}$. 
\end{proposition}

\begin{proof}
    Let $\psi_{i,n}$ denote the characteristic function of $X_{i,n}$, and $\psi^{(n)}$ the characteristic function of $\sum_{i=1}^n X_{i,n}$, $\hat{g}(t) \coloneqq \int_{\R} e^{-2\pi i x t} g(x) \d x$ be the Fourier transform of $g: \R \rightarrow \R$ (for an appropriate $g$).
    By the same argument as in Section \ref{A:proof_densconv}, $\hat f_{i,n}, \psi_{i,n} \in L^{q}(\R)$ for any $q \geq \frac{r}{r-1}$. In particular, by Hölder's inequality, for any fixed $q \geq \frac{r}{r-1}$ and $n \geq q$, 
    \begin{align*}
        \norm{\psi^{(n)}}_{L^1} = \Big\lVert\prod_{i=1}^n \psi_{i,n}\Big\rVert_{L^1} \leq \prod_{i=1}^n \norm{\psi_{i,n}}_{L^n} < \infty.
    \end{align*} 
    Hence, by inversion formula theorem, $f^{(n)}(x) = \frac{1}{2\pi} \int_{\R} e^{- i x t} \psi^{(n)}(t) \d t$.
    
    To conclude the proof, it suffices to show that 
    \begin{align*}
        D_n({x}) \coloneqq \int_{\R} e^{- i x t} \left( \psi^{(n)} (t/s_n) - e^{- \frac{t^2}{2}}\right)\d t
    \end{align*}
    converges to zero as $n$ tends to infinity uniformly over $x \in \R$. We first split $D_n$ into two integrals over $\lvert t \rvert \leq r$ and $\rvert t \rvert > r$ for some $r>0$ (to be specified later). By Lindeberg-Feller central limit theorem and dominated convergence theorem ($\norm{\psi_n}_{L^\infty} \leq 1$), the former converges to zero as $n$ tends to infinity uniformly in $x \in \R$. By taking $r$ sufficiently large we can also make 
    $\Big\lvert \int_{\lvert t\rvert > r} e^{- i x t- \frac{t^2}{2}}\d {t} \Big\rvert$ arbitrarily small. 
    
    The remaining part $\int_{\lvert t\rvert > r} e^{- i xt} \psi^{(n)} (t/s_n) \d t$ we split into two integrals $I_1$ and $I_2$ over $r < \lvert t\rvert \leq \delta_n s_n$ and $\lvert t\rvert > \delta_n s_n$, respectively, for some $\delta_n >0$ small (to be chosen later in dependence of $K(n)$). By our assumptions, $\psi_{i,n}' (0) = 0$ and $\psi_{i,n}''(0) = - \sigma_{i,n}^2$ for all $i,n$. Hence, by Taylor's theorem, in the neighbourhood of $0$ ($\lvert t \rvert \leq \delta_n$),
    \begin{align*}
        \psi_{i,n}(t) = 1 -\frac{\sigma_{i,n}^2 t^2}{2} + R_{i,n}(t).
    \end{align*}
    Since $e^{ix} = 1 + ix -\frac{x^2}{2} + R(x)$ with $\lvert R(x)\rvert \leq \min(\lvert x\rvert^3/6, x^2)$, the remainder term $R_{i,n}(t)$ in the above expansion of $\psi_{i,n}$ satisfies
    \begin{align*}
        \lvert R_{i,n}(t)\rvert &\leq \frac{1}{6} \E[\lvert t X_{i,n}\rvert^3 \ind_{\{\lvert X_{i,n}\rvert \leq K\}}] + \E[\lvert t X_{i,n}\rvert^2 \ind_{\{\lvert X_{i,n}\rvert \geq K\}}] \\
        &\leq \frac{K \delta_n}{6} t^2 \sigma_{i,n}^2 + t^2 \E[\lvert X_{i,n}\rvert^2 \ind_{\{\lvert X_{i,n}\rvert \geq K\}}]
    \end{align*}
    for any $K > 0$. In particular, also for $K(n)$ as in the fourth assumption. By choosing $\delta_n = \frac{3}{4 K(n)} > 0$, we obtain that $\lvert \psi_{i,n}(t)\rvert \leq 1 - \frac{3}{8} \sigma_{i,n}^2 t^2 + t^2 \E[\lvert X_{i,n}\rvert^2 \ind_{\{\lvert X_{i,n}\rvert \geq K(n)\}}]\leq \exp(-\frac{3}{8} \sigma_{i,n}^2 t^2 + t^2 \E[\lvert X_{i,n}\rvert^2 \ind_{\{\lvert X_{i,n}\rvert \geq K(n)\}}])$ for all $\lvert t\rvert \leq \delta_n$. Hence,  
    \begin{align*}
        \lvert I_1\rvert &\leq \int_{r < \lvert t\rvert \leq \delta_n s_n} \lvert\psi^{(n)} (t/s_n) \rvert \d t
        \leq \int_{r < \lvert t\rvert \leq \delta_n s_n} \prod_{i=l_*(n)}^n e^{-\frac{3}{8} \frac{\sigma_{i,n}^2}{s_n^2} t^2 + t^2 \frac{\E[\lvert X_{i,n}\rvert^2 \ind_{\{\lvert X_{i,n}\rvert \geq K(n)\}}]}{s_n^2}} \d t\\
        &\leq \int_{r < \lvert t\rvert \leq \delta_n s_n} \exp \Bigg(-t^2 \underbrace{\frac{\sum_{i \geq l_*(n)} \sigma_{i,n}^2}{s_n^2}}_{\geq \delta} \underbrace{\Bigg[\frac{3}{8}  - \frac{\sum_{i \geq l_*(n)}\E[\lvert X_{i,n}\rvert^2 \ind_{\{\lvert X_{i,n}\rvert \geq K(n)\}}]}{\sum_{i \geq l_*(n)} \sigma_{i,n}^2} \Bigg]}_{\geq 1/4} \Bigg) \d t\\
        &\leq \int_{r < \lvert t\rvert} \exp (- t^2 \delta/ 4) \d t
    \end{align*}
    By further increasing $r$, we can make $\lvert I_1\rvert$ arbitrarily small. 
    
    We now turn to the remaining $I_2$. Let $\Mm_{i,n}$ be the median of $f_{i,n}(X_{i,n})$, i.e., $\P[f_{i,n}(X_{i,n}) \geq \Mm_{i,n}] \geq 1/2$ and $\P[f_{i,n}(X_{i,n}) \leq \Mm_{i,n}] \geq 1/2$. 
    By \cite[Theorem 2]{BCG12}, there exist two absolute constants $c_1, c_2 > 0$ such that 
   \begin{align*}
       \lvert \psi_{i,n}(t)\rvert < \begin{cases}
           1 - c_1/(\Mm_{i,n}^2 \sigma_{i,n}^2), & \lvert t\rvert \geq {\pi}/(4 \sigma_{i,n});\\
           1 - c_2 t^2/\Mm_{i,n}^2, & 0 < \lvert t\rvert < {\pi}/({4 \sigma_{i,n}}).
       \end{cases}
   \end{align*}
   Since $f_{i,n}(X_{i,n})$ is non-negative, Markov's inequality implies that for any $\lambda > 0$,
   \begin{align*}
       \P[f_{i,n}(X_{i,n}) \geq \lambda] \leq \frac{\E[f_{i,n}(X_{i,n})^{r-1}]}{\lambda^{r-1}} = \frac{\int f_{i,n}^{r}(x) \d x}{\lambda^{r-1}} \leq \frac{M^r}{\lambda^{r-1}},
   \end{align*}
   which directly implies that $0< \Mm_{i,n} \leq (2 M^r)^{1/(r-1)} \eqqcolon C^{1/2}$. Thus, for all $i \leq n, n \in \N$:
   \begin{align*}
       \lvert \psi_{i,n}(t)\rvert < \begin{cases}
           1 - c_1/(C \sigma_{i,n}^2), & \lvert t\rvert \geq {\pi}/(4 \sigma_{i,n});\\
           1 - c_2 \delta_n^2/C, & \delta_n < \lvert t\rvert < {\pi}/({4 \sigma_{i,n}});
       \end{cases}
   \end{align*}
   and $\sup_{i \geq l^*(n)} \lvert \psi_{i,n}(t)\rvert < 1 - \min(c_1/(C \sigma_{l^*,n}^2), c_2 \delta_n^2/C) \leq e^{-c' \min(\sigma_{l^*,n}^{-2}, \delta_n^2)}$ for some $c'>0$ and all $\lvert t \rvert \geq \delta_n$. 
   Thus, for any fixed integer $2\lceil \frac{r}{r-1}\rceil > q > \frac{r}{r-1}$,
   \begin{align*}
       \lvert I_2\rvert &\leq s_n \int_{\lvert t\rvert > \delta_n} \prod_{i=1}^n \lvert \psi_{i,n}(t) \rvert \d t 
       \leq e^{-(n-l^*) c' \min(\sigma_{l^*,n}^{-2}, \delta_n^2)} s_n \int_{\lvert t\rvert > \delta_n} \prod_{i=1}^q \lvert \psi_{i,n}(t) \rvert \d t \\
       &\leq e^{-(n-l^*) c' \min(\sigma_{l^*,n}^{-2}, \delta_n^2)} s_n \prod_{i=1}^q \norm{\psi_{i,n}}_{L^q}
       \leq e^{-(n-l^*) c' \min(\sigma_{l^*,n}^{-2}, \delta_n^2)} s_n \prod_{i=1}^q (2\pi)^{1/q} \norm{f_{i,n}}_{L^{q/(q-1)}} \\
       &\leq e^{-(n-l^*) c' \min(\sigma_{l^*,n}^{-2}, \delta_n^2)} s_n 2 \pi \prod_{i=1}^q \Big[ \norm{f_{i,n}}_{L^r}^{r/(q(r-1))} \norm{f_{i,n}}_{L^1}^{1-r/(q(r-1))}\Big]\\
       &\leq 2 \pi e^{-(n-l^*) c' \min(\sigma_{l^*,n}^{-2}, \delta_n^2)} s_n M^{r/(r-1)} \xrightarrow[\text{unif. in }x]{n \rightarrow \infty} 0,
   \end{align*}
   where we used H\"older's and Hausdorff-Young inequalities and the third assumption ($\delta_n = c/K(n)$).   
\end{proof}


\bibliographystyle{alpha}
\bibliography{references}

\end{document}